\documentclass[a4paper,10pt]{article}

\usepackage[utf8]{inputenc}
\usepackage[T1]{fontenc}

\usepackage[sort,nocompress]{cite}
\usepackage{amsmath,amsfonts,amssymb,amsthm,cases,graphicx,authblk}
\usepackage{mathrsfs}
\usepackage{tikz}
\usepackage{tikz-3dplot}
\usepackage{pgfplots}
\usepackage{hyperref}
\hypersetup{colorlinks=true,linkcolor=teal,citecolor=blue,filecolor=teal,urlcolor=blue}
\usepackage{color}
\usepackage{vmargin}
\usepackage{nccmath}
\usepackage{verbatim}
\usepackage{float}
\usepackage{url}
\usepackage[capitalise]{cleveref}
\usepackage{fancyhdr}

\setmarginsrb{2.3cm}{1cm}{2.3cm}{1.5cm}{1cm}{1cm}{1cm}{1cm}

\newcommand{\Cof}{\operatorname{Cof}}

\renewcommand{\leq}{\leqslant}

\renewcommand{\geq}{\geqslant}

\everymath{\displaystyle}
\numberwithin{equation}{section}

\newtheorem{Theorem}{Theorem}[section]

\newtheorem{Proposition}[Theorem]{Proposition}
\newtheorem{Lemma}[Theorem]{Lemma}
\newtheorem{Remark}[Theorem]{Remark}

\newcounter{epsi} 
 
\title{Maximal regularity for a compressible fluid-structure interaction system with Navier-slip boundary conditions}

\author[1]{Kuntal Bhandari \thanks{E-mail Id: \href{mailto: bhandari@math.cas.cz}{bhandari@math.cas.cz}.}}
\author[2]{Imene Aicha Djebour
	\thanks{E-mail Id: \href{mailto: imene.djebour@cyu.fr}{imene.djebour@cyu.fr}.}}

\author[1]{\v{S}\'{a}rka Ne\v{c}asov\'{a} \thanks{E-mail Id: \href{mailto: matus@math.cas.cz}{matus@math.cas.cz} (corresponding author).}}
\affil[1]{Institute of Mathematics, Czech Academy of Sciences, \v{Z}itn\'{a} 25, 11000 Prague 1, Czechia}
\affil[2]{Department of Mathematics, CY Cergy Paris University, CNRS UMR 8088, 95302 Cergy-Pontoise, France}

\date{\today}
\pgfplotsset{width=7cm,compat=1.8}

\date{}

\begin{document}

\maketitle

\begin{abstract}
We investigate a fluid-structure interaction system in which the dynamics of the fluid  is described by the compressible Navier-Stokes equations, while the elastic structure is modeled by a damped plate equation. The fluid evolves in a 
three-dimensional bounded domain, with the structure occupies a part of its boundary. Instead of standard no-slip boundary conditions, we consider the Navier-slip boundary conditions  at the fluid-structure interface as well as  at the fixed boundary.  We establish the  local-in-time existence and uniqueness of strong solutions within  $L^{p}-L^{q}$ framework. The existence result is obtained for small time by decoupling the linearized system and employing a cascade strategy combined with the Tikhonov fixed point theorem, whereas the  uniqueness is shown by deriving weak regularity properties for the associated linear coupled operator in a Hilbert space setting.  It is the first result addressing strong solutions for a compressible fluid interacting with a damped plate under {\em Navier-slip} boundary conditions.
\end{abstract}

\vspace{1cm}
	
\noindent {\bf Keywords:} Fluid-structure interactions, compressible Navier-Stokes system, damped plate equation, strong solution, Navier-slip boundary conditions.
	
\vspace{.2cm}	
\noindent {\bf 2020 Mathematics Subject Classification.} 35Q30, 35R37, 76N10.

\tableofcontents
\vskip 10mm

\section{Introduction}

\subsection{Problem statement}
In this article, we consider a 
fluid-structure interaction problem in which the fluid flow is modeled by the compressible barotropic Navier-Stokes equations in a three-dimensional bounded domain, where part of the boundary deforms according to a damped plate equation. To be precise, 
let $\Omega_0 \subset \mathbb{R}^3$ be a bounded domain with smooth boundary that contains a flat portion denoted by
\[
\Gamma_{\rm flat} = \omega \times \{0\},
\]
where $\omega \subset \mathbb{R}^2$ is a smooth and bounded domain. The boundary of $\Omega_0$ is thus decomposed as
\[
\partial \Omega_0 = \Gamma_0 \cup \Gamma_{\rm flat}.
\]
The part $\Gamma_0$ is assumed to be rigid, while $\Gamma_{\rm flat}$ supports an elastic plate undergoing transversal deformations described by a scalar displacement $\eta$.
The corresponding deformed boundary is denoted by $\Gamma(\eta)$ and is defined as
\[
\Gamma(\eta)
=
\left\{
(x_1,x_2,x_3)\in \mathbb{R}^3
\;\middle|\;
(x_1,x_2)\in \omega,
\quad
x_3=\eta(x_1,x_2)
\right\}.
\]
We assume that a compressible fluid occupies the time-dependent domain $\Omega(\eta)$ enclosed by the rigid boundary $\Gamma_0$ and the moving interface $\Gamma(\eta)$. Moreover, we impose the non-contact condition
\[
\Gamma(\eta) \cap \Gamma_0 = \emptyset,
\]
ensuring that the elastic plate does not intersect the rigid part of the boundary.
	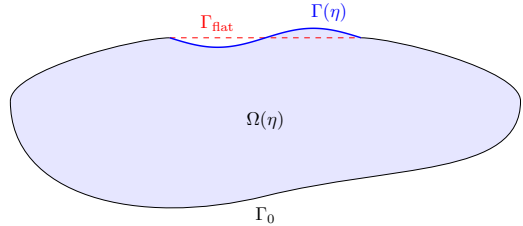
\begin{figure}[h] 
	\centering
	\scalebox{0.7}{%
\begin{tikzpicture}[scale=1.2]
	
	\fill[blue!10]
	(-4,0)
	.. controls (-4,-1.5) and (-2,-2) .. (0,-1.5)
	.. controls (2,-1) and (4,-1.2) .. (4,0)
	.. controls (4,0.5) and (2,1) .. (1.5,1)
	--
	plot[smooth, samples=100, domain=1.5:-1.5]
	(\x, {1 + 0.15*sin(180*\x/1.5)})
	--
	(-1.5,1)
	.. controls (-2,1) and (-4,0.5) .. (-4,0)
	-- cycle;
	
	\draw[thin]
	(-4,0)
	.. controls (-4,-1.5) and (-2,-2) .. (0,-1.5)
	.. controls (2,-1) and (4,-1.2) .. (4,0)
	.. controls (4,0.5) and (2,1) .. (1.5,1);
	
	\draw[thin]
	(-1.5,1)
	.. controls (-2,1) and (-4,0.5) .. (-4,0);
	
	\draw[red, dashed] (-1.5,1) -- (1.5,1);
	\node[red, above] at (-0.75,1) {$\Gamma_{\rm flat}$};
	
	\draw[blue, thick, smooth, samples=100, domain=-1.5:1.5]
	plot (\x, {1 + 0.15*sin(180*\x/1.5)});
	
	\node[blue, above] at (1,1.15) {$\Gamma(\eta)$};
	
	\node at (0,-0.3) {$\Omega(\eta)$};
	\node at (0,-1.8) {$\Gamma_0$};
	
\end{tikzpicture}

}
\caption{The fluid domain associated to the plate displacement $\eta$.}
\end{figure}
Then, the fluid-structure interaction system we want to study  is the following 
\begin{equation}
		\label{fs}
		\left\{
		\begin{array}{cc}
			\partial_t \widetilde{\rho} + \nabla\cdot(\widetilde{\rho} \widetilde{v})= 0 &      \text{ in } \, \Omega(\eta(t)),  \ t > 0 ,  \\
			\widetilde{\rho} (\partial_t\widetilde{v}+(\widetilde{v}\cdot \nabla)\widetilde{v})- \nabla\cdot(\sigma(\widetilde{v},\widetilde{p}))=0 &  \text{ in }  \,  \Omega(\eta(t)), \ t>0 ,\\
			\partial_{tt}\eta + \Delta^2 \eta -\Delta\partial_t\eta= \mathbb{H}_\eta(\widetilde{v},\widetilde{p}) & \text{ in } \,  \omega, \ t>0  . \\
		\end{array}
		\right.
\end{equation}
The function $\widetilde{\rho}$ represents the density of the fluid while $\widetilde{v}$ denotes the fluid velocity and $\widetilde{p}$ is the fluid pressure. The Cauchy stress tensor is defined by 
	\begin{equation*}
		\sigma(\widetilde{v},\widetilde{p}) = \mathbb{S}(\widetilde{v}) - \widetilde{p} \mathbb{I}_3,
	\end{equation*}
	with 
	\begin{equation}\label{Viscous-stress}
		\mathbb{S}(\widetilde{v}) = 2\mu \mathbb{D}(\widetilde{v}) + \lambda( \nabla\cdot \widetilde{v} )\mathbb{I}_3, \quad \mathbb{D}(\widetilde{v}) = \frac{1}{2}\left(\nabla \widetilde{v} + (\nabla \widetilde{v})^\top\right), 
\end{equation}
	where $\mu$ and $\lambda$ are the dynamic and the bulk viscosities of the fluid that verify
\begin{equation}
\label{1635}
\frac{2}{3}\mu+\lambda > 0,\quad \mu>0.
\end{equation}
	The pressure $\widetilde{p}=\widetilde{p}(\widetilde{\rho})$ that is given by the following constitutive law:
	$$
	p(\widetilde{\rho})=\widetilde{\rho}^\gamma, \quad  \gamma \geq 1.
	$$ 
The positive constants $\alpha$ and $\delta$ in the structure equation correspond respectively to the rigidity and the damping on the structure. The force of the fluid that acts on the moving interface is defined by
	$$
	\mathbb{H}_\eta(\widetilde{v},\widetilde{p})=- \sqrt{1+|\nabla_s\eta|^2}\sigma(\widetilde{v},\widetilde{p})n_\eta \cdot e_3, 
	$$
where we denote by  $n_\eta$ the unit outer normal vector on $\partial\Omega(\eta)$ that is defined by
$$
n_{\eta}= \frac{1}{\sqrt{1+|\nabla_s\eta|^2}}
\begin{pmatrix} 
	-\nabla_s\eta \\ 
	1 
\end{pmatrix}\ \text{ on } \Gamma(\eta),\quad n_\eta=-e_3 \text{ on } \Gamma_{0}. 
$$
Here the variable $s$ is the tangential variable $s=(x_1,x_2) \in \omega$.
We assume that the elastic plate is clamped along its boundary. More precisely, the displacement $\eta$ satisfies the clamped boundary conditions
\begin{equation}
	\label{clamped}
	\eta = 0, \ \ \nabla_s \eta \cdot n_\omega = 0
	\qquad \text{on } \partial \omega,\ t>0,
\end{equation}
where $n_\omega$ denotes the outward unit normal to $\partial \omega$. To introduce the boundary conditions, let us define the normal and the tangential components of any $z\in \mathbb{R}^3$ as follows
$$
z_{n_\eta}=(z\cdot n_\eta)\cdot n_\eta,\quad z_{\tau_\eta}=(\mathbb{I}_3 - n_\eta \otimes n_\eta)z.
$$
The fluid is subject to Navier-slip boundary conditions that read 
\begin{equation}
\label{bc}
\left\{
\begin{array}{cc}
\widetilde{v}_{n_{\eta}}=0,\quad \left[2\mu\mathbb{D}(\widetilde{v})n_\eta +\beta_0 \widetilde{v}\right]_{\tau_\eta} = 0 & \text{ on } \, \Gamma_0,  \ t>0 , \\
(\widetilde{v}-\partial_t \eta e_3 )_{n_{\eta}}=0,\quad \left[2\mu\mathbb{D}(\widetilde{v})n_\eta +\beta_1 (\widetilde{v}-\partial_t\eta e_3)\right]_{\tau_\eta} = 0 & \text{ on } \, \Gamma(\eta(t)), \ t>0 ,
\end{array}
\right. 
\end{equation}
where $\beta_0, \beta_1$ are the friction coefficients.
The Navier-slip boundary condition was proposed by Navier in 1823 \cite{Navier1823memoire} and is relevant in several physical
contexts, see for instance \cite{kistler1984coating,Liakos,Verfurth}.   This Navier law states that, in one hand, 
the normal component of the velocity is continuous at the fluid-structure interface (impermeability condition) and that, on the other hand, the amount of slip in the tangential part
of the velocity is proportional to the tangential part of the normal stress exerted by the
fluid on the boundary. 
 For many years, the \emph{no-slip boundary condition}
has been the most widely used given its success in reproducing the
standard velocity profiles for incompressible/compressible viscous fluids.
Although the no-slip hypothesis seems to be in good agreement with
experiments, it leads to certain rather surprising conclusions. The
most striking one being the absence of collisions of rigid objects
immersed in a linearly viscous fluid \cite{HES,HIL}.
In the case of compressible fluid, the third author with her collaborators \cite{BNOR} proved that even in the case of no-slip boundary conditions, the  collision can occur exploiting the roughness of the body.
%
 Navier-slip boundary conditions provide a more realistic framework for describing fluid-structure interactions, especially near contact, and resolve the  “no-collision”  paradox encountered with classical no-slip conditions \cite{zbMATH03189127, zbMATH03278246, MR2481302}.  In fact, the authors in \cite{MR3281946}  studied the free fall of a sphere toward a wall in a viscous incompressible fluid, showing that while no-slip boundary conditions prevent finite-time contact, slip boundary conditions allow collision to occur as we mention before.

To complete the model, we consider  the following initial conditions:
\begin{equation}
	\label{ic}
	\left\{
	\begin{array}{c}
	\widetilde{\rho}(0,\cdot)=\widetilde{\rho}^0,\quad \widetilde{v}(0,\cdot)= \widetilde{v}^0 \  \ \ \  \ \ \text{ in } \Omega(\eta^{0}), \\
	\eta(0,\cdot)=\eta^0,\quad \partial_t \eta(0,\cdot)=\eta^1 \ \  \text{ in } \omega.
		\end{array}
	\right.
\end{equation}
Then, our goal  is to prove
the local-in-time existence of strong solution to  the  compressible fluid-structure interaction  system \eqref{fs}--\eqref{clamped} with  Navier-slip boundary conditions \eqref{bc} and initial data \eqref{ic}. 

\subsection{Bibliographic comments}

The  fluid-structure interaction problems have  been  analyzed  in various works.  Let us begin by  mentioning  the survey paper \cite{Quarteroni2000comp}, where the authors describe some models for blood flow in arteries as a fluid-structure interaction system. More specifically, they consider a simplified configuration in which the fluid is described by the incompressible Navier-Stokes equations and the structure is modeled by a damped beam equation. From the perspective of mathematical analysis, the existence of weak solutions to the corresponding model was investigated in \cite{Chambolle-Grandmont}, whereas the existence of strong solutions was proved in \cite{DaVeiga}.

The existence of weak solutions has also been established for more complex models. For instance, the interaction between an incompressible Navier-Stokes fluid and a linear elastic Koiter shell was studied in \cite{Lengeler,Lengeler-Ruzicka}. We also mention \cite{Muha-Canic}, where the authors consider dynamic pressure boundary conditions and model the structure either as a linear viscoelastic beam or as a linear elastic Koiter shell. These results were later extended in \cite{Muha-Canic-2,Muha-Canic-3} to a three-dimensional cylindrical domain, where the structure is described by an elastic cylindrical Koiter shell with not necessarily radially symmetric displacements.

Let us  briefly review some results concerning the existence of strong solutions for incompressible fluid-structure interaction models. In \cite{Casanova}, the author studies a coupled system consisting of the incompressible Navier-Stokes equations and a damped Euler-Bernoulli beam equation, with boundary conditions involving the pressure. The work \cite{zbMATH07093623} addresses the interaction between  incompressible Navier-Stokes equations and  damped plate equation under Navier boundary conditions. In \cite{Debayan-Arnab-Raymond}, the authors establish the existence and uniqueness of maximal strong solutions for a model coupling the incompressible Navier-Stokes equations with a nonlinear shell structure. The paper \cite{Grandmont-Hillairet-ARMA} is devoted to the 
global-in-time existence and uniqueness of solutions for a damped beam model and, in particular, investigates the possibility of contact between the structure and the bottom of the domain.

We also mention \cite{Saal-Denk,Maity-Takahasi-JMFM}, where strong solutions are obtained within an $L^p-L^q$ 
 framework rather than the classical Hilbert space setting. Furthermore, in \cite{Lequeurre-JMFM}, the author proves the existence and uniqueness of strong solutions in the case of a damped wave equation. In \cite{Grandmont-Hillairet-Lequeurre}, three types of structures are considered: a damped beam equation, an undamped wave equation, and a beam equation with an inertia term. 
 Finally, the case of an undamped beam equation is studied in \cite{Badra-Takahasi-Funk-Ekvac,Badra-takahasi-SIMA-Gevrey} using the framework of Gevrey semigroups.


We now turn to the case of compressible fluids interacting with elastic structures through the boundary of the fluid domain. The global existence of weak solutions, up to the time when the structure comes into contact with the boundary, was established in \cite{zbMATH06860758, zbMATH01558334}. The local-in-time existence of strong solutions in the corresponding 2D/1D case  was obtained in \cite{zbMATH07273309}, and in \cite{zbMATH07339646} when the structure is governed by a wave equation. We refer to \cite{Srdan-CPDE,Kaousek-JMPA,bhandari2025weaksolutionscompressiblemagnetohydrodynamic,Trifunovic-Wang} for results on the existence of weak solutions for compressible fluids interacting with elastic structures. Finally, it is worth mentioning that the paper \cite{Breit-Malte-Sebastien} concerns with the existence of weak solutions for a compressible fluid interacting with a visco-elastic bulk solid, until the time of a collision.






In the $L^p-L^q$ framework, only a few results are available in the compressible setting. In \cite{zbMATH07391465}, the authors established local-in-time existence and global well-posedness for the compressible Navier-Stokes system in a general moving domain framework under 
no-slip boundary conditions. On the other hand, for a fluid-beam interaction problem with no-slip boundary conditions, maximal regularity results were obtained in \cite{zbMATH07347904,KMNPW-L}. In that work, the authors employ a mixed change of variables combining Lagrangian and geometric transformations, together with a Banach fixed-point argument. The Lagrangian transformation plays a crucial role in handling the transport equation and, importantly, does not alter the boundary coordinates under no-slip boundary conditions. Last but not the least, we cite the work \cite{Hieber-Murata} where the authors addressed the local-in-time unique  strong solution for a compressible fluid-rigid body interaction problem within the $L^p$ setting.  

\subsection{Main features  of the work}

In the present work, we consider 
Navier-slip boundary conditions to investigate the existence of strong solutions for a compressible 
fluid-structure interaction problems. To the best of our knowledge, this is the first result addressing strong solutions for a compressible fluid interacting with a damped plate under Navier-slip boundary conditions.

In contrast to the no-slip case, Navier-slip boundary conditions lead to additional difficulties, as the  powerful Lagrangian change of variables modifies the boundary geometry and thus perturbs the boundary conditions, even in the absence of structural coupling (see, e.g., \cite{zbMATH06298655}). To overcome this issue, we employ a {\em geometric change of variables} introduced in \cite{zbMATH06693017}.  This transformation allows us to handle the transport equation while preserving the normal continuity condition on the entire boundary. Moreover, since the transformation coincides with the identity on the fixed part $\Gamma_{0}$, the slip condition is preserved there, and the nonlinear effects arise only on the moving boundary.   

\vspace*{0.2cm}

The main difficulties and novelties of this work can be summarized as follows:
\begin{itemize}
	\item As mentioned above, the Lagrangian change of variables, commonly denoted by $X$, is the most natural and effective choice in the compressible setting. However, in  presence of Navier-slip boundary conditions—unlike the no-slip case—no explicit representation of the boundary coordinates $X(y)$ is available. {\em This lack of boundary control constitutes a major obstruction to the direct application of the Banach fixed-point theorem.} 
    
	\item To address this difficulty, we use a {\em geometric change of variables}, originally introduced in \cite{zbMATH06693017} and described in detail in \cref{chV} via \eqref{lag}, along with the corresponding change of unknowns \eqref{1807}. 
    This transformation preserves the impermeability boundary condition and allows for an effective treatment of the transport equation. Although it is classically used in incompressible settings, it turns out to be particularly well suited to our compressible framework.

	\item This change of variables impacts the entire coupled system, including the slip boundary conditions. The resulting nonlinear terms in the boundary conditions are treated in the negative Sobolev space $\mathscr W^{-1,q}$, defined  in \cref{not}  by \eqref{1213}. This functional setting is essential in order to avoid the use of nonlocal operators acting on the time derivative; see for instance \cite{zbMATH06298655,zbMATH06539947}.

	\item To establish the existence of strong solutions, we employ the Tikhonov fixed-point theorem. However, a limitation of this theorem is that it does not guarantee uniqueness of the solution. To address this issue, we work with a weaker norm in a Hilbert space setting and derive parabolic regularity estimates for a perturbed and coupled fluid-structure operator (described in Lemma \ref{1821}). We got the inspiration of using the {\em weaker norm} for proving the uniqueness from the work \cite{zbMATH05674979}, where the author considered the  compressible Navier-Stokes equations (without structural interaction) under homogeneous Dirichlet boundary conditions and in such situation, he needs the weaker norm in $L^p-L^q$ setting.
  In our case, we need to deal with structure on the boundary (producing a moving domain framework) which gives additional difficulties in the whole system.

    
\end{itemize}

\subsection{Notations}
\label{not}

Let $d\in \mathbb{N}^*$. For any Banach space $E$, its norm will be denoted by $\|\cdot\|_{E}$, while $E^d$ stands for the $d$-product of $E$ and its norm will be denoted by $\|\cdot\|_{E^d}$. For any domain $\Omega$, $L^p(\Omega)$ and $W^{m,p}(\Omega)$ are the classical Lebesgue and Sobolev spaces with $1\leq p\leq \infty$, $m\in \mathbb{N}$. For any $T>0$, we define also the spaces of type 
$L^p(0,T;E^d)$, $W^{m,p}(0,T;E^d)$,
as the usual Lebesgue and Sobolev spaces with $Y$ valued functions defined on $(0,T)$. In the sequel, their norms will be denoted respectively by $\|\cdot\|_{L^p(0,T;E^d)}$, $\|\cdot\|_{W^{m,p}(0,T;E^d)}$. For $p\geq 1$, we denote by $p'$ its dual exponent. If $p=2$, we will use the classical notation $H^m=W^{m,2}$. Let $\ell,m\in\mathbb{N}$ such that $\ell<m$.  For any $1\leq p,q<\infty$, we recall the classical definition of the Besov spaces by the interpolation 
$$
B^s_{qp}(\Omega)=(W^{\ell,q}(\Omega),W^{m,q}(\Omega))_{\theta p},\quad s=\theta \ell+(1-\theta)m,\quad \theta\in(0,1). 
$$ 
For more details about Besov spaces, we refer to \cite{zbMATH05779185}. Let us also recall the Fourier transform of a $L^1(\mathbb{R}^3)$ function and its inverse
$$
\mathcal{F}[u](\xi)=\int_{\mathbb{R}^3}e^{-ix\xi} u(x)\ dx,\quad \mathcal{F}^{-1}[u](x)=\frac{1}{(2\pi)^3}\int_{\mathbb{R}^3}e^{ix\xi} u(\xi)\ d\xi.
$$
We define the operator $(\mathbb{I}-\Delta)^{-1/2}$ as follows
$$
(\mathbb{I}-\Delta)^{-1/2} u=\mathcal{F}^{-1}[(1+|\xi|^2)^{1/4}\mathcal{F}[u](\xi)].
$$
Let $\Omega \subset \mathbb R^3$ and $\imath: L^1_{loc}(\Omega)\rightarrow L^1_{loc}(\mathbb{R}^3)$ be the extension operator that satisfies the following properties for any $q\in (1,\infty)$ (see, e.g., \cite{zbMATH06496036}),
$$
\forall u\in W^{1,q}(\Omega),\quad  \imath u=u \text{ in }\Omega,
$$
$$
\forall u\in W^{1,q}(\Omega),\quad \|\imath u\|_{W^{k	,q}(\mathbb{R}^3)}\leq C\|u\|_{W^{k,q}(\Omega)},\quad k=0,1,
$$
and 
$$
\forall u\in W^{1,q}(\Omega),\quad \|(\mathbb{I}-\Delta)^{-1/2}\imath \nabla u\|_{L^q(\mathbb{R}^3)}\leq  \|u\|_{L^q(\Omega)},\quad k=0,1.
$$
We further  define $\mathscr{W}^{-1,q}(\Omega)$ as 
\begin{equation}
\label{1213}
\mathscr{W}^{-1,q}(\Omega)=\{ u\in L^1_{loc}(\Omega)\; |\; (\mathbb{I}-\Delta)^{-1/2}\imath u\in L^q(\Omega)\},
\end{equation}
endowed with the norm
$$
\|u\|_{\mathscr{W}^{-1,q}(\Omega)}=\|(\mathbb{I}-\Delta)^{-1/2}\imath u\|_{L^q(\Omega)}. 
$$


\subsection{Main result} 
Let us now state the main result of our work. 
\begin{Theorem}[Existence and uniqueness]
\label{Thm-main}
Let $p>2$, $q>3$ with $\frac{1}{p}+\frac{1}{2q}<\frac{1}{2}$ and suppose that the  initial data satisfy:
\begin{equation}
\label{initialD-Main}
\begin{array}{c}
\widetilde \rho^0\in W^{2,q}(\Omega(\eta^0)),\quad \min_{\overline{\Omega(\eta^0)}}\widetilde \rho^0>0,\\ \vspace*{0,1cm}
\eta^0\in B_{qp}^{2(2-1/p)}(\omega),\quad \min_{\overline{\omega}}\left( \eta^0+1\right) >0,\\ \vspace*{0,1cm}
\widetilde v^0\in [B_{qp}^{2(1-1/p)}(\Omega(\eta^0))]^3, \quad \eta^1\in B_{qp}^{2(1-1/p)}(\omega),\\ \vspace*{0,1cm}
\widetilde v^0_{n_{\eta^0}}=0\quad\text{  on } \Gamma_{0},\quad (\widetilde v^0-\eta^1e_3)_{n_{\eta^0}}=0 \quad \text{  on } \Gamma(\eta^0),
\end{array}
\end{equation}
Then there exists  $T>0$ such that the system \eqref{fs}--\eqref{ic} admits a unique solution in the time interval $(0,T)$ with
$$
\widetilde \rho\in L^\infty(0,T;W^{1,q}(\Omega(\eta)))\cap W^{1,p}(0,T;L^q(\Omega(\eta))),
$$
$$
\widetilde v\in L^p(0,T;[W^{2,q}(\Omega(\eta))]^3)\cap C([0,T]; [B_{pq}^{2(1-1/p)}(\Omega(\eta))]^3)\cap W^{1,p}(0,T;[L^q(\Omega(\eta))]^3),
$$
$$
\eta\in L^p(0,T;W^{4,q}(\omega))\cap C([0,T];B_{qp}^{2(2-1/p)}(\omega))\cap W^{2,p}(0,T;L^q(\omega)),
$$
$$
\partial_t\eta\in L^p(0,T;W^{2,q}(\omega))\cap C([0,T];B_{qp}^{2(1-1/p)}(\omega))\cap W^{1,p}(0,T;L^q(\omega)).
$$
\end{Theorem}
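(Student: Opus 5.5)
The plan follows the strategy announced in the introduction. First, transform \eqref{fs}--\eqref{ic} to the fixed reference domain $\Omega_0$ with the geometric change of variables of \cite{zbMATH06693017}. Concretely, set $X_\eta(t,y) = y + \chi(y)\,\eta(t,y_1,y_2)\, e_3$, with $\chi$ a cutoff equal to $1$ near $\Gamma_{\rm flat}$ and vanishing near $\Gamma_0$; then $X_\eta$ maps $\Omega_0$ onto $\Omega(\eta(t))$. Pull back the unknowns by $\rho=\widetilde\rho\circ X_\eta$ and $v = \Cof(\nabla X_\eta)^\top \widetilde v\circ X_\eta$ (a Piola-type transform). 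This choice preserves the impermeability conditions, $v\cdot e_3=\partial_t\eta$ on $\Gamma_{\rm flat}$ and $v\cdot n=0$ on $\Gamma_0$, and it leaves the slip condition unchanged on $\Gamma_0$ because $X_\eta=\mathrm{Id}$ there. The transformed system is a perturbed linear problem: a transport equation for $\rho$ with transport velocity $v - \partial_t X_\eta$ (tangential on $\partial\Omega_0$, so no inflow boundary condition is needed), a Lamé system with Navier-slip boundary conditions coupled to the damped plate via $\partial_t\eta$ and the normal stress, and nonlinear remainders $F,G,H$ that vanish quadratically near $(\rho^0,0,\eta^0)$. The nonlinear tangential boundary terms are placed in $\mathscr W^{-1,q}$ as in \eqref{1213}, which avoids nonlocal time operators.

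Next comes the linear theory. The coupled Lamé--plate system with Navier-slip conditions, frozen coefficient $\rho^0$ and damped plate $\partial_{tt}\eta+\Delta^2\eta-\Delta\partial_t\eta$, should generate an analytic semigroup with $L^p$-maximal regularity on $[L^q(\Omega_0)]^3\times W^{2,q}\cap W^{1,q}_0(\omega)\times L^q(\omega)$. I would prove this by decoupling: treat the coupling terms $\partial_t\eta e_3$ (boundary datum) and $\sigma n\cdot e_3$ (plate forcing) as perturbations and run a cascade. The plate is parabolic thanks to the damping, and the fluid normal trace is of lower order relative to the plate's maximal regularity, so for small $T$ a Neumann-series argument closes. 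The pressure $p(\rho)$ enters only as a lower-order forcing $\nabla\rho$ once $\rho\in L^\infty W^{1,q}$. For the transport equation I use the standard estimate: since $v-\partial_tX_\eta\in L^pW^{2,q}\subset L^1W^{1,\infty}$ (here $q>3$), Gronwall gives $\rho\in L^\infty W^{1,q}\cap W^{1,p}L^q$ and positivity of $\rho$ on a short interval. The condition $\frac1p+\frac1{2q}<\frac12$ ensures embeddings such as $B^{2(1-1/p)}_{qp}\hookrightarrow W^{1,\infty}$ and $B^{2(2-1/p)}_{qp}(\omega)\hookrightarrow C^{1}$ (in fact $W^{3,\infty}$-type control near the required order), and these control the coefficients of $X_\eta$ and keep $\min(\eta+1)>0$, i.e.\ non-contact.

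For existence I apply the Tikhonov fixed-point theorem rather than Banach's. Define $\Phi$ on a convex set $\mathcal K_{R,T}$ of $(\hat\rho,\hat v,\hat\eta)$ bounded in the maximal-regularity norms with the prescribed initial data. The map solves the transport equation with velocity $\hat v$, then the linear fluid--plate problem with right-hand sides evaluated at the hat variables. The a priori bounds, with constants independent of $T\le T_0$ and small powers $T^\kappa$ coming from time integration of the lower-order terms, give $\Phi(\mathcal K)\subset\mathcal K$ for $R$ large and $T$ small. Compactness of $\mathcal K$ in a weaker topology (Aubin--Lions) and continuity of $\Phi$ in that topology yield a fixed point. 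The reason for avoiding Banach is the loss of derivative for $\rho$ in the transport step, which prevents a contraction in the strong norm.

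The main obstacle is uniqueness. For two solutions, subtract the equations written in one common reference frame and estimate the differences $(\delta\rho,\delta v,\delta\eta)$ in a weaker Hilbert setting: $\delta\rho\in L^\infty L^2$, and $\delta v,\delta\eta$ in the energy-level parabolic class of the coupled operator ($L^2H^1\cap H^1H^{-1}$-type for $v$). The key step is the parabolic regularity estimate for the perturbed coupled fluid--structure operator in this weak norm. I would get it from the energy identity of the linearized operator, testing with $(\delta v,\partial_t\delta\eta)$ and using that the slip terms give nonnegative boundary contributions. Terms like $\delta\rho\,\partial_t v$ and $\nabla\delta\rho$ are then handled by an $L^2$ transport estimate for $\delta\rho$, with $\nabla\delta\rho$ moved onto $\delta v$ by integration by parts, which is possible because the pressure difference enters in divergence form. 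A Gronwall argument on the sum then gives zero differences. The difficulty is that the two solutions live on different moving domains, so the change of variables introduces difference terms $\eta_1-\eta_2$ in the coefficients. These must be bounded by the weak norm of $\delta\eta$ using the strong regularity of the solutions.
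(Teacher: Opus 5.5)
Your overall architecture is the paper's. It uses a geometric (non-Lagrangian) change of variables with the Piola-type transform $v=b_X^\top\widetilde v\circ X$, a decoupled linear theory, and a compactness-type fixed point (Tikhonov in the paper) instead of Banach. Uniqueness goes through an energy estimate in the Hilbert norm $\|(\cdot,\cdot)\|_{\mathcal X_{weak}}$ together with an $L^\infty(0,T;L^2)$ transport bound for the density difference.

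\textbf{The gap: your reference configuration.} You take $\Omega_0$ as reference and $X_\eta=y+\chi(y)\eta e_3$. Then $X_{\eta(0)}=X_{\eta^0}\neq\mathrm{Id}$, so the principal-order remainders do not become small as $T\to0$. These remainders are $\rho^0\partial_t v-\mathbb T_X$ (which contains $\rho^0(\mathbb I_3-\nabla X)\partial_t v$), $\nabla\cdot(\mathbb S_X(v)-\mathbb S(v))$, and the $\nabla v$-part of $\mathbb K_X$. Their coefficients tend to $b_{X_{\eta^0}}-\mathbb I_3\neq0$, so they are of size $C\|\eta^0\|\,\|v\|_{\mathcal X_v}$, not $C_RT^\kappa$. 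Hence $\Lambda(\mathcal B_{R,T})\subset\mathcal B_{R,T}$ cannot be obtained unless $\eta^0$ is small. Likewise, invertibility of $X_\eta$, with $\det\nabla X_\eta=1+\partial_3\chi\,\eta$, becomes a condition on $\eta$ itself, which short time does not give.

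Your claim that the remainders vanish near $(\rho^0,0,\eta^0)$ is inconsistent with taking the standard Lamé system with Navier-slip conditions on $\Omega_0$ as principal part. The linearization at $\eta^0$ on $\Omega_0$ is a variable-coefficient operator with transformed slip conditions, whose maximal regularity (coupled to the plate) you would have to prove separately. The theorem allows arbitrary $\eta^0$ with $\min(\eta^0+1)>0$. The paper avoids the problem by taking $\Omega(\eta^0)$ as reference and $X=y+\theta(y)(\eta-\eta^0)e_3$. Then $X(0)=\mathrm{Id}$, the bounds \eqref{1327} hold with $C_RT^{1/p'}$, and only $\|\theta'\|_{L^\infty}\|\eta-\eta^0\|_{L^\infty}<1$ is required. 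The price is posing the Lamé problem on the curved but $C^{2,\alpha}$ domain $\Omega(\eta^0)$.

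\textbf{Further corrections.}

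\emph{The role of $\frac1p+\frac1{2q}<\frac12$.} It does not give $B^{2(1-1/p)}_{qp}(\Omega)\hookrightarrow W^{1,\infty}(\Omega)$, which needs $\frac2p+\frac3q<1$ and fails e.g.\ for $p=3$, $q=4$. Nor does it give $W^{3,\infty}$ control of $\eta$. What it says is $1-\frac2p>\frac1q$, i.e.\ $\nabla v\in C([0,T];B^{1-2/p}_{qp}(\Omega))$ has a trace in $C([0,T];L^q(\partial\Omega))$ (Remark~\ref{1051}). The paper needs exactly this because its linear problem is a genuine triangular cascade: plate, then Lamé with the lifting $\theta\partial_t\eta e_3$, then transport. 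The whole stress feedback $\mathbb G_{\widehat X}(\widehat\rho,\widehat v,\widehat\eta)$ sits on the right-hand side, and the trace bound gives $\|\mathbb G_{\widehat X}\|_{L^p(0,T;L^q(\omega))}\le C_RT^{1/p}$. Your alternative, coupled Lamé--plate maximal regularity by a Neumann series, is workable because the normal stress trace is lower order. The places where you invoke the false embeddings (coefficients of $X$, non-contact) are actually covered by $\eta\in C([0,T];W^{3,q}(\omega))\hookrightarrow C([0,T];C^2(\overline\omega))$.

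\emph{The convex set.} It must contain the impermeability constraints $\widehat v_n=0$ on $\Gamma_0$ and $(\widehat v-\partial_t\widehat\eta e_3)_n=0$ on $\Gamma(\eta^0)$, as $\mathcal B_{R,T}$ does. Otherwise $\widehat v-\partial_t\widehat X$ is not tangential and your transport step is not well posed.

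\emph{Uniqueness.} Integrating the pressure difference by parts leaves a boundary term on $\Gamma(\eta^0)$, where $\delta v\cdot n\neq0$. That term is not controlled by $\|\delta\rho\|_{L^\infty(0,T;L^2(\Omega))}$ and must cancel. This is why the paper places the pressure difference in $\mathscr F_2$, which appears simultaneously in the fluid equation, the plate forcing and the tangential condition of \eqref{1336-1}--\eqref{1335-1}.
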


\vspace{.2cm}
\noindent 
{\bf Paper organization.}
Rest of the paper is organized as follows. In \cref{chV}, we introduce a suitable change of variables that maps the problem onto a fixed cylindrical domain. This reformulation generates nonlinear perturbations throughout the system, including within the Navier-slip boundary conditions. In \cref{lin}, we investigate the resulting linearized system by means of a cascade approach. Finally, in \cref{fp}, we prove the existence of strong solutions  result by applying the Tikhonov fixed-point theorem to obtain the existence of solutions for sufficiently small time $T$. The  uniqueness of solution is then established by deriving weak regularity estimates for an auxiliary linear coupled fluid-structure system, see subsection \ref{Sec-Uniquness} for details. Several auxiliary results are addressed in Appendices \ref{Apndx-A}, \ref{Apndx-Calcul}, \ref{Apndx-Weak-regu}.

\section{Change of variables}
\label{chV}
    
This change of variables is inspired from \cite{zbMATH06693017}.  
Let us define a mapping $X(t,\cdot)$ that transforms the reference domain
$\Omega(\eta^0) =: \Omega$ into the time-dependent domain $\Omega(\eta(t))$.
Let $\varepsilon>0$ and consider the open set
$$
\mathcal{O}_{\eta^0,\varepsilon}
=
\left\{
(y_1,y_2,y_3)\in \mathbb{R}^3
\;\middle|\;
(y_1,y_2)\in \omega,
\quad
\eta^0(y_1,y_2)-\varepsilon < y_3 < \eta^0(y_1,y_2)
\right\}.
$$
We choose $\varepsilon$ sufficiently small so that
$\mathcal{O}_{\eta^0,\varepsilon}\subset \Omega$.
Since $\eta^0\in W^{2,q}(\omega)$ with $q>3$, it admits an extension by zero
outside $\omega$ belonging to $W^{2,q}(\mathbb{R}^2)$.
Let $\alpha\in C_0^\infty(\mathbb{R})$ be a cut-off function such that
$\alpha\equiv 1$ on $(-\varepsilon/2,\varepsilon/2)$ and
$\alpha\equiv 0$ on $\mathbb{R}\setminus(-\varepsilon,\varepsilon)$.
We define
$$
\theta(y) := \alpha\bigl(y_3-\eta^0(y_1,y_2)\bigr).
$$
Observe that
\begin{equation}
	\label{theta}
	\theta(y)=
	\begin{cases}
		1, & y\in \mathcal{O}_{\eta^0,\varepsilon/2},\\[0.3em]
		0, & y\in \mathbb{R}^3\setminus \mathcal{O}_{\eta^0,\varepsilon}.
	\end{cases}
\end{equation}

We now introduce the following change of variables:
\begin{equation}
	\label{lag}
	X(y)=
	\begin{pmatrix}
		y_1 \\[0.2em]
		y_2 \\[0.2em]
		y_3+\theta(y)\bigl(\eta(y_1,y_2)-\eta^0(y_1,y_2)\bigr)
	\end{pmatrix},
	\qquad y\in \mathbb{R}^3,\; t>0.
\end{equation}
By construction, one has $X(\Omega)=\Omega(\eta(t))$. Moreover, provided that
$\eta$ remains sufficiently close to $\eta^0$, the mapping $X$ defines a
$C^1$-diffeomorphism from $\Omega$ onto $\Omega(\eta(t))$.
Indeed, $X\in C^1(\mathbb{R}^3)$ since $\eta-\eta^0\in W^{2,q}(\omega)$ and thus
admits an extension in $W^{2,q}(\mathbb{R}^2)\hookrightarrow C^1(\mathbb{R}^2)$.
Furthermore, the Jacobian determinant of $X$ satisfies
$$
\det\nabla X = 1 + \theta'(y)\bigl(\eta-\eta^0\bigr).
$$
Consequently, $X$ is a $C^1$-diffeomorphism of $\mathbb{R}^3$ provided that
$$
\|\theta'\|_{L^\infty(\mathbb{R})}
\|\eta-\eta^0\|_{L^\infty(\omega)} < 1.
$$
This condition is ensured for sufficiently small time $T$, as will be made
precise in \cref{fp}. Finally, note that $X\equiv \mathbb{I}_3$ in
$\Omega\setminus \mathcal{O}_{\eta^0,\varepsilon}$ and that
$X(\partial\Omega)=\partial\Omega(\eta)$. Hence, $X$ is a $C^1$-diffeomorphism
from $\Omega$ onto $\Omega(\eta)$, and we denote its inverse by $Y$.

We denote 
$$
b_X=\Cof\nabla X,\quad \delta_X=\det\nabla X , 
$$
and consider  
\begin{equation}
\label{1807}
v(t,y)=b_X^\top\widetilde{v}(t,X(t,y)),\quad \rho(t,y)=\widetilde{\rho}(t,X(t,y)). 
\end{equation}
	We have the following relation
	$$
	N_{\eta}=b_X N_{\eta^0},
	$$
	where $N_{\eta}$ (resp. $N_{\eta^0}$) designates the outward normal vector on $\partial\Omega(\eta(t))$ (resp. $\partial\Omega$) defined by
	$$
	N_\eta=\begin{pmatrix}
		-\partial_s\eta \\ 1
	\end{pmatrix},\quad  N_{\eta^0}=	\begin{pmatrix}
		-\partial_s\eta^0 \\ 1
	\end{pmatrix}.
	$$
	Consequently, 
	$$
	\tau_{\eta}^j=({b_X^{-1}})^\top (\tau_{\eta^0})^j,\quad j=1,2,
	$$
	where $(\tau^j_{\eta})_{j=1,2}$ (resp. $(\tau^j_{\eta^0})_{j=1,2}$) denotes  a basis of the tangent subspace on each point of $\partial\Omega(\eta(t))$ (resp. $\partial\Omega$). 
	To simplify the notations, we will write in the sequel
	$$
	n_{\eta^0}=:n,\quad N_{\eta^0}=:N,\quad \tau_{\eta^0}^j=:\tau^j,\ j=1,2. 
	$$
	The equation\eqref{fs}$_1$ gives
	$$
	\delta_X\partial_t\rho+(v-b_X^\top\partial_tX)\nabla \rho=-\rho \nabla\cdot v.
	$$
	In this case, $b_X^\top\partial_tX=\partial_t X$. Hence, the transport equation writes
	\begin{equation}
		\label{dens}
		\delta_X\partial_t\rho+(v-\partial_tX)\nabla \rho=-\rho \nabla\cdot v.
	\end{equation}
	For the remaining terms, we can refer to \cite{DJEBOUR2024104022} for similar calculations. Let us denote 
	\begin{equation}
		\label{defT}
		[\mathbb{T}_X(\rho,v,\eta)]_i: =\rho \left( (\nabla X) \partial_t v+(\nabla X)(\nabla v)\partial_t Y(X)+\delta_X\partial_t (b_X^{-\top})(X) v\right) ,\quad i=1,2,3,
	\end{equation}
    \begin{multline}
        \label{defD}
        \left[\mathbb{D}_X (v)\right]_{ij}
		: =\frac{1}{2}\bigg(
		\sum_{mk}\left(\frac{\partial (b_X^{-\top})_{ik}}{\partial x_m}(X)+\frac{\partial (b_X^{-\top})_{mk}}{\partial x_i}(X)\right) v_k (b_X)_{mj}
		\\+
		\sum_{m,k,l}\left( 
		(b_X^{-\top})_{ik}(X)\frac{\partial Y_l}{\partial x_m}(X)
		+(b_X^{-\top})_{mk}(X)\frac{\partial Y_l}{\partial x_i}(X)
		\right) \frac{\partial v_k}{\partial y_l}
		(b_X)_{mj}\bigg),
    \end{multline}
	\begin{equation}
		\label{defS}
		 \left[\mathbb{S}_X (v)\right]_{ij}= 2\mu\left[\mathbb{D}_X (v)\right]_{ij}+\lambda\frac{\nabla\cdot v}{\delta_X}(b_X)_{ij},\quad i,j=1,2,3,
	\end{equation} 
	and
	\begin{multline}\label{defN}
		[\mathbb{N}_X(\rho,v,\eta)]_i:=\rho\left(\nabla\cdot \left( (b^{-\top}_{X}v\otimes b^{-\top}_{X} v)b_{X}\right)-(\nabla\cdot v )b^{-\top}_{X}v\right)\\=\rho\left( 
		\sum_{k,j,m} \frac{\partial (b_X^{-\top})_{ik}}{\partial x_j}(X) \frac{\partial X_j}{\partial y_m} v_k v_m
		+\sum_{k,l}(b_X^{-\top})_{ik}(X)\frac{\partial v_k}{\partial y_l} v_l\right) ,\quad i = 1,2,3.
	\end{multline}
	After standard computations, we get
	\begin{equation}\label{15:39}
		\delta_X\widetilde{\rho}(X)\partial_t \widetilde{v}(X) = \mathbb{T}_X(\rho,v,\eta),
	\end{equation}
	\begin{equation}\label{15:40}
		\delta_X \nabla\cdot \left( \mathbb{S} (\widetilde{v}) \right)(X)=\nabla\cdot \left(\mathbb{S}_X (v)\right), 
		\ \ 
		\delta_X \nabla \widetilde{p}(X)=\gamma \rho^{\gamma-1}b_X \nabla \rho,
		\ \  \delta_X \widetilde{\rho}(X)\left[(\widetilde{v}\cdot \nabla)\widetilde{v}\right](X)=\mathbb{N}_X(\rho,v,\eta),
	\end{equation}
    where $\mathbb S$ is the viscous stress tensor defined by \eqref{Viscous-stress}. 
    
    Thus, the system \eqref{fs}--\eqref{ic} is rewritten in a cylindrical domain as 
	\begin{equation}\label{fs1}
		\left\{
		\begin{array}{cc}
			\partial_t\rho+\frac{1}{\delta_X}(v-\partial_tX)\nabla \rho+\rho^0\nabla\cdot v=\mathbb{H}_X(\rho,v,\eta) &\quad  \text {in } (0,T) \times \Omega, \vspace{0.15cm} \\
			\rho^0\partial_t v - \nabla\cdot \left(\mathbb{S}(v)\right) = \mathbb{F}_X(\rho,v,\eta) &\quad  \text {in } (0,T) \times \Omega, \vspace{0.15cm} \\
			\partial_{tt}\eta + A_1 \eta +A_2\partial_t\eta= \mathbb{G}_X(\rho,v,\eta)& \text{in } (0,T)\times\omega,\\
		\end{array}
		\right.
	\end{equation}
	with the boundary conditions 
	\begin{equation}
		\label{bc1}
		\left\{
		\begin{array}{cc}
			v_n = 0,\quad \quad \left[2\mu\mathbb{D}(v)n +\beta_0 v\right] \cdot \tau^j=0 &\text{ on } (0,T)\times\Gamma_{0},\quad j=1,2, \vspace{0.15cm}\\
			\left(v-\partial_t \eta e_3 \right)_{n}=0,\quad \left[2\mu\mathbb{D}(v)n +\beta_1 (v-\partial_t\eta e_3)\right] \cdot \tau^j = [\mathbb{K}_X(\rho,v,\eta)]_j&\text{ on } (0,T)\times\Gamma (\eta^0),\quad j=1,2,
		\end{array}
		\right.
	\end{equation}
	and the initial conditions
	\begin{equation}
		\label{ic1}
		\rho(0,\cdot)= \rho^0, \ \ v(0,\cdot)= v^0 \ \ \text{in } \, \Omega,   \quad
		\eta(0,\cdot)=\eta^0, \ \ \partial_t \eta(0,\cdot)=\eta^1 \ \ \text{in } \, \omega,
	\end{equation}
	where 
	\begin{equation}
		\label{H}
		\mathbb{H}_X(\rho,v,\eta)=\left(\rho^0-\frac{\rho}{\delta_X}\right) \nabla\cdot v,
	\end{equation}
	\begin{equation}
		\label{F}
		\mathbb{F}_X(\rho,v,\eta)=\left( \rho^0\partial_t v-\mathbb{T}_X(\rho,v,\eta)\right) +\nabla \cdot (\mathbb{S}_X(v)-\mathbb{S}(v))-\mathbb{N}_X( \rho,v,\eta)-\gamma \rho^{\gamma-1}b_X \nabla \rho,
	\end{equation}
	\begin{equation}
		\label{G}
		\mathbb{G}_X(\rho,v,\eta)=-\mathbb{S}_X(v)N \cdot e_3+\rho^\gamma,
	\end{equation}
	\begin{equation}
		\label{K}
	[	\mathbb{K}_X(\rho, v,\eta)]_j=\left[2\mu \left( \mathbb{D}(v)-b_X^{-1}\frac{|N|}{|N_\eta|}\mathbb{D}_X(v) \right)n +\beta_1\left(\mathbb{I}_3-b_X^{-1}b_X^{-\top} \right)v+\beta_1\left(b_X^{-1}-\mathbb{I}_3\right)\partial_t\eta e_3 \right] \cdot \tau^j, \ \  j=1,2.
	\end{equation}

After applying the above change of variables, Theorem \ref{Thm-main} can be reformulated as follows. The corresponding initial data, defined in \eqref{initialD-Main}, now satisfy
\begin{equation}
\label{initialD}
\begin{array}{c}
\rho^0\in W^{2,q}(\Omega),\quad \min_{\overline{\Omega}}\rho^0>0,\\ \vspace*{0,1cm}
\eta^0\in B_{qp}^{2(2-1/p)}(\omega),\quad \min_{\overline{\omega}}\left( \eta^0+1\right) >0,\\ \vspace*{0,1cm}
v^0\in [B_{qp}^{2(1-1/p)}(\Omega)]^3, \quad \eta^1\in B_{qp}^{2(1-1/p)}(\omega),\\ \vspace*{0,1cm}
v^0_n=0\quad\text{  on } \Gamma_{0},\quad (v^0-\eta^1e_3)_n=0 \quad \text{  on } \Gamma(\eta^0),
\end{array}
\end{equation}
where $n$ is the unit outward normal vector to $\partial\Omega$. 
\begin{Theorem}[Existence and uniqueness---reformulated]
\label{main}
Let $p>2$, $q>3$ with $\frac{1}{p}+\frac{1}{2q}<\frac{1}{2}$ and suppose that the initial data satisfy \eqref{initialD}.
Then there exists  $T>0$ such that the system \eqref{fs1}--\eqref{ic1} admits a unique strong solution in the time interval  $(0,T)$ with
$$
\rho\in L^\infty(0,T;W^{1,q}(\Omega))\cap W^{1,p}(0,T;L^q(\Omega)),
$$
$$
v\in L^p(0,T;[W^{2,q}(\Omega)]^3)\cap C([0,T]; [B_{pq}^{2(1-1/p)}(\Omega)]^3)\cap W^{1,p}(0,T;[L^q(\Omega)]^3),
$$
$$
\eta\in L^p(0,T;W^{4,q}(\omega))\cap C([0,T];B_{qp}^{2(2-1/p)}(\omega))\cap W^{2,p}(0,T;L^q(\omega)),
$$
$$
\partial_t\eta\in L^p(0,T;W^{2,q}(\omega))\cap C([0,T];B_{qp}^{2(1-1/p)}(\omega))\cap W^{1,p}(0,T;L^q(\omega)).
$$
\end{Theorem}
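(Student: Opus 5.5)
The proof has three parts: linear maximal-regularity estimates for a decoupled cascade, a Tikhonov fixed point on a compact convex set for small $T$, and uniqueness by a weak-norm contraction.

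\textbf{Step 1: the linearized system as a cascade.} We freeze the coefficients and consider the linear problem associated with \eqref{fs1}--\eqref{bc1}. There are three pieces.
\begin{itemize}
\item A transport equation $\partial_t\rho+\frac{1}{\delta_X}(\bar v-\partial_tX)\cdot\nabla\rho=h$ with a given $W^{1,q}$-valued velocity field. Since the impermeability condition is preserved by \eqref{lag}, the field $\bar v-\partial_tX$ is tangent to $\partial\Omega$. Characteristics therefore stay inside $\Omega$, and we get $\rho\in L^\infty(0,T;W^{1,q})\cap W^{1,p}(0,T;L^q)$ with Gronwall-type bounds.
\item A Lamé system $\rho^0\partial_tv-\nabla\cdot\mathbb S(v)=f$ with Navier-slip conditions. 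Here the normal trace on $\Gamma(\eta^0)$ is $\partial_t\eta\, e_3\cdot n$ and the tangential data is $\mathbb K$.
\item The damped plate equation $\partial_{tt}\eta+A_1\eta+A_2\partial_t\eta=g$ with clamped conditions.
\end{itemize}
For the plate, maximal $L^p$-$L^q$ regularity follows from the fact that the damped plate operator generates an analytic semigroup with bounded $\mathcal H^\infty$-calculus on $W^{2,q}_0\times L^q$. For the fluid, we use $L^p$-$L^q$ maximal regularity for the Lamé operator with slip conditions; $\rho^0\in W^{2,q}$ being a positive variable coefficient is handled by perturbation/localization. We lift the inhomogeneous normal datum $\partial_t\eta\, e_3\cdot n$ via an extension operator. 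We measure the tangential datum in the trace space, controlled through the $\mathscr W^{-1,q}$-type estimates of \eqref{1213} so that no time derivative of a nonlocal operator appears.

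The coupling is solved in cascade. We first solve the plate with a given right-hand side $\mathbb G$, which in the fixed-point map is evaluated at the previous iterate. Then the fluid is solved with $\partial_t\eta$ as data, and finally the density.

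\textbf{Step 2: nonlinear estimates and fixed point.} We define the set $\mathcal B_{R,T}$ of triples $(\hat\rho,\hat v,\hat\eta)$ in the spaces of Theorem~\ref{main} with fixed initial data, norms bounded by $R$, and $\min\hat\rho\ge \frac12\min\rho^0$. On this set we prove
$$\|\mathbb H_X\|+\|\mathbb F_X\|_{L^p(L^q)}+\|\mathbb G_X\|_{L^p(W^{1-1/q,q})}+\|\mathbb K_X\|\le C(R)T^{\kappa},\qquad \kappa>0.$$
This uses $X-\mathrm{Id}=O(\|\eta-\eta^0\|)$, which is small for small $T$, and time-Hölder embeddings such as $W^{1,p}(L^q)\cap L^p(W^{2,q})\hookrightarrow C^{\alpha}(W^{1,q})$; the latter is where the constraint $\frac1p+\frac1{2q}<\frac12$ enters. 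It also uses the algebra property of $W^{1,q}$ for $q>3$. The pressure term $\gamma\rho^{\gamma-1}b_X\nabla\rho$ only lies in $L^\infty(L^q)$, which is still fine for $L^p(L^q)$ with a factor $T^{1/p}$.

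We then choose $R$ large and $T$ small so that the solution map $\Lambda$ sends $\mathcal B_{R,T}$ into itself. The set $\mathcal B_{R,T}$ is convex and compact in a weaker topology (Aubin--Lions), and $\Lambda$ is continuous in that topology. Tikhonov's theorem then gives a fixed point. We use Tikhonov rather than Banach because differences of densities lose one derivative in the transport equation, so $\Lambda$ is not contractive in the strong norm.

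\textbf{Step 3: uniqueness, the main obstacle.} Take two solutions and subtract. The density difference is estimated only in $L^\infty(0,T;L^2)$, losing one derivative. Accordingly, we estimate the velocity and plate differences in a weaker parabolic Hilbert setting, roughly $v\in L^2(H^1)\cap H^1(H^{-1})$-type and $\eta\in L^2(H^2)\cap H^1(L^2)$. This requires regularity estimates for a coupled, perturbed fluid-structure operator whose right-hand sides lie in dual spaces; these are the estimates for the auxiliary linear coupled system. The obstacle is to handle the pressure difference $\nabla(\rho_1^\gamma-\rho_2^\gamma)$ and the boundary difference $\mathbb K$ in negative norms while keeping the plate coupling through the normal trace. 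I would write the coupled problem variationally, testing with $(w,\partial_t\eta)$ jointly so that the interface terms cancel, and use energy estimates. Differences of coefficients are bounded by the strong bounds of the solutions times the weak norm of the difference, which closes a Gronwall inequality forcing the difference to vanish.
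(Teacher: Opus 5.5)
Your overall architecture is the paper's. It has three parts: the cascade plate $\to$ Lam\'e system (lifting $\theta\,\partial_t\eta\,e_3$, tangential data in $L^p(W^{1,q})\cap W^{1,p}(\mathscr W^{-1,q})$) $\to$ transport along the tangent field $\widehat v-\partial_t\widehat X$; Tikhonov on $\mathcal B_{R,T}$ for small $T$; and uniqueness through the energy estimate of Lemma \ref{1821}, obtained by returning to $\Omega(\eta^\star)$ and testing with $(w,\partial_t\eta)$, with $\overline\rho$ only in $L^\infty(0,T;L^2)$. Closing by Gr\"onwall rather than by $\|\cdot\|_{\mathcal X_{weak}}\le CT^\gamma\|\cdot\|_{\mathcal X_{weak}}$ plus iteration is an inessential variant.

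One estimate in your Step~2 fails as written: $\|\mathbb G_X\|_{L^p(W^{1-1/q,q})}\le C(R)T^\kappa$. In the fixed-point map, $\mathbb G_{\widehat X}=-\mathbb S_{\widehat X}(\widehat v)N\cdot e_3+\widehat\rho^\gamma$ is the full normal stress of the previous iterate, not a perturbation vanishing at $t=0$. Its $L^p(0,T;W^{1-1/q,q}(\omega))$ norm is a top-order quantity, controlled only by $\|\widehat v\|_{L^p(0,T;W^{2,q}(\Omega))}\le R$, with no gain in $T$. Velocities concentrating in a boundary layer of width $T^{1/2}$ near $\Gamma(\eta^0)$ lie in $\mathcal B_{R,T}$ and keep this norm of order $R$.

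The fix is what the paper does. The plate theorem only needs $g\in L^p(0,T;L^q(\omega))$, and there the smallness comes from integrability in time. Since $\widehat v\in C([0,T];B^{2(1-1/p)}_{qp}(\Omega))$, you have $\nabla\widehat v\in C([0,T];B^{1-2/p}_{qp}(\Omega))$. Its trace lies in $L^q(\partial\Omega)$ exactly when $1-2/p>1/q$, i.e.\ $\frac1p+\frac1{2q}<\frac12$ (Remark \ref{1051}). This gives $\|\mathbb G_{\widehat X}\|_{L^p(0,T;L^q(\omega))}\le C_RT^{1/p}$. That is where the hypothesis is actually used. The H\"older embedding $W^{1,p}(L^q)\cap L^p(W^{2,q})\hookrightarrow C^\alpha([0,T];W^{1,q})$ you cite needs only $p>2$.

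Two smaller points about your $\mathcal B_{R,T}$:
\begin{itemize}
\item Include the impermeability constraints on $\widehat v$. The transport step needs $(\widehat v-\partial_t\widehat X)\cdot n=0$ for the input, not only for the output.
\item You need $\widehat\rho-\rho^0$ small in $L^\infty((0,T)\times\Omega)$ to make $(\widehat\rho\nabla\widehat X-\rho^0)\partial_t\widehat v$ small in $L^p(L^q)$. The paper builds $\|\widehat\rho-\rho^0\|_{L^\infty(0,T;W^{1,q})}\le RT^{1/p'}$ into the set and propagates it via Remark \ref{1614}, using $\rho^0\in W^{2,q}$. With your lower bound $\min\widehat\rho\ge\frac12\min\rho^0$ instead, you can recover it by interpolating $\|\widehat\rho-\rho^0\|_{C([0,T];L^q)}\le RT^{1/p'}$ against the $W^{1,q}$ bound, since $W^{s,q}\hookrightarrow L^\infty$ for $3/q<s<1$.
\end{itemize}
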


Some remarks are in order. 
\begin{Remark}
\label{1051}
The condition $1/p+1/2q<1/2$ ensures that the trace mapping from $B_{qp}^{1-2/p}(\Omega)$ into $L^q(\partial\Omega)$ is continuous, see for instance \cite[p. 200]{zbMATH05779185}. This is important to manage some terms in the fixed point part.  
\end{Remark}
\begin{Remark}
\label{spaces}
For notational convenience, each unknown will be identified with its corresponding function space
$$
\mathcal{X}_\rho=\{\rho\in C([0,T];W^{1,q}(\Omega))\cap W^{1,p}(0,T;L^q(\Omega))\},
$$
$$
\mathcal{X}_v=\{v\in L^p(0,T;[W^{2,q}(\Omega)]^3)\cap W^{1,p}(0,T; [L^q(\Omega)]^3)\},
$$
$$
\mathcal{X}_\eta=\{\eta \in L^p(0,T;W^{4,q}(\omega))\cap W^{2,p}(0,T; L^q(\omega)), \quad \partial_t\eta\in L^p(0,T;W^{2,q}(\omega))\cap W^{1,p}(0,T; L^q(\omega))\}.
$$
On the other hand, we have the following embeddings (see, for instance \cite[Theorem 4.10.2, p. 180]{zbMATH00742737})
$$
L^p(0,T;[W^{2,q}(\Omega)]^3)\cap W^{1,p}(0,T; [L^q(\Omega)]^3) \hookrightarrow C([0,T]; [B_{pq}^{2(1-1/p)}(\Omega)]^3),
$$
and 
$$
L^p(0,T;W^{4,q}(\omega))\cap W^{2,p}(0,T; L^q(\omega)) \hookrightarrow C([0,T];B_{qp}^{2(2-1/p)}(\omega)),
$$
$$
L^p(0,T;W^{2,q}(\omega))\cap W^{1,p}(0,T; L^q(\omega)) \hookrightarrow C([0,T];B_{qp}^{2(1-1/p)}(\omega)). 
$$
Then, we set their corresponding norms
$$
\|\rho\|_{\mathcal{X}_\rho}=\|\rho\|_{C([0,T];W^{1,q}(\Omega)) }+ \|\rho\|_{W^{1,p}(0,T;L^{q}(\Omega))} , 
$$
$$
\|v\|_{\mathcal{X}_v}=\|v\|_{L^p(0,T;[W^{2,q}(\Omega)]^3)}+ \|v\|_{W^{1,p}(0,T; [L^q(\Omega)]^3)}+ \|v\|_{C([0,T]; [B_{pq}^{2(1-1/p)}(\Omega)]^3)},
$$
\begin{multline*}
\|\eta\|_{\mathcal{X}_\eta}=\|\eta\|_{L^p(0,T;W^{4,q}(\omega))}+\|\eta\|_{W^{2,p}(0,T; L^q(\omega))} + \|\eta\|_{C([0,T];B_{qp}^{2(2-1/p)}(\omega))}  \\
+ \|\partial_t\eta\|_{L^p(0,T;W^{2,q}(\omega))}+\|\partial_t\eta\|_{W^{1,p}(0,T; L^q(\omega))}+ \|\partial_t\eta\|_{C([0,T];B_{qp}^{2(1-1/p)}(\omega))}.
\end{multline*}
Since $p>2$, we have the following embeddings
$$
B_{qp}^{2(1-1/p)}(\Omega) \hookrightarrow W^{1,q}(\Omega),\quad B_{qp}^{2(2-1/p)}(\omega) \hookrightarrow W^{3,q}(\omega).
$$
In particular, this implies: 
$v\in C([0,T]; [W^{1,q}(\Omega)]^3)$, $\eta \in C([0,T]; W^{3,q}(\omega))$ and  $ \partial_t\eta \in C([0,T]; W^{1,q}(\omega))$.
\end{Remark}

\section{Linearized system}
	\label{lin}
	We consider a set of functions $(\widehat{\rho},\widehat{v},\widehat{\eta})$  such that
	$$
	\widehat{v}_n=0\quad \text{ on }\Gamma_{0},\quad (\widehat{v}-\partial_t\widehat{\eta}e_3)_n=0\quad \text{ on }\Gamma(\eta^0).
	$$
Let $\widehat{X}(t,\cdot)=X_{\widehat{\eta}(t)}(\cdot)$. We consider now the linearized system around $(\widehat{\rho},\widehat{v},\widehat{\eta})$ to obtain
	\begin{equation}\label{fs1*}
		\left\{
		\begin{array}{cc}
			\partial_t\rho+\frac{1}{\delta_{\widehat{X}}}(\widehat{v}-\partial_t\widehat{X})\nabla \rho+\rho^0\nabla\cdot v=\mathbb{H}_{\widehat{X}}(\widehat{\rho},\widehat{v},\widehat{\eta})&\quad  \text {in } (0,T) \times \Omega, \\
			\rho^0\partial_t v - \nabla\cdot \left(\mathbb{S}(v)\right) = \mathbb{F}_{\widehat{X}}(\widehat{\rho},\widehat{v},\widehat{\eta})&\quad  \text {in } (0,T) \times \Omega,  \vspace{0.15cm}\\
			\partial_{tt}\eta + A_1 \eta +A_2\partial_t\eta= \mathbb{G}_{\widehat{X}}(\widehat{\rho},\widehat{v},\widehat{\eta})  & \text{in } (0,T)\times\omega,\\
		\end{array}
		\right.
	\end{equation}
	with the boundary conditions 
	\begin{equation}
		\label{bc1*}
		\left\{
		\begin{array}{cc}
			v_n = 0,\quad \quad \left[2\mu\mathbb{D}(v)n +\beta_0 v\right] \cdot \tau^j=0 &\text{ on } (0,T)\times\Gamma_{0},  \vspace{0.15cm}\\
			\left(v-\partial_t \eta e_3 \right)_{n}=0,\quad \left[2\mu\mathbb{D}(v)n +\beta_1 (v-\partial_t{\eta} e_3)\right] \cdot \tau^j = [\mathbb{K}_{\widehat{X}}(\widehat{\rho},\widehat{v},\widehat{\eta})]_j&\text{ on } (0,T)\times\Gamma (\eta^0), 
		\end{array}
		\right.
	\end{equation}
    for $j=1,2$. 

    Basically, we analyze the following linear system for given functions  $f$, $g$, $h$ and $k_j$, 
	\begin{equation}\label{fs1l}
		\left\{
		\begin{array}{cc}
			\partial_t\rho+\frac{1}{\delta_{\widehat{X}}}(\widehat{v}-\partial_t\widehat{X})\nabla \rho+\rho^0\nabla\cdot v=h&\quad  \text {in } (0,T) \times \Omega, \\
			\rho^0\partial_t v - \nabla\cdot \left(\mathbb{S}(v)\right) = f&\quad  \text {in } (0,T) \times \Omega,  \vspace{0.15cm}\\
			\partial_{tt}\eta + A_1 \eta +A_2\partial_t\eta= g  & \text{in } (0,T)\times\omega,\\\end{array}
		\right.
	\end{equation}
	with the boundary conditions 
	\begin{equation}
		\label{bc1l}
		\left\{
		\begin{array}{cc}
				v_n = 0,\quad \quad \left[2\mu\mathbb{D}(v)n +\beta_0 v\right] \cdot \tau^j=0 & \text{ on } (0,T)\times\Gamma_{0},  \vspace{0.15cm}\\
			\left(v-\partial_t \eta e_3 \right)_{n}=0,\quad \left[2\mu\mathbb{D}(v)n +\beta_1 (v-\partial_t\eta e_3)\right] \cdot \tau^j = k_j & \text{ on } (0,T)\times\Gamma (\eta^0) ,
		\end{array}
		\right.
	\end{equation}
for $j=1,2$.

\subsection{The plate equation}
Here, we  consider the damped beam equation
\begin{equation}
\left\{
\begin{array}{cc}
\label{plate}
\partial_{tt}\eta + \Delta^2 \eta -\Delta\partial_t\eta = g & \text{in } (0,T)\times\omega,  \vspace{0.15cm}\\
\eta=\nabla_s\eta\cdot n_\omega=0 & \text{ on }(0,T)\times\partial\omega,  \vspace{0.15cm}\\
\eta(0)=\eta^0 & \text{in } \omega,  \vspace{0.15cm}\\
\partial_t\eta(0)=\eta^1 & \text{in } \omega.\\
\end{array}\right.
\end{equation}
The following result was established in \cite{zbMATH06436265}.

\begin{Theorem}
Let 
$g\in L^p(0,T;L^q(\Omega))$, $\eta^0\in B_{qp}^{2(2-1/p)}(\omega)$ and $\eta^1\in B_{qp}^{2(1-1/p)}(\omega).$ 
Then \eqref{plate} admits a unique solution $\eta\in\mathcal{X}_\eta$ satisfying the estimate
\begin{equation}
\label{1118}
\|\eta\|_{\mathcal{X}_\eta}\leq C\left(\|\eta^0\|_{B_{qp}^{2(2-1/p)}(\omega)}+\|\eta^1\|_{B_{qp}^{2(1-1/p)}(\omega)}+\|g\|_{L^p(0,T;L^q(\Omega))}\right). 
\end{equation}
\end{Theorem}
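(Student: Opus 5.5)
The plan is to rewrite \eqref{plate} as a first-order abstract Cauchy problem and apply maximal $L^p$-regularity. Set $U=(\eta,\partial_t\eta)$ on the phase space $\mathcal{H}=W^{2,q}_0(\omega)\times L^q(\omega)$. The operator is
$$
\mathcal{A}=\begin{pmatrix}0 & -\mathbb{I}\\ \Delta^2 & -\Delta\end{pmatrix},\qquad D(\mathcal{A})=\left(W^{4,q}(\omega)\cap W^{2,q}_0(\omega)\right)\times\left(W^{2,q}(\omega)\cap W^{1,q}_0(\omega)\right).
$$
Here $W^{2,q}_0$ encodes the clamped conditions $\eta=\nabla_s\eta\cdot n_\omega=0$, and the Dirichlet condition on $\partial_t\eta$ is inherited from $\eta=0$ on $\partial\omega$. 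With this notation, \eqref{plate} reads $U'+\mathcal{A}U=(0,g)$ with $U(0)=(\eta^0,\eta^1)$.

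The key step is to prove that $\mathcal{A}$ has maximal $L^p$-regularity on $\mathcal{H}$ on finite intervals. For this I would use Weis' theorem: on the UMD space $\mathcal{H}$ it suffices to show that $\mathcal{A}+\lambda_0$ is $\mathcal{R}$-sectorial of angle $<\pi/2$ for some shift $\lambda_0$. To get this, solve the resolvent equation $(\lambda+\mathcal{A})(\eta,\xi)=(a,b)$. Eliminating $\xi=\lambda\eta-a$ gives the fourth-order elliptic problem
$$
\lambda^2\eta+\Delta^2\eta-\lambda\Delta\eta=b+\lambda a-\Delta a,
$$
with clamped boundary conditions. Its symbol is $\lambda^2+|\xi|^4+\lambda|\xi|^2=(\lambda+\kappa_+|\xi|^2)(\lambda+\kappa_-|\xi|^2)$ with $\kappa_\pm=(1\pm\sqrt{-3})/2$. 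This is parameter-elliptic in a sector strictly larger than the right half-plane: $\arg\kappa_\pm=\pm\pi/3$, so the symbol does not vanish for $|\arg\lambda|<2\pi/3$. The clamped conditions satisfy the Lopatinskii--Shapiro condition in this sector. One then invokes the Denk--Hieber--Prüss theory of $\mathcal{R}$-bounded solution operators for parameter-elliptic boundary value problems. It yields $\mathcal{R}$-bounds for $\lambda^{1-j/4}\nabla^j(\lambda^2+\Delta^2-\lambda\Delta)^{-1}$ in the appropriate anisotropic scaling, and these translate into $\mathcal{R}$-sectoriality of $\mathcal{A}+\lambda_0$ on $\mathcal{H}$.

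\emph{This is the main obstacle.} The natural scaling weights $\eta$ like $\lambda^{-1}$ relative to $\xi$, so one has to check that the $W^{2,q}$-component of $\mathcal{H}$ scales correctly. An alternative I would keep in reserve is to decouple via the factorization above. Writing the problem as two successive second-order parabolic problems $(\partial_t-\kappa_\pm\Delta)$ is not directly compatible with the clamped conditions, however. Instead I would handle it by localization: reduce to half-space model problems with the clamped conditions, where the Lopatinskii--Shapiro check amounts to an explicit $2\times2$ determinant of exponentials $e^{-\sqrt{\lambda/\kappa_\pm}\,x_n}$-type solutions. Lower-order perturbation arguments then take care of the variable coefficients and the bounded domain.

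Once maximal regularity holds, the Da Prato--Grisvard/Prüss trace theory gives the following. For $U(0)$ in the real interpolation space $(\mathcal{H},D(\mathcal{A}))_{1-1/p,p}$ and right-hand side $(0,g)\in L^p(0,T;\mathcal{H})$, there is a unique solution $U\in W^{1,p}(0,T;\mathcal{H})\cap L^p(0,T;D(\mathcal{A}))$. The solution depends continuously on the data, with constants uniform for $T$ bounded. Identifying the interpolation space gives $B^{2(2-1/p)}_{qp}(\omega)\times B^{2(1-1/p)}_{qp}(\omega)$, up to the boundary compatibility conditions that are automatically meaningful for these exponents. Unwinding $U$ gives exactly the norms defining $\mathcal{X}_\eta$, hence \eqref{1118}. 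The $C([0,T];\cdot)$ parts follow from the trace embeddings recalled in \cref{spaces}.
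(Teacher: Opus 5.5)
The paper gives no argument for this statement; it quotes the result from \cite{zbMATH06436265}. Your overall route (first-order reformulation, $\mathcal{R}$-sectoriality plus Weis' theorem, then identification of the trace space) is the standard one. However, the step you yourself flag as the main obstacle is a genuine gap, and your fallback does not close it.

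Write $P(\lambda)=\lambda^2-\lambda\Delta+\Delta^2$ with clamped conditions. The $L^q$ parameter-elliptic theory gives $|\lambda|^{2-j/2}\|\nabla^jP(\lambda)^{-1}h\|_{L^q}\le C\|h\|_{L^q}$ for $0\le j\le 4$. The weight is $2-j/2$, not $1-j/4$, because $P$ is quadratic in $\lambda$ with $\lambda\sim|\xi|^2$. Inserting $h=b+\lambda a-\Delta a$ only yields $|\lambda|\,\|\eta\|_{W^{2,q}}\le C\bigl(\|b\|_{L^q}+|\lambda|\,\|a\|_{L^q}+\|a\|_{W^{2,q}}\bigr)$. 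The term $|\lambda|\,\|a\|_{L^q}$ is not controlled by $\|a\|_{W^{2,q}}$ uniformly in $\lambda$, so these bounds do not translate into $\mathcal{R}$-sectoriality on $W^{2,q}_0\times L^q$.

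The missing idea is to let $a$ enter through a negative-order space. Regard $P(\lambda)$ as an operator $W^{2,q}_0(\omega)\to W^{-2,q}(\omega)=(W^{2,q'}_0(\omega))'$. Then
\[
\eta=P(\lambda)^{-1}b+\lambda^{-1}\bigl(a-P(\lambda)^{-1}\Delta^2a\bigr),\qquad \xi=\lambda P(\lambda)^{-1}b-P(\lambda)^{-1}\Delta^2a .
\]
So you need uniform $\mathcal{R}$-bounds for $P(\lambda)^{-1}\Delta^2$ on $W^{2,q}_0(\omega)$, and for $\lambda P(\lambda)^{-1}\Delta^2$ from $W^{2,q}_0(\omega)$ into $L^q(\omega)$. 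One way to get them is by duality: the clamped realization is symmetric, so the transposes are $\Delta^2P(\lambda)^{-1}$ on $L^{q'}$ and $\lambda\Delta^2P(\lambda)^{-1}\colon L^{q'}\to W^{-2,q'}$, both covered by the $L^{q'}$ theory. Combine this with complex interpolation between $L^q$ and $W^{4,q}\cap W^{2,q}_0$. Alternatively, treat the $2\times 2$ system directly as a parameter-elliptic Douglis--Nirenberg system. Localization and the Lopatinskii--Shapiro check (which does hold for $|\arg\lambda|<2\pi/3$) only concern the scalar $L^q$ estimate, which is not where the difficulty lies.

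Two further corrections. First, the domain of $\mathcal{A}$ is wrong. The first component of $\mathcal{A}(\eta,\xi)$ is $-\xi$, which must lie in $W^{2,q}_0(\omega)$. Hence the second factor of $D(\mathcal{A})$ is $W^{2,q}_0(\omega)$, not $W^{2,q}\cap W^{1,q}_0$: $\partial_t\eta$ inherits both clamped conditions, and the formula for $\xi$ above indeed lands in $W^{2,q}_0$.

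Second, the trace space is consequently
\[
\bigl\{(\eta^0,\eta^1)\in B^{2(2-1/p)}_{qp}(\omega)\times B^{2(1-1/p)}_{qp}(\omega):\ \eta^0=\nabla_s\eta^0\cdot n_\omega=0,\ \eta^1=\nabla_s\eta^1\cdot n_\omega=0\ \text{on }\partial\omega\bigr\}.
\]
The Neumann trace of $\eta^1$ appears precisely because $2(1-1/p)>1+1/q$, i.e.\ under the standing assumption $\frac1p+\frac1{2q}<\frac12$. These conditions are necessary for any solution in $\mathcal{X}_\eta$: the boundary traces of $\eta(t)$ and $\partial_t\eta(t)$ vanish for a.e.\ $t$ and are continuous in time. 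They must therefore be imposed as hypotheses rather than handled ``up to''. The statement as printed omits them, and for data violating them the existence claim fails.
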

\begin{Remark}
We recall  the embeddings $B_{qp}^{2(2-1/p)}(\omega)\hookrightarrow W^{3,q}(\omega)$ and $B_{qp}^{2(1-1/p)}(\omega)\hookrightarrow W^{1,q}(\omega)$. Therefore, 
$\eta\in C([0,T];W^{3,q}(\omega))$, $\partial_t\eta\in C([0,T];W^{1,q}(\omega))$. In addition, due to \eqref{1118} we have 
$$
\|\eta\|_{C([0,T];W^{3,q}(\omega))}+\|\partial_t\eta\|_{C([0,T];W^{1,q}(\omega))}\leq C\left(\|\eta^0\|_{B_{qp}^{2(2-1/p)}(\omega)}+\|\eta^1\|_{B_{qp}^{2(1-1/p)}(\omega)}+\|g\|_{L^p(0,T;L^q(\Omega))}\right) .   
$$
\end{Remark}

\subsection{The Lamé system}

Let us now study the Lamé system with 
nonhomogeneous boundary condition
\begin{equation}\label{heat}
		\left\{
		\begin{array}{cc}
			\rho^0\partial_t v - \nabla\cdot \left(\mathbb{S}(v)\right) = f&\quad  \text {in } (0,T) \times \Omega, \vspace{0.15cm} \\
				v_n = 0,\quad \quad \left[2\mu\mathbb{D}(v)n +\beta_0 v\right] \cdot \tau^j=0, \ \ j=1,2, & \text{ on } (0,T)\times\Gamma_{0},  \vspace{0.15cm}\\
			\left(v-\partial_t \eta e_3 \right)_{n}=0,\quad \left[2\mu\mathbb{D}(v)n +\beta_1 (v-\partial_t\eta e_3)\right] \cdot \tau^j = k_j, \ \ j=1,2, &\text{ on } (0,T)\times\Gamma (\eta^0),  \vspace{0.15cm}\\
			 v(0)=v^0 & \text{ in } \Omega.
		\end{array}
		\right.
	\end{equation}
First, we construct the lifting $w$ that is defined by $$w(t,\cdot)=\theta(\cdot) \partial_t\eta(t,\cdot)e_3,
$$
where $\theta$ is given in \eqref{theta}. Considering $u=v-w$ leads to prove the existence of strong solution to  the following system
\begin{equation}
\label{LameH}
\left\{
\begin{array}{cc}
\rho^0\partial_t u - \nabla\cdot \left(\mathbb{S}(u)\right) = \mathfrak{F} &\text {in } (0,T) \times \Omega,  \vspace{0.15cm} \\
u_n = 0 \quad  & \text{ on } (0,T)\times\partial\Omega,  \vspace{0.15cm}\\
\left[2\mu\mathbb{D}(u)n +\beta u \right] \cdot \tau^j = \mathfrak{K}_j \ \ j=1,2, & \text{ on } (0,T)\times\partial\Omega,  \vspace{0.15cm}\\
u(0)=u^0 & \text{ in } \Omega,
\end{array}
\right.
\end{equation}
where $\beta$ is defined as
$$
\beta=
\left\{
\begin{array}{cc}
	\beta_0 & \text{ on } \Gamma_{0},\\
	\beta_1 & \text{ on } \Gamma(\eta^0).\\
\end{array}\right.
$$  
The unique existence of solution to \eqref{LameH} is proved in \cite[Theorem 7.2]{zbMATH06298655} for 
$$
\mathfrak{F}\in L^p(0,T;[L^q(\Omega)]^3),\quad \mathfrak{K}=(\mathfrak{K}_1,\mathfrak{K}_2)\in L^p(0,T;[W^{1,q}(\Omega)]^2)\cap W^{1,p}(0,T;[\mathscr{W}^{-1,q}(\Omega)]^2),
$$
where $\mathfrak{K}$ is considered as a function in $\Omega$ with time derivative in $\mathscr{W}^{-1,q}(\Omega)$. This avoids using a nonlocal operator in time (see \cite{zbMATH06436098}).
Thus, we get the existence of strong solution to the system \eqref{heat}.
\begin{Theorem}
Suppose that the condition  \eqref{1635} holds and let 
$$f\in L^p(0,T; [L^q(\Omega)]^3),\quad k:=(k_1,k_2)\in L^p(0,T;[W^{1,q}(\Omega)]^2)\cap W^{1,p}(0,T;[\mathscr{W}^{-1,q}(\Omega)]^2),
$$
$$ v^0\in [B_{pq}^{2(1-1/p)}(\Omega)]^3,$$ where we have extended $k_j$, $j=1,2$, in $\Omega$. Then, the system \eqref{heat} admits a unique solution $v\in \mathcal{X}_v$ such that
\begin{multline}
\label{1010}
\|v\|_{\mathcal{X}_v}\leq C\bigg( \|v^0\|_{[B_{pq}^{2(1-1/p)}(\Omega)]^3}+ \|f\|_{L^p(0,T;[L^q(\Omega)]^3)}+ \|g\|_{L^p(0,T;L^q(\omega))}\\+\|k\|_{L^p(0,T;[W^{1,q}(\Omega)]^2)\cap W^{1,p}(0,T;[\mathscr{W}^{-1,q}(\Omega)]^2)}\bigg). 
\end{multline}
\end{Theorem}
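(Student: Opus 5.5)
The plan is to reduce \eqref{heat} to \eqref{LameH} through the lifting $w=\theta\,\partial_t\eta\, e_3$ and then apply \cite[Theorem 7.2]{zbMATH06298655}. Here $\eta$ is the solution of the plate equation \eqref{plate} with right-hand side $g$, so \eqref{1118} controls $\partial_t\eta$ in $L^p(0,T;W^{2,q}(\omega))\cap W^{1,p}(0,T;L^q(\omega))$. This is why $g$ appears in \eqref{1010}.

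First we check that $w$ has the required regularity. We extend $\partial_t\eta$ to be constant in $y_3$ (then cut off by $\theta$) near $\Gamma(\eta^0)$. Since $\theta\in W^{2,q}$ only through $\eta^0\in W^{3,q}(\omega)\hookrightarrow C^2$, it is smooth enough, and we get $w\in\mathcal{X}_v$ with
\[
\|w\|_{\mathcal{X}_v}\le C\|\partial_t\eta\|_{L^p(0,T;W^{2,q}(\omega))\cap W^{1,p}(0,T;L^q(\omega))}.
\]
Moreover $w=\partial_t\eta\, e_3$ on $\Gamma(\eta^0)$ (because $\theta=1$ there) and $w=0$ on $\Gamma_0$ (because $\theta=0$ there). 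Hence $u=v-w$ satisfies $u_n=0$ on all of $\partial\Omega$.

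Next we compute the data of \eqref{LameH}. The interior force is $\mathfrak F=f-\rho^0\partial_t w+\nabla\cdot\mathbb S(w)\in L^p(0,T;L^q)$, since $\rho^0\in W^{2,q}\hookrightarrow L^\infty$. The boundary data are $\mathfrak K_j=k_j-2\mu\mathbb D(w)n\cdot\tau^j$ on $\Gamma(\eta^0)$, where the $\beta_1$ terms cancel because $v-\partial_t\eta e_3=u$ there, and $\mathfrak K_j=0$ on $\Gamma_0$. Extending $n,\tau^j$ smoothly into $\Omega$, the term $\mathbb D(w)n\cdot\tau^j$ lies in $L^p(0,T;W^{1,q})$, since it involves first derivatives of $w\in L^p(W^{2,q})$. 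Its time derivative is $\mathbb D(\partial_t w)n\cdot\tau^j$, a first derivative of a function in $L^p(L^q)$. This lies in $L^p(\mathscr W^{-1,q})$ thanks to the property $\|(\mathbb I-\Delta)^{-1/2}\imath\nabla u\|_{L^q}\le\|u\|_{L^q}$ of the extension operator. The product with the $W^{1,\infty}$ coefficients $n,\tau^j$ is handled by rewriting $a\,\partial_i z=\partial_i(az)-(\partial_i a)z$.

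For the initial datum, $u^0=v^0-\theta\eta^1 e_3\in B^{2(1-1/p)}_{qp}$, and the compatibility condition $u^0_n=0$ follows from \eqref{initialD}. Applying \cite[Theorem 7.2]{zbMATH06298655} then gives $u$, and $v=u+w$ together with the estimate \eqref{1010} follows by the triangle inequality, \eqref{1118}, and the bounds above.

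Uniqueness follows from the linearity of \eqref{heat} together with uniqueness for \eqref{LameH}. The main obstacle is the $W^{1,p}(0,T;\mathscr W^{-1,q})$ bound on $\mathfrak K$, in particular verifying that multiplication by the variable boundary frame (depending on $\eta^0$) preserves $\mathscr W^{-1,q}$. We would handle this using the divergence-form rewriting described above.
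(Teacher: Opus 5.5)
Your proposal follows the paper's route exactly: lift by $w=\theta\,\partial_t\eta\,e_3$, reduce $u=v-w$ to \eqref{LameH}, and invoke \cite[Theorem 7.2]{zbMATH06298655}; it also supplies details the paper leaves implicit (the regularity of $w$, the explicit data $\mathfrak F$ and $\mathfrak K_j=k_j-2\mu\mathbb D(w)n\cdot\tau^j$, and the $\mathscr W^{-1,q}$ bound on $\partial_t\mathfrak K$ via the divergence-form rewriting). Two caveats are shared with the paper rather than specific to you: through \eqref{1118} the estimate you actually obtain also contains $\|\eta^0\|_{B^{2(2-1/p)}_{qp}(\omega)}+\|\eta^1\|_{B^{2(1-1/p)}_{qp}(\omega)}$, which \eqref{1010} omits (and cannot omit), and since $\frac1p+\frac1{2q}<\frac12$ makes the boundary trace of $\nabla u$ continuous in time, the cited maximal-regularity result also requires the tangential compatibility condition $[2\mu\mathbb D(u^0)n+\beta u^0]\cdot\tau^j=\mathfrak K_j(0)$ on $\partial\Omega$, which is not among the hypotheses.
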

\subsection{Transport equation}
Finally, we consider the transport equation
\begin{equation}
\label{transport}
\left\{
\begin{array}{cc}	\partial_t\rho+\frac{1}{\delta_{\widehat{X}}}(\widehat{v}-\partial_t\widehat{X})\nabla \rho+\rho^0\nabla\cdot v=h\quad  & \text {in } (0,T) \times \Omega,\\
\rho(0)=\rho^0  & \text {in } \Omega.\\
\end{array}
\right.
\end{equation}
Since 
	$$
	\partial_t\widehat{X}=
	\left\{
	\begin{array}{cc}
		0& \text{ on }\Gamma_{0},\\
		\partial_t\widehat{\eta} e_3& \text{ on }\Gamma(\eta^0),\\
	\end{array}
	\right.
	$$
	we get $(\widehat{v}-\partial_t\widehat{X})\cdot n=0$ on $\partial\Omega$. Then we can solve the transport equation using Proposition \ref{masseq}. In particular, we have the following result. 
\begin{Theorem}
Let
$$h\in L^1(0,T;W^{1,q}(\Omega)),\quad \text{and} \quad  \rho^0\in W^{1,q}(\Omega).$$ 
Then, there exists a constant $C>0$ such that the  equation \eqref{transport} admits a unique solution $\rho\in \mathcal{X}_\rho$
with 
\begin{equation}
\label{1017}
\|\rho\|_{L^\infty(0,T;W^{1,q}(\Omega))}\leq C\left( \|\rho^0\|_{W^{1,q}(\Omega)}+ \|h\|_{L^1(0,T;W^{1,q}(\Omega))}\right).
\end{equation}
Moreover,
\begin{equation}
	\label{1440}
	\|\rho\|_{\mathcal{X}_\rho}\leq C\left( \|\rho^0\|_{W^{1,q}(\Omega)}+ \|h\|_{L^p(0,T;W^{1,q}(\Omega))}\right).
\end{equation}
	\end{Theorem}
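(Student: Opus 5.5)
We treat \eqref{transport} as a linear transport equation with the given velocity field $a:=\frac{1}{\delta_{\widehat{X}}}(\widehat{v}-\partial_t\widehat{X})$ and source $\widetilde h := h-\rho^0\nabla\cdot v$. Here $v$ is regarded as already known, coming from the cascade: the Lamé system was solved first, so $v\in\mathcal{X}_v$. The plan is to invoke Proposition \ref{masseq} (the transport result in the appendix). To do so we must check its hypotheses: $a\in L^1(0,T;W^{1,\infty}(\Omega))$, or at least $L^p(0,T;W^{2,q}(\Omega))$, and $a\cdot n=0$ on $\partial\Omega$.

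\textbf{The field $a$.} The condition $a\cdot n=0$ was observed just before the statement, using $\partial_t\widehat X=0$ on $\Gamma_0$ and $\partial_t\widehat X=\partial_t\widehat\eta e_3$ on $\Gamma(\eta^0)$, together with the constraints imposed on $\widehat v$. For regularity, we have $\widehat v\in L^p(0,T;W^{2,q})\hookrightarrow L^p(0,T;W^{1,\infty})$ since $q>3$. From \eqref{lag}, $\partial_t\widehat X=\theta\,\partial_t\widehat\eta\, e_3$ with $\partial_t\widehat\eta\in L^p(0,T;W^{2,q}(\omega))$. Moreover $\delta_{\widehat X}=1+\theta'(\widehat\eta-\eta^0)$ is bounded below and lies in $C([0,T];W^{2,q})$, because $\widehat\eta\in C([0,T];W^{3,q}(\omega))$ and $\theta\in W^{2,q}$ when $\eta^0\in W^{3,q}$. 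Hence $a\in L^p(0,T;W^{2,q}(\Omega))\subset L^1(0,T;W^{1,\infty}(\Omega))$, with norm controlled by $\|\widehat v\|_{\mathcal X_v}+\|\widehat\eta\|_{\mathcal X_\eta}$.

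\textbf{Estimate \eqref{1017}.} Existence and uniqueness of $\rho\in C([0,T];W^{1,q})$ follow from Proposition \ref{masseq}, via characteristics of the Lipschitz field $a$, which stay in $\overline\Omega$ since $a\cdot n=0$. For the a priori bound we differentiate the equation, multiply by $|\nabla\rho|^{q-2}\nabla\rho$, and integrate. The boundary terms vanish because $a\cdot n=0$. This gives
\[
\frac{d}{dt}\|\rho\|_{W^{1,q}}\le C\|a\|_{W^{1,\infty}}\|\rho\|_{W^{1,q}}+\|\widetilde h\|_{W^{1,q}},
\]
and Gronwall yields the bound with constant $\exp(C\|a\|_{L^1W^{1,\infty}})$. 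This constant depends only on the (bounded) norms of $(\widehat v,\widehat\eta)$, which is what $C$ is allowed to depend on. Note that $\nabla\cdot v\in L^p(0,T;W^{1,q})$ and $\rho^0\in W^{2,q}$, so $\|\rho^0\nabla\cdot v\|_{L^1W^{1,q}}\le C T^{1/p'}\|v\|_{\mathcal X_v}$; this term can be absorbed into the $h$-type contribution, as the stated estimate implicitly does.

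\textbf{Estimate \eqref{1440}.} It remains to bound $\partial_t\rho$ in $L^p(0,T;L^q)$. We read it off the equation: $\partial_t\rho=\widetilde h-a\cdot\nabla\rho$. Then
\[
\|a\cdot\nabla\rho\|_{L^pL^q}\le \|a\|_{L^pL^\infty}\|\rho\|_{L^\infty W^{1,q}},
\]
and $\|\widetilde h\|_{L^pL^q}\le\|h\|_{L^pW^{1,q}}+C\|v\|_{\mathcal X_v}$. Combining this with \eqref{1017} and Hölder's inequality $L^p\subset L^1$ in time gives \eqref{1440}.

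The main obstacle is the regularity of the coefficient $a$, specifically the $W^{1,\infty}$ control of $\delta_{\widehat X}^{-1}$ and $\partial_t\widehat X$. This requires differentiating $\theta(y)=\alpha(y_3-\eta^0)$, which uses $\eta^0\in W^{3,q}\hookrightarrow C^2$, and a lower bound on $\delta_{\widehat X}$. The lower bound holds only while $\|\theta'\|_\infty\|\widehat\eta-\eta^0\|_\infty<1/2$, so it is guaranteed only for small $T$, or equivalently within the fixed-point ball of \cref{fp}.
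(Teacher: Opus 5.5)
Your proposal is correct and follows the paper's route. The paper likewise only checks that $(\widehat v-\partial_t\widehat X)\cdot n=0$ on $\partial\Omega$ and invokes Proposition \ref{masseq}, leaving the $W^{1,p}(0,T;L^q(\Omega))$ bound to be read off the equation as you do; your check that $\delta_{\widehat X}^{-1}(\widehat v-\partial_t\widehat X)\in L^1(0,T;W^{1,\infty}(\Omega))$ (with $\delta_{\widehat X}$ bounded below) and your remark that $\rho^0\nabla\cdot v$ must be counted on the right-hand side of \eqref{1017}--\eqref{1440} supply details the paper leaves implicit.
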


	\section{Existence and uniqueness of solution} 
	\label{fp} This section is devoted to prove the existence and uniqueness of strong solution to the system \eqref{fs1}--\eqref{ic1}, equivalently to the  fluid-structure interaction system \eqref{fs}--\eqref{ic}.
    
	\subsection{Local-in-time existence}\label{Sec-Local-exis}

    The proof is based on the Tikhonov’s fixed-point theorem. 
    \begin{Theorem}[Tikhonov]\label{Thm-Fixed-Point}
       Let $E$ be a separable reflexive Banach space and $K$ is a nonempty closed convex subset of $E$. Let $\Lambda$ is a self-map on $K$. If in addition  $\Lambda$ satisfies the weak continuity property:
       \begin{align*}
        y_n \to y \ \text{ weakly in } E \ \  \Longrightarrow \ \ \Lambda(y_n) \to \Lambda(y) \ \text{ weakly in } E ,
       \end{align*}
       then, $\Lambda$ admits at least one fixed point.  
    \end{Theorem}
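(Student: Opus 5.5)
The plan is to reduce the statement to the classical Schauder--Tychonoff fixed point theorem for locally convex spaces, applied to $E$ equipped with its weak topology. First, a caveat: as written, the statement needs $K$ to be \emph{bounded}. On $E=\mathbb{R}$ with $K=\mathbb{R}$, the map $\Lambda(y)=y+1$ is weakly continuous and has no fixed point. In the application the set $K$ will be a closed ball, intersected with affine constraints on the initial data, of the form
$$
\{(\widehat{\rho},\widehat{v},\widehat{\eta}) : \|\widehat{\rho}\|_{\mathcal{X}_\rho}+\|\widehat{v}\|_{\mathcal{X}_v}+\|\widehat{\eta}\|_{\mathcal{X}_\eta}\leq R\}.
$$
So I will assume $K$ is nonempty, closed, convex and bounded.

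\emph{Step 1: compactness.} Since $K$ is convex and norm-closed, Mazur's theorem shows that $K$ is weakly closed. Since $E$ is reflexive, bounded weakly closed sets are weakly compact by Kakutani's theorem. Hence $K$ is a nonempty, convex, compact subset of the locally convex Hausdorff space $(E,\sigma(E,E'))$.

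\emph{Step 2: metrizability.} Because $E$ is separable and reflexive, $E'$ is also separable, since $E''=E$ is separable. Pick a dense sequence $(\varphi_k)$ in the unit sphere of $E'$ and set
$$
d(x,y)=\sum_k 2^{-k}|\langle \varphi_k,x-y\rangle|.
$$
On bounded sets, $d$ induces the weak topology. Consequently $(K,\text{weak})$ is a compact metric space, and sequential weak continuity of $\Lambda$ on $K$ is equivalent to continuity of $\Lambda:(K,\text{weak})\to(K,\text{weak})$. This is exactly the role of the hypothesis ``$y_n\rightharpoonup y\Rightarrow\Lambda(y_n)\rightharpoonup\Lambda(y)$''.

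\emph{Step 3: conclusion.} The Schauder--Tychonoff theorem says that a continuous self-map of a nonempty compact convex subset of a locally convex Hausdorff space has a fixed point. Applied to $\Lambda$ on $K$, it gives the result.

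Alternatively, to stay within metric arguments, I would prove Step 3 directly by Schauder's finite-dimensional approximation. For each $\varepsilon>0$, cover $K$ by finitely many $d$-balls of radius $\varepsilon$ centred at points $x_i$. Use a partition of unity subordinate to this cover to build the Schauder projection $P_\varepsilon$ onto $\mathrm{conv}\{x_i\}$. Brouwer's theorem then gives $x_\varepsilon=P_\varepsilon\Lambda x_\varepsilon$. Extract a weakly convergent subsequence $x_\varepsilon\rightharpoonup x\in K$, and pass to the limit using $d(P_\varepsilon z,z)\leq\varepsilon$, which holds because $d$ comes from seminorms and its balls are convex.

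The main point requiring care is Step 2, namely identifying sequential weak continuity with genuine weak continuity. This is precisely where separability is used, and it is why the theorem is stated for separable reflexive spaces. In the application, the separable reflexive space $E$ will be a product of $L^p$--$W^{k,q}$-type spaces.
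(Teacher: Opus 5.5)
Your proof is correct, and so is your caveat. As printed, the statement omits boundedness of $K$, and without it the conclusion fails: your example $\Lambda(y)=y+1$ on $E=K=\mathbb{R}$ is a valid counterexample. The set $\mathcal{B}_{R,T}$ to which the paper applies the theorem is bounded. Its extra constraints are the linear impermeability conditions and the ball $\|\widehat{\rho}-\rho^0\|_{L^\infty(0,T;W^{1,q}(\Omega))}\leq RT^{1/p'}$, all closed and convex, so the missing hypothesis does hold there.

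The paper gives no proof of this theorem; it quotes it as a classical black box, so there is no argument of the paper to compare with. Your reduction is the standard proof, and it makes explicit which hypotheses are actually used:
\begin{itemize}
\item Mazur plus Kakutani make $K$ weakly compact.
\item Separability of $E'$, obtained from $E''=E$, metrizes the weak topology on $K$. This turns the merely sequential hypothesis into genuine weak continuity.
\item Then Schauder--Tychonoff applies. Alternatively, Schauder projections plus Brouwer work, because $d$ is induced by the norm $u\mapsto\sum_k 2^{-k}|\langle\varphi_k,u\rangle|$, so $d$-balls are convex.
\end{itemize}
In your second route, the final limit passage only needs that $(\varphi_k)$ separates the points of $E$, which it does.

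Two side remarks. First, separability is a convenience rather than a necessity. Using Eberlein--\v{S}mulian, one can show that a weakly sequentially continuous self-map of a weakly compact convex subset of any Banach space has a fixed point (Arino--Gautier--Penot). So ``this is precisely where separability is used'' is true of your argument but not of the result.

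Second, your closing sentence about the application is optimistic. The norm $\|\cdot\|_{\mathcal{X}}$ used in the paper contains $C([0,T];W^{1,q}(\Omega))$ and $C([0,T];B^{2(1-1/p)}_{qp}(\Omega))$ components, and these spaces are not reflexive. To invoke the theorem, you must take for $E$ only the reflexive part, e.g.\ $W^{1,p}(0,T;L^q(\Omega))\cap L^p(0,T;W^{1,q}(\Omega))$ for the density. The sup-in-time bounds should then be kept only as convex constraints defining $K$, in $L^\infty$ form, which is closed in $E$.
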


	Let 
	$$
	\Lambda: (\widehat{\rho},\widehat{v},\widehat{\eta}) \longrightarrow (\rho,v,\eta),
	$$
	be the function that associates for each $(\widehat{\rho},\widehat{v},\widehat{\eta})$ the unique solution $ (\rho,v,\eta)$ to the linear problem \eqref{fs1*}--\eqref{bc1*} with initial data \eqref{initialD}. 
	Let us consider the norms
	$$
	\|(\rho,v,\eta)\|_{\mathcal{X}}=\|\rho\|_{\mathcal{X}_\rho}+\|v\|_{\mathcal{X}_v}+\|\eta\|_{\mathcal{X}_\eta},  
	$$
	$$
	\|(\rho^0,v^0,\eta^0,\eta^1)\|_{\mathcal{X}^0}=	\|\rho^0\|_{W^{2,q}(\Omega)}+\|v^0\|_{[B_{qp}^{2(1-1/p)}(\Omega)]^3}+ \|\eta^0\|_{B_{qp}^{2(2-1/p)}(\omega)}+ \|\eta^1\|_{B_{qp}^{2(1-1/p)}(\omega)} 
	$$
	and,  define the set
\begin{align*}
	\mathcal{B}_{R,T}=\Big\{(\widehat{\rho},\widehat{v},\widehat{\eta}) \, : \, \, \widehat{v}_n=0\ \text{ on }\Gamma_{0},\ (\widehat{v}-\partial_t\widehat{\eta}e_3)_n=0\ \text{ on } \Gamma(\eta^0), \  \|(\widehat{\rho},\widehat{v},\widehat{\eta})\|_{\mathcal{X}}\leq R,\\ \|\widehat{\rho}-\rho^0\|_{L^\infty(0,T;W^{1,q}(\Omega))}\leq RT^{1/p'}\Big\}.
\end{align*}
To apply Theorem \ref{Thm-Fixed-Point} for the local-in-time existence of strong solution, we need to check that
\begin{itemize}
\item $\Lambda$ maps $\mathcal{B}_{R,T}$ to itself for $T$ small enough,
and 
\item $\Lambda$ is weakly continuous. 
\end{itemize}

Let $(\rho^0,v^0,\eta^0,\eta^1)$ satisfying \eqref{initialD} and $(\widehat{\rho},\widehat{v},\widehat{\eta})$ be any function in $\mathcal{B}_{R,T}$. The corresponding solution $(\rho,v,\eta)$ of the linear problem \eqref{fs1*}--\eqref{bc1*} verifies the estimate
\begin{multline*}
		\|(\rho,v,\eta)\|\leq C\bigg(	\|(\rho^0,v^0,\eta^0,\eta^1)\|_{\mathcal{X}^0}+\|\mathbb{H}_{\widehat{X}}(\widehat{\rho},\widehat{v},\widehat{\eta})\|_{L^1(0,T;W^{1,q}(\Omega))}+\|\mathbb{F}_{\widehat{X}}(\widehat{\rho},\widehat{v},\widehat{\eta})\|_{L^p(0,T;[L^q(\Omega)]^3)}\\+ \|\mathbb{K}_{\widehat{X}}(\widehat{\rho},\widehat{v},\widehat{\eta})\|_{L^p(0,T;[W^{1,q}(\Omega)]^2)\cap W^{1,p}(0,T;[\mathscr{W}^{-1,q}(\Omega)]^2)} \bigg),  
\end{multline*}
where $\mathbb H_{X}$, $\mathbb F_X$ and $\mathbb K_X$ are defined in \eqref{H}, \eqref{F} and \eqref{K} respectively. 
We further   assume that $T<1$ and $R$ is sufficiently large to have
$$
C\|(\rho^0,v^0,\eta^0,\eta^1)\|_{\mathcal{X}^0}\leq R/2.
$$

\vspace{.5cm}

$\bullet$ 
We now  show that for $T$ small enough, $\Lambda$ maps $\mathcal{B}_{R,T}$ into itself. Then, it demands to obtain that
\begin{multline}
	\label{1731}
	C\bigg( \|\mathbb{H}_{\widehat{X}}(\widehat{\rho},\widehat{v},\widehat{\eta})\|_{L^1(0,T;W^{1,q}(\Omega))}+\|\mathbb{F}_{\widehat{X}}(\widehat{\rho},\widehat{v},\widehat{\eta})\|_{L^p(0,T;[L^q(\Omega)]^3)}\\+ \|\mathbb{K}_{\widehat{X}}(\widehat{\rho},\widehat{v},\widehat{\eta})\|_{L^p(0,T;[W^{1,q}(\Omega)]^2)\cap W^{1,p}(0,T;[\mathscr{W}^{-1,q}(\Omega)]^2)}\bigg) \leq R/2,
\end{multline}
 for $T$ small enough. 
 
In the sequel, we  denote by $C_R$ a constant of the form $CR^\alpha$, with $\alpha>1$. We now  prove the inequality  \eqref{1731}. First, we note that the following estimates hold
	$$
	\|\delta_{\widehat{X}}-1\|_{L^\infty(0,T;W^{1,q}(\Omega))}\leq C\|\widehat{\eta}-\eta^0\|_{L^\infty(0,T;W^{1,q}(\omega))}\leq CT \|\partial_t\widehat{\eta}\|_{L^\infty(0,T;W^{1,q}(\omega))}\leq C_RT . 
	$$
	From the regularity of $\widehat{\eta}$, we have also
	$$
	\|\widehat{\eta}-\eta^0\|_{L^\infty(0,T;W^{2,q}(\omega))}\leq C_RT^{1/p'}.
	$$
	Hence,
	\begin{multline}
		\label{1327}
		\|\widehat{X}-\mathbb{I}_3\|_{L^\infty(0,T;[W^{2,q}(\Omega)]^3)}\leq C_RT^{1/p'},\quad \|b_{\widehat{X}}-\mathbb{I}_3\|_{L^\infty(0,T;[W^{1,q}(\Omega)]^9)}\leq C_RT^{1/p'},\\
		 \left\|\frac{\partial (b_{\widehat{X}})_{ij}}{\partial x_k}\right\|_{L^\infty(0,T;[L^q(\Omega)]^9)}\leq C_RT^{1/p'}.
	\end{multline}
From the definition of $\mathcal{B}_{R,T}$, we have also 
\begin{equation}
	\label{1639}
		\|\widehat{\rho}-\rho^0\|_{L^\infty(0,T;W^{1,q}(\Omega))}\leq T^{1/p'}R.
\end{equation}

Further, we have
	\begin{multline}
		\left\|\left(\frac{\widehat{\rho }}{\delta_{\widehat{X}}}-\rho^0\right)\nabla\cdot  \widehat{v}\right\|_{L^p(0,T;W^{1,q}(\Omega))} \leq \left\|\frac{1}{\delta_{\widehat{X}}}\right\|_{L^\infty(0,T;W^{1,q}(\Omega))} \|\widehat{\rho }-\rho^0\|_{L^\infty(0,T;W^{1,q}(\Omega))}\|\nabla\cdot \widehat{v}\|_{L^p(0,T;W^{1,q}(\Omega))}\\+\|\rho^0\|_{W^{1,q}(\Omega)}\left\|\frac{1}{\delta_{\widehat{X}}}-1\right\|_{L^\infty(0,T;W^{1,q}(\Omega))} \|\nabla\cdot \widehat{v}\|_{L^p(0,T;W^{1,q}(\Omega))}\leq C_RT^{1/p'},
	\end{multline}
where we have used the fact that $W^{1,q}(\Omega)$ is an algebra since $q>3$. Thus, we get (see the definition of $\mathbb H_{X}$ in \eqref{H}) 
	\begin{equation}
		\label{1218}
		\|\mathbb{H}_{\widehat{X}}(\widehat{\rho},\widehat{v}, \widehat \eta)\|_{L^p(0,T;W^{1,q}(\Omega))}\leq C_RT^{1/p'}.
	\end{equation}
    
	Let us now estimate $\mathbb{F}_{\widehat{X}}$ defined in \eqref{F}. First, we deal with the term $ \mathbb{S}_{\widehat{X}}(\widehat{v})-\mathbb{S}(\widehat{v})$ that contains terms such as
	$$
	\left( (b_{\widehat{X}})_{ik}^\top-\delta_{ik}\right) \frac{(b_{\widehat{X}})_{lm}^\top}{\delta_{\widehat{X}}}(\nabla\widehat{v})_{kl}(b_{\widehat{X}})_{mj},\quad \frac{\partial (b_{\widehat{X}}^{-\top})_{ik}}{\partial x_m}(\widehat{X})\widehat{v}_k(b_{\widehat{X}})_{mj}.
	$$
	Since $\nabla\widehat{v}\in L^p(0,T;[W^{1,q}(\Omega)]^9)$,  using \eqref{1327}, we obtain 
	\begin{equation*}
		\left\|\left( (b_{\widehat{X}})_{ik}^\top-\delta_{ik}\right) \frac{(b_{\widehat{X}})_{lm}^\top}{\delta_{\widehat{X}}}(\nabla\widehat{v})_{kl}(b_{\widehat{X}})_{mj}\right\|_{L^p(0,T;W^{1,q}(\Omega))}\leq C_RT^{1/p'}. 
	\end{equation*}
Now, since  $\widehat{\eta}\in C([0,T]; W^{3,q}(\omega))$, $\frac{\partial (b_{\widehat{X}}^{-\top})_{ik}}{\partial x_m}(\widehat{X})\in L^\infty(0,T;W^{1,q}(\Omega))$, and    $\widehat{v}\in C([0,T];[W^{1,q}(\Omega)]^3)$, we obtain
	\begin{equation}
		\left\|\frac{\partial (b_{\widehat{X}}^{-\top})_{ik}}{\partial x_m}(\widehat{X})\widehat{v}_k(b_{\widehat{X}})_{mj}\right\|_{L^p(0,T;W^{1,q}(\Omega))}\leq C_RT^{1/p}.
	\end{equation}
	This yields that 
	\begin{equation}
		\label{1704}
		\|\nabla \cdot (\mathbb{S}_{\widehat{X}}(\widehat{v})-\mathbb{S}(\widehat{v}))\|_{L^p(0,T;[L^q(\Omega)]^3)}\leq C_R(T^{1/p}+T^{1/p'}).
	\end{equation}
	Let us  estimate the time derivative terms:
	\begin{equation}
		\label{1422}
		\widehat{\rho}(\nabla \widehat{X}) \partial_t \widehat{v}-\rho^0\partial_t \widehat{v},\quad (\nabla \widehat{X})(\nabla \widehat{v})\partial_t \widehat{Y}(\widehat{X}),\quad \delta_{\widehat{X}}\partial_t (b_{\widehat{X}}^{-\top})(\widehat{X}) \widehat{v}.
	\end{equation}
For  the first term, we have 
	\begin{align}
		\label{1702}
		&\|\widehat{\rho}(\nabla \widehat{X}) \partial_t \widehat{v}-\rho^0\partial_t \widehat{v}\|_{L^p(0,T;[L^q(\Omega)]^3)} \notag \\ 
        & \leq \|\widehat{\rho}-\rho^0\|_{L^\infty(0,T;W^{1,q}(\Omega))}\|\nabla \widehat{X} \|_{L^\infty(0,T;[W^{1,q}(\Omega)]^9)}\|\partial_t \widehat{v}\|_{L^p(0,T;[L^q(\Omega)]^3)} \\
        & \quad +
		\|\rho^0\|_{W^{1,q}(\Omega)}\|\nabla \widehat{X}-\mathbb{I}_3 \|_{L^\infty(0,T;[W^{1,q}(\Omega)]^9)}\|\partial_t \widehat{v}\|_{L^p(0,T;[L^q(\Omega)]^3)}\notag \\
        & \leq C_RT^{1/p'}.
	\end{align}  
	The other two terms in  \eqref{1422} are in $L^\infty(0,T;[L^q(\Omega)]^3)$, and thus,
	\begin{equation}
		\label{1707}
		\| (\nabla \widehat{X})(\nabla \widehat{v})\partial_t \widehat{Y}(\widehat{X})\|_{L^p(0,T;[L^q(\Omega)]^3)}+\| \delta_{\widehat{X}}\partial_t (b_{\widehat{X}}^{-\top})(\widehat{X}) \widehat{v}\|_{L^p(0,T;[L^q(\Omega)]^3)}\leq C_RT^{1/p}.
	\end{equation}
	Next,  we need to handle the convective term $\mathbb{N}_{\widehat{X}}$ (appearing in the definition \eqref{F}) that contains especially the terms like 
	$$
	\widehat{\rho}\frac{\partial (b_{\widehat{X}}^{-\top})_{ik}}{\partial x_j}(\widehat{X}) \frac{\partial \widehat{X}_j}{\partial y_m} \widehat{v}_k \widehat{v}_m
	,\quad \widehat{\rho}(b_{\widehat{X}}^{-\top})_{i,k}(\widehat{X})\frac{\partial \widehat{v}_k}{\partial y_l} \widehat{v}_l.
	$$
	We see that the terms above are in $L^\infty(0,T;[L^q(\Omega)]^3)$, hence
	\begin{equation}
		\label{1655}
		\|\mathbb{N}_{\widehat{X}}(\widehat{\rho},\widehat{v},\widehat{\eta})\|_{L^p(0,T;[L^q(\Omega)]^3)}\leq C_RT^{1/p}. 
	\end{equation}
	Finally, we estimate the pressure term
	$\gamma \widehat{\rho}^{\gamma-1}b_{\widehat{X}} \nabla \widehat{\rho}$,
	which  is in $L^\infty(0,T;[L^q(\Omega)]^3)$, and  thus,
	\begin{equation}
		\label{1700}
		\|\gamma \widehat{\rho}^{\gamma-1}b_{\widehat{X}} \nabla \widehat{\rho}\|_{L^p(0,T;[L^q(\Omega)]^3)}\leq C_RT^{1/p}.
	\end{equation}
	Gathering \eqref{1704}, \eqref{1702}, \eqref{1707}, \eqref{1655} and \eqref{1700}, we obtain that
	\begin{equation}
		\label{1710}
		\|\mathbb{F}_{\widehat{X}}(\widehat{\rho},\widehat{v},\widehat{\eta})\|_{L^p(0,T;[L^q(\Omega)]^3)}\leq C_R(T^{1/p}+T^{1/p'}).
	\end{equation}

	Hereafter, by using the trace theorem (see Remark \ref{1051}) with the embedding $W^{1-1/q,q}(\partial\Omega)\hookrightarrow L^\infty(\partial\Omega)$, we obtain 
	$$
	\|\mathbb{G}_{\widehat{X}}(\widehat{\rho},\widehat{v},\widehat{\eta})\|_{L^p(0,T;L^q(\omega))}\leq C_RT^{1/p}.
	$$
    
	Finally, we  deal with the boundary nonlinear term $\mathbb K$ defined in \eqref{K} that will be seen as a function in $\Omega$. Note that we already have 
	$$
	\|[\mathbb{K}_{\widehat{X}}(\widehat \rho, \widehat{v},\widehat{\eta})]_j\|_{L^p(0,T;W^{1,q}(\Omega))}\leq C_R(T^{1/p}+T^{1/p'}).
	$$
	Now we shall estimate $\partial_t[\mathbb{K}_{\widehat{X}}]_j$ in $L^p(0,T;\mathscr{W}^{-1,q}(\Omega))$ where $\mathscr{W}^{-1,q}(\Omega)$ is defined in \eqref{1213}.  We rewrite $[\mathbb{K}_{\widehat{X}}]_j$ explicitly
    \begin{align} \label{1119}
    & [\mathbb{K}_{\widehat{X}}(\widehat{\rho},\widehat{v},\widehat{\eta})]_j \notag \\
    & 
=\mu\left[\left(\nabla \widehat{v}-\frac{|N|}{|N_{\widehat{\eta}}|}b_{\widehat{X}}^{-\top}\nabla \widehat {v}\nabla \widehat {Y}(\widehat {X})b_{\widehat{X}}\right)+ \left((\nabla \widehat{v})^\top - \frac{|N|}{|N_{\widehat{\eta}}|}b_{\widehat{X}}^{-\top}(\nabla \widehat{Y}(\widehat{X}))^\top (\nabla \widehat{v})^\top b_{\widehat{X}}^{-1}b_{\widehat{X}}\right)\right]n\cdot \tau^j \notag \\
        &-\mu 
		\sum_{m,k,i,l}\left(\frac{\partial (b_X^{-\top})_{ik}}{\partial x_m}(X)+\frac{\partial (b_X^{-\top})_{mk}}{\partial x_i}(X)\right) v_k (b_X)_{ml}n_l\tau_i^j \notag \\
       & +\beta_1\left[\left(\mathbb{I}_3-b_{\widehat{X}}^{-1}b_{\widehat{X}}^{-\top} \right)\widehat{v}+\left(b_{\widehat{X}}^{-1}-\mathbb{I}_3\right)\partial_t\widehat{\eta} e_3 \right] \cdot \tau^j.
    \end{align}
    For simplicity, it is enough to treat terms of the form
    \begin{equation}
        \label{1138}
    \left( \mathbb{I}_3-b_{\widehat{X}}^{-1}\right) \widehat{v}\cdot\tau^j,\quad   \frac{\partial (b_{\widehat{X}}^{-\top})_{ik}}{\partial x_m}(\widehat{X}) \widehat{v}_k(b_{\widehat{X}})_{ml},\quad (b_{\widehat{X}}^{-\top}-\mathbb{I}_3)_{ik}\frac{\partial Y_{l'}}{\partial x_m}\frac{\partial\widehat{v}_k}{\partial y_{l'}} (b_{\widehat{X}})_{ml}.  
    \end{equation}
    Let us  first consider with the term
	$$
	\left( \mathbb{I}_3-b_{\widehat{X}}^{-1}\right) \widehat{v}\cdot\tau^j,\quad j=1,2.
	$$
	From Lemma \ref{K.}, we have
	\begin{multline}
		\label{1705}
		\left\|\partial_t\left[\left( \mathbb{I}_3-b_{\widehat{X}}^{-1}\right) \widehat{v}\cdot\tau^j\right]\right\|_{L^p(0,T;\mathscr{W}^{-1,q}(\Omega))}\leq C\bigg[ \left( \int_0^T \|\partial_t(b_{\widehat{X}}^{-1})\|^p_{[L^q(\Omega)]^9} \|\widehat{v}\|^p_{[L^q(\Omega)]^3}\ dt\right)^{1/p}  \\+\left(  \int_0^T \|(b_{\widehat{X}}^{-1}-\mathbb{I}_3)\|^p_{[L^q(\Omega)]^9} \|\partial_t\widehat{v}\|^p_{[L^q(\Omega)]^3} \ dt \right)^{1/p} \bigg]. 
	\end{multline}
	But, we have $\partial_t(b_{\widehat{X}}^{-1})\in L^\infty(0,T;[L^q(\Omega)]^9)$ and $v\in L^\infty(0,T;[L^q(\Omega)]^3)$, so that 
	$$
	\left( \int_0^T \|\partial_t(b_{\widehat{X}}^{-1})\|^p_{[L^q(\Omega)]^9} \|\widehat{v}\|^p_{[L^q(\Omega)]^3}\ dt\right)^{1/p} \leq C_RT^{1/p}.
	$$
	Using \eqref{1327} the second term in the right hand side of \eqref{1705} gives
	$$
	\left(  \int_0^T \|(b_{\widehat{X}}^{-1}-\mathbb{I}_3)\|^p_{[L^q(\Omega)]^9} \|\partial_t\widehat{v}\|^p_{[L^q(\Omega)]^3} \ dt \right)^{1/p} \leq C_RT^{1/p'}.
	$$
	Therefore, 
	\begin{equation}
		\label{1718}
		\left\|\partial_t\left[\left( \mathbb{I}_3-b_{\widehat{X}}^{-1}\right) \widehat{v}\cdot\tau^j\right]\right\|_{L^p(0,T;\mathscr{W}^{-1,q}(\Omega))}\leq C_R(T^{1/p}+T^{1/p'}).
	\end{equation}
    
Now, we look at the term related to $\mathbb S_{\widehat X}(\widehat v)$   in \eqref{K}. Mainly,  we need to work with the terms of the form
	$$
\frac{\partial (b_{\widehat{X}}^{-\top})_{ik}}{\partial x_m}(\widehat{X}) \widehat{v}_k(b_{\widehat{X}})_{ml},\quad (b_{\widehat{X}}^{-\top}-\mathbb{I}_3)_{ik}\frac{\partial Y_{l'}}{\partial x_m}\frac{\partial\widehat{v}_k}{\partial y_{l'}} (b_{\widehat{X}})_{ml}.
	$$
Using \eqref{1550}, we have 
\begin{align*}
& \left\|\partial_t\left[ \frac{\partial (b_{\widehat{X}}^{-\top})_{ik}}{\partial x_m}(\widehat{X}) \widehat{v}_k(b_{\widehat{X}})_{ml}\right] \right\|_{L^p(0,T;\mathscr{W}^{-1,q}(\Omega))}  \\ 
&\leq C\bigg[\left(  \int_0^T\|\partial_tb_{\widehat{X}}^{-\top}\|_{[L^q(\Omega)]^9}^p\|\widehat{v}\|_{[W^{1,q}(\Omega)]^3}^p \|b_{\widehat{X}}\|_{[W^{1,q}(\Omega)]^9}^p \ dt\right)^{1/p} 
\\ 
& \ \ + \left(  \int_0^T\left\| \frac{\partial (b_{\widehat{X}}^{-\top})_{ik}}{\partial x_m}\right\|_{L^q(\Omega)}^p  \|\partial_t\left( \widehat{v}_k(b_{\widehat{X}})_{ml}\right) \|_{L^q(\Omega)}^p\ dt\right)^{1/p}\bigg] \\
& \leq C_R(T^{1/p}+T^{1/p'}) 
\end{align*}
and 
\begin{align*}
& \left\|\partial_t\left[ (b_{\widehat{X}}^{-\top}-\mathbb{I}_3)_{ik}\frac{\partial Y_{l'}}{\partial x_m}\frac{\partial\widehat{v}_k}{\partial y_{l'}} (b_{\widehat{X}})_{ml}\right] \right\|_{L^p(0,T;\mathscr{W}^{-1,q}(\Omega))} \\ 
& \leq C\bigg[\left(  \int_0^T\|\partial_t\widehat{v}\|_{[L^q(\Omega)]^3}^p\|(b_{\widehat{X}}^{-\top}-\mathbb{I}_3)_{ik}\frac{\partial Y_{l'}}{\partial x_m}(b_{\widehat{X}})_{ml}\|_{W^{1,q}(\Omega)}^p \ dt\right)^{1/p}
\\
& \ \ + \left(  \int_0^T\|\nabla v\|_{[L^q(\Omega)]^9}^p \|\partial_t\left( (b_{\widehat{X}}^{-\top}-\mathbb{I}_3)_{ik}\frac{\partial Y_{l'}}{\partial x_m}(b_{\widehat{X}})_{ml}\right)\|_{L^q(\Omega)}^p\ dt\right)^{1/p}\bigg] \\
& \leq C_R(T^{1/p}+T^{1/p'}).
\end{align*} 
Thus, we can say  that
\begin{align}\label{K-time-derivative}
\|[\mathbb{K}_{\widehat{X}}(\widehat \rho, \widehat{v},\widehat{\eta})]_j\|_{W^{1,p}(0,T;\mathscr{W}^{-1,q}(\Omega))}\leq C_R(T^{1/p}+T^{1/p'}).
\end{align}
Finally, from Remark \ref{1614} and the above estimates, one can ensure that there exists some $\gamma>0$ such that
$$
\|\rho-\rho^0\|_{L^\infty(0,T;W^{1,q}(\Omega))}\leq C_RT^{1/p'+\gamma}.
$$
Hence, for $T$ small enough, we get
$$
\|\rho-\rho^0\|_{L^\infty(0,T;W^{1,q}(\Omega))}\leq RT^{1/p'}.
$$
This yields that the mapping $\Lambda$ is 
well-defined and it  maps $\mathcal{B}_{R,T}$ into itself provided  $T$ is small enough.

\vspace{.5cm}
$\bullet$ Let us show the weak continuity of $\Lambda$ with respect to  the norm
\begin{multline}
\|(\rho,v,\eta)\|_{\mathcal{X}}=\|\rho\|_{C([0,T];W^{1,q}(\Omega))\cap W^{1,p}(0,T;L^q(\Omega))}+\|v\|_{ L^p(0,T;[W^{2,q}(\Omega)]^3)\cap W^{1,p}(0,T; [L^q(\Omega)]^3)}\\+\|\eta\|_{ L^p(0,T;W^{4,q}(\Omega))\cap W^{2,p}(0,T;L^q(\omega))}  . 
\end{multline}
Suppose that $(\widehat{\rho}_m,\widehat{v}_m,\widehat{\eta}_m)_m \subset \mathcal{B}_{R,T}$ converges weakly to $(\widehat{\rho},\widehat{v},\widehat{\eta})$ in norm $\|\cdot\|_{\mathcal{X}}$. We show that $(\rho_m,v_m,\eta_m)= \Lambda(\widehat{\rho}_m,\widehat{v}_m,\widehat{\eta}_m)_m$ converges weakly to $\Lambda(\widehat{\rho},\widehat{v},\widehat{\eta})=(\rho,v,\eta)$ in norm $\|\cdot\|_{\mathcal{X}}$.
In particular,  we have that 
$$
\widehat{\rho}_m\longrightarrow \widehat{\rho} \, \text{       strongly in } L^\infty(0,T;L^q(\Omega)),\quad \widehat{v}_m\longrightarrow \widehat{v}\,  \text{ strongly in } L^\infty(0,T;[L^q(\Omega)]^3).
	$$
Since $(\rho_m,v_m,\eta_m)_m \subset \mathcal{B}_{R,T}$, there exists some $(\rho^\star, v^\star, \eta^\star)$ such that,  we have 
	$$
	\rho_m\longrightarrow \rho^\star \text{ weakly in } W^{1,p}(0,T;L^q(\Omega))\text{ and weakly-* in } L^\infty(0,T;W^{1,q}(\Omega)),
	$$
	$$
	v_m\longrightarrow v^\star \text{ weakly in } W^{1,p}(0,T;[L^q(\Omega)]^3)\cap L^p(0,T;[W^{2,q}(\Omega)]^3)\text{ and weakly-* in } L^\infty(0,T;[W^{1,q}(\Omega)]^3),
	$$
	$$
	\eta_m\longrightarrow \eta^\star \text{ weakly in } W^{2,p}(0,T;L^q(\omega))\cap L^p(0,T;W^{4,q}(\omega))\text{ and weakly-* in } L^\infty(0,T;W^{3,q}(\omega)),
	$$
	$$
	\partial_t\eta_m\longrightarrow \partial_t\eta^\star \text{ weakly in } W^{1,p}(0,T;L^q(\omega))\cap L^p(0,T;W^{2,q}(\omega))\text{ and weakly-* in } L^\infty(0,T;W^{1,q}(\omega)) . 
	$$
	Let us  pass to the limit in the continuity equation. We have 
	$$
	\partial_t\rho_m\longrightarrow \partial_t\rho^\star \text{ weakly in } L^p(0,T;L^q(\Omega)),
	$$
	and
	$$
	\rho^0\nabla\cdot v_m\longrightarrow \rho^0\nabla\cdot v^\star\text{ weakly in } L^p(0,T;L^q(\Omega)).
	$$
	Now consider the term $\frac{1}{\delta_{\widehat{X}_m}}(\partial_t\widehat{X}_m -\widehat{v}_m)\cdot \nabla\rho_m$. 
	We have that $\nabla \rho_m \longrightarrow \nabla\rho^\star$ weakly-* in $L^\infty(0,T;L^q(\Omega))$ and since $W^{2,q}(\Omega)$ is compactly embedded in $W^{1,q}(\Omega)$, then using Aubin-Lions lemma we get $\widehat{v}_m \longrightarrow \widehat{v}$ strongly in $L^p(0,T;[W^{1,q}(\Omega)]^3)$. Thus, we can write for any $w\in L^{p'}(0,T;L^{q'}(\Omega))$,
	\begin{multline*}
		\int_0^T\int_\Omega\left(\frac{1}{\delta_{\widehat{X}_m}}(\partial_t\widehat{X}_m -\widehat{v}_m )\cdot\nabla\rho_m -\frac{1}{\delta_{\widehat{X}}}(\partial_t\widehat{X} - \widehat{v})\cdot\nabla\rho^\star\right) w\ dydt \\ = \int_0^T\left(\frac{1}{\delta_{\widehat{X}_m }}-\frac{1}{\delta_{\widehat{X}}}\right)(\partial_t\widehat{X}_m -\widehat{v}_m )\cdot\nabla\rho_m w \ dydt +\int_0^T\frac{1}{\delta_{\widehat{X}}}(\partial_t\widehat{X}_m -\widehat{v}_m -(\partial_t\widehat{X}-\widehat{v}))\cdot\nabla\rho_m w \ dydt\\+\int_0^T\frac{1}{\delta_{\widehat{X}}}(\partial_t\widehat{X}-\widehat{v})\cdot(\nabla\rho_m -\nabla\rho^\star)w \ dydt.
	\end{multline*}
	Since $\widehat{\eta}_m\longrightarrow \widehat{\eta}$ strongly in $L^p(L^\infty)$, the first term tends to zero. On the other hand, $\partial_t\widehat{\eta}_m$ and $\widehat{v}_m$ converge strongly to $\partial_t\widehat{\eta}$ and $\widehat{v}$ in $L^p(L^\infty)$. Thus the second term tends to zero. Finally, the weak convergence of $\nabla\rho_m$ in $L^p(L^q)$ implies that the third term converges to zero. 
	
	Let us now pass to the limit in the parabolic part of the system. We have by Aubin-Lions lemma that
	$$
	\widehat{\eta}_m\longrightarrow \widehat{\eta}\text{ strongly in } L^p(0,T;W^{3,q}(\omega)),
	$$
and 
	$$
	\widehat{\rho}_m \longrightarrow \widehat{\rho}\text{ strongly in } L^\infty(0,T;L^q(\Omega))\text{ and } \widehat{v}_m \longrightarrow \widehat{v} \text{ strongly in } L^p(0,T;[W^{1,q}(\Omega)]^3).
	$$
	Then, we can also check that 
	$$
	\mathbb{N}_{\widehat{X}_m}(\widehat{\rho}_m,\widehat{v}_m,\widehat{\eta}_m)\longrightarrow \mathbb{N}_{\widehat{X}}(\widehat{\rho},\widehat{v},\widehat{\eta}) \text{ strongly in } L^p(0,T;[L^q(\Omega)]^3),
	$$
	and 
	$$
	\mathbb{T}_{\widehat{X}_m}(\widehat{\rho}_m,\widehat{v}_m,\widehat{\eta}_m)\longrightarrow 	\mathbb{T}_{\widehat{X}}(\widehat{\rho},\widehat{v},\widehat{\eta}) \text{ weakly in } L^p(0,T;[L^q(\Omega)]^3).
	$$
	On the other hand, since 
	$$
	\widehat{\rho}_m\longrightarrow \widehat{\rho} \, \text{ strongly in } L^\infty(0,T;L^q(\Omega)),\quad \nabla\rho_m \longrightarrow \nabla\widehat{\rho} \, \text{ weakly in } L^p(0,T;[L^q(\Omega)]^9),
	$$
	we have
	$$
	\gamma\widehat{\rho}_m^{\gamma-1}b_{\widehat{X}_m}\nabla\widehat{\rho}_m\longrightarrow 	\gamma\widehat{\rho}^{\gamma-1}b_{\widehat{X}}\nabla\widehat{\rho} \, \text{ weakly in } L^p(0,T;[L^q(\Omega)]^3).
	$$
	Since $\nabla^2\widehat{v}_m \longrightarrow \nabla^2\widehat{v}\text{ weakly in } L^p(0,T;[L^q(\Omega)]^9)$,  then 

\begin{equation}
	\label{1250}
\nabla\cdot\mathbb{S}_{\widehat{X}_m} (\widehat{v}_m) \longrightarrow \nabla\cdot\mathbb{S}_{\widehat{X}} (\widehat{v}) \text{ weakly in } L^p(0,T;[L^q(\Omega)]^3). 
\end{equation}
Thus,
$$
\mathbb{F}_{\widehat{X}_m}(\widehat{\rho}_m,\widehat{v}_m,\widehat{\eta}_m)\longrightarrow \mathbb{F}_{\widehat{X}}(\widehat{\rho},\widehat{v},\widehat{\eta}) \text{ weakly in } L^p(0,T;[L^q(\Omega)]^3). 
$$
In particular, from \eqref{1250}, we can deduce that
$$
\mathbb{G}_{\widehat{X}_m}(\widehat{\rho}_m,\widehat{v}_m,\widehat{\eta}_m) \longrightarrow \mathbb{G}_{\widehat{X}}(\widehat{\rho},\widehat{v},\widehat{\eta}) \text{ weakly in } L^p(0,T;L^q(\omega)),
$$
and 
$$
[\mathbb{K}_{\widehat{X}_m}(\widehat{\rho}_m, \widehat{v}_m,\widehat{\eta}_m)]_j \longrightarrow [\mathbb{K}_{\widehat{X}}(\widehat \rho, \widehat{v},\widehat{\eta})]_j \text{ weakly in } L^p(0,T;W^{1,q}(\Omega)). 
$$

Finally, we show that $[\mathbb{K}_{\widehat{X}_m}(\widehat \rho_m,\widehat{v}_m,\widehat{\eta}_m)]_j$ converges weakly in $W^{1,p}(0,T;\mathscr{W}^{-1,q}(\Omega))$. Recall that the most important terms of $[\mathbb{K}_{\widehat{X}}(\widehat \rho,\widehat{v},\widehat{\eta})]_j$ are given in \eqref{1138}. In what follows, we only need to consider
\begin{multline}
        \label{1222}
    \left( \mathbb{I}_3-b_{\widehat{X}_m}^{-1}\right) \widehat{v}_m\cdot\tau^j-\left( \mathbb{I}_3-b_{\widehat{X}}^{-1}\right) \widehat{v}\cdot\tau^j,\quad   \frac{\partial (b_{\widehat{X}_m}^{-\top})_{ik}}{\partial x_r}(\widehat{v}_m)_k(b_{\widehat{X}_m})_{rj}-\frac{\partial (b_{\widehat{X}}^{-\top})_{ik}}{\partial x_r} \widehat{v}_k(b_{\widehat{X}})_{rj},\\(b_{\widehat{X}_m}^{-\top}-\mathbb{I}_3)_{ik}\frac{\partial (\widehat{Y}_m)_l}{\partial x_r}\frac{\partial(\widehat{v}_m)_k}{\partial y_l} (b_{\widehat{X}_m})_{rj}-(b_{\widehat{X}}^{-\top}-\mathbb{I}_3)_{ik}\frac{\partial \widehat{Y}_l}{\partial x_r}\frac{\partial\widehat{v}_k}{\partial y_l} (b_{\widehat{X}})_{rj}.  
    \end{multline}
Since $b_{\widehat{X}}$ and $\nabla Y$ are regular enough, we mainly  deal with the simplified term  
$$
(b_{\widehat{X}}^{-\top}-\mathbb{I})_{ik}\frac{\partial}{\partial y_l} (\widehat{v}_m - \widehat{v})_k.
$$
We focus on the most difficult term which contains the terms related to gradient of $\widehat v$. The other terms can be handled  directly. In fact, 
for all $\varphi\in L^{p'}(0,T;C_0^\infty(\mathbb{R}^3))$, we have 
\begin{align*}
&\int_{\mathbb{R}^3} \imath\partial_t\left( (b_{\widehat{X}}^{-\top}-\mathbb{I})_{ik}\frac{\partial}{\partial y_l} (\widehat{v}_m-\widehat{v})_k\right)  \cdot (\mathbb{I}-\Delta)^{-1/2}\varphi \ dy   \\
& =\int_{\mathbb{R}^3} \imath\partial_t\frac{\partial}{\partial y_l}  \left( (b_{\widehat{X}}^{-\top}-\mathbb{I})_{ik}(\widehat{v}_m-\widehat{v})_k\right)  \cdot (\mathbb{I}-\Delta)^{-1/2}\varphi \ dy \\
& \quad -\int_{\mathbb{R}^3} \imath\partial_t\left( \frac{\partial}{\partial y_l} \left(  (b_{\widehat{X}}^{-\top}-\mathbb{I})_{ik}\right)( \widehat{v}_m-\widehat{v})_k\right)   \cdot (\mathbb{I}-\Delta)^{-1/2}\varphi \ dy.
\end{align*}
Here, we employ a duality argument in the space 
$\mathscr{W}^{-1,q}(\Omega)$, as introduced in the proof of \cite[Lemma 7.3]{zbMATH06298655}. Now,
performing an integration by parts to the first term and since $\partial_t\widehat{v}_m \to \partial_t \widehat{v}$ weakly in $L^p(0,T;[L^q(\Omega)]^3)$, the above term converges to zero. This concludes 
$$
\partial_t[\mathbb{K}_{\widehat{X}_m}(\widehat \rho_m, \widehat{v}_m,\widehat{\eta}_m)]_i \longrightarrow \partial_t[\mathbb{K}_{\widehat{X}}(\widehat \rho, \widehat{v},\widehat{\eta})]_i \text{ weakly in } L^p(0,T;\mathscr{W}^{-1,q}(\Omega)). 
$$
Then, by uniqueness  of the solution of the linear problem, we obtain that $(\rho^\star,v^\star,\eta^\star)=(\rho,v,\eta)$, and therefore we obtain the existence result for our concerned system  for a small time interval $(0,T_0)$ with $T_0>0$.


\subsection{Uniqueness}\label{Sec-Uniquness} Let us prove the uniqueness of solution for the system \eqref{fs1}--\eqref{ic1}.
First, we define the norm 
\begin{multline*}
	\|(v,\eta)\|_{\mathcal{X}_{weak}}=\|v\|_{L^2(0,T;[H^1(\Omega)]^3)}+ \|v\|_{L^\infty(0,T;[L^2(\Omega)]^3)}+ \|\eta\|_{L^\infty(0,T;H^2(\omega))}\\+\|\partial_t\eta\|_{L^\infty(0,T;L^2(\omega))}+\|\partial_t\eta\|_{L^2(0,T;H^1(\omega))}, 
\end{multline*}
and  consider two solutions $(\rho^1,v^1,\eta^1)$ and $(\rho^2,v^2,\eta^2)$ of the system \eqref{fs1}--\eqref{ic1}.  Consider the differences  
$$\overline{\rho}=\rho^2-\rho^1,\quad \overline{v}=v^2-v^1,\quad  \overline{\eta}=\eta^2-\eta^1 .$$
Then, our  aim is to show that
$$
\|\overline{\rho}\|_{L^\infty(0,T;L^2(\Omega))}+\|(\overline{v},\overline{\eta})\|_{\mathcal{X}_{weak}}=0 , 
$$
which ensures that  $(\overline{\rho},\overline{v},\overline{\eta})=0$ almost  everywhere, and consequently, the  uniqueness  follows.  
The density $\rho^i$ satisfies the equation
$$
\partial_t\rho^i+\frac{1}{\delta_{X^i}}(\partial_tX^i-v^i)\nabla\rho^i=-\frac{\rho^i}{\delta_{X^i}}\nabla\cdot v^i,\quad i=1,2 , 
$$
where $$X^i=X_{\eta^i}.$$
Let start with the continuity equation, 
we have 
\begin{equation*}
	\partial_t\overline{\rho}+\frac{1}{\delta_{X^2}}(v^2-\partial_tX^2)\cdot\nabla\overline{\rho}=\mathscr{H}-\frac{\rho^2}{\delta_{X^2}}\nabla\cdot\overline{v},
\end{equation*} 
where
\begin{equation*}
	\mathscr{H}=\left( \frac{1}{\delta_{X^1}}(v^1-\partial_tX^1)-\frac{1}{\delta_{X^2}}(v^2-\partial_tX^2) \right)\cdot \nabla\rho^1-\frac{\overline{\rho}}{\delta_{X^2}}\nabla\cdot v^1\\+\rho^1\left( \frac{1}{\delta_{X^1}}- \frac{1}{\delta_{X^2}}\right) \nabla\cdot v^1.
\end{equation*}
Then using \eqref{1532} from Proposition \ref{masseq}, we get
\begin{equation*}
	\|\overline{\rho}(t)\|_{L^2(\Omega)}\leq C\bigg(  \int_0^t \|\mathscr{H}\|_{L^2(\Omega)}\ dt+\int_0^t \|\delta_{X^2}^{-1}\nabla\cdot v^1(s)\|_{L^\infty(\Omega)} \|\overline{\rho}(s)\|_{L^2(\Omega)} \ ds
	\bigg). 
\end{equation*}
Note  that
\begin{multline*}
	\int_0^t \|\mathscr{H}\|_{L^2(\Omega)}\ dt\leq C\bigg(T^{1/2}\|\nabla\cdot\overline{v}\|_{L^2(0,T;L^2(\Omega))}+T \|\overline{\eta}\|_{L^\infty(0,T;L^\infty(\Omega))}\\+T^{1/2}\|\partial_t\overline{\eta}\|_{L^2(0,T;H^1(\omega))} + T^{1/2}\|\overline{v}\|_{L^2(0,T;[H^1(\Omega)]^3)} \bigg).
\end{multline*}
Then, using Gr\"{o}nwall's Lemma and H\"{o}lder inequality, we deduce that
\begin{multline*}
	\|\overline{\rho}(t)\|_{L^2(\Omega)}\leq  C\bigg(T^{1/2}\|\nabla\cdot\overline{v}\|_{L^2(0,T;L^2(\Omega))}+T \|\overline{\eta}\|_{L^\infty(0,T;L^\infty(\Omega))}\\+T^{1/2}\|\partial_t\overline{\eta}\|_{L^2(0,T;H^1(\omega))} + T^{1/2}\|\overline{v}\|_{L^2(0,T;[H^1(\Omega)]^3)} \bigg)\leq CT^{1/2}\|(\overline{v},\overline{\eta})\|_{\mathcal{X}_{weak}}.
\end{multline*}
Now, we deal with the parabolic part. We set 
$$
N^i=N_{\eta^i},\quad i=1,2.
$$
We recall that each $(v^i,\eta^i)$ satisfies the system 

\begin{equation}
	\label{1202}
	\left\{
	\begin{array}{cc}
		\frac{1}{\rho^i}\left(\mathbb{T}_{X^i}(\rho^i,v^i,\eta^i)-\nabla\cdot (\mathbb{S}_{X^i}(v^i))+\mathbb{N}_{X^i}(\rho^i,v^i)+\nabla\cdot (p(\rho^i)b^\top_{X^i})\right)=0& \text{ in } (0,T)\times \Omega,\\
		\partial_{tt}\eta^i + A_1 \eta^i +A_2\partial_t\eta^i= \mathbb{G}_{X^i}(\rho^i,v^i,\eta^i)& \text{in } (0,T)\times\omega,
	\end{array}
	\right.
\end{equation}
with the boundary conditions
\begin{equation}
	\label{1203}
	\left\{
	\begin{array}{cc}
		(v^i)_n = 0,\ \, \left[2\mu\mathbb{D}(v^i)n +\beta_0 v^i\right] \cdot \tau^j=0 & \text{ on } (0,T)\times\Gamma_{0},\\
		\left(v^i-\partial_t \eta^i e_3 \right)_{n}=0,\ \,  \left[2\mu b_{X^i}^{-1}\frac{|N|}{|N^i|}\mathbb{D}_{X^i}(v^i)n+\beta_1 b_{X^i}^{-1}(b_{X^i}^{-\top}v^i-\partial_t\eta^i e_3) \right] \cdot \tau^j=0 &\text{ on } (0,T)\times\Gamma (\eta^0) . 
	\end{array}
	\right.
\end{equation}
Then, the difference $(\overline{v},\overline{\eta})$ satisfies the following linear system
\begin{equation}
	\label{1336-1}
	\left\{
	\begin{array}{cc}
		\left( \mathbb{T}_{X^2}(\rho^2,\overline{v},\eta^2)+\rho^2\mathscr{C}_{v^2}(\overline{v})-\nabla\cdot (\mathbb{S}_{X^2}(\overline{v}))\right)  =\mathscr{F}_1+\nabla\cdot\mathscr{F}_2& \text{ in } (0,T)\times \Omega,  \vspace{0.15cm}\\
		\partial_{tt}\overline{\eta} + A_1 \overline{\eta} +A_2\partial_t\overline{\eta}= -(\mathbb{S}_{X^2}(\overline{v})) N\cdot e_3 -\mathscr{F}_2N\cdot e_3& \text{in } (0,T)\times\omega , 
	\end{array}
	\right.
\end{equation}
with  boundary conditions
\begin{equation}
	\label{1335-1}
	\left\{
	\begin{array}{cc}
		(\overline{v})_n = 0,\  \left[2\mu\mathbb{D}(\overline{v})n +\beta_0 \overline{v}\right] \cdot \tau^j=0 & \text{on } (0,T)\times\Gamma_{0},\\
		\left(\overline{v}-\partial_t \overline{\eta} e_3 \right)_{n}=0,\  \left[2\mu b_{X^2}^{-1}\frac{|N|}{|N^2|}\mathbb{D}_{X^2}(\overline{v})n+\mathscr{F}_2n+\beta_1 b_{X^2}^{-1}(b_{X^2}^{-\top}\overline{v}-\partial_t\overline{\eta} e_3) \right] \cdot \tau^j=[ \mathscr{K}]_j &\text{on } (0,T)\times\Gamma (\eta^0),
	\end{array}
	\right.
\end{equation}
and ``zero'' initial conditions. 
To make the presentation simpler, we postpone the mathematical expressions of the terms $\mathscr{F}_1$, $\mathscr{F}_2$ and $\mathscr{K}$  in Appendix \ref{calcul} by  \eqref{1443}, \eqref{1425} and \eqref{2010} respectively.  The linear term $\mathscr{C}_{v^2}(\overline{v})$ represents the convective term with respect to $v^2$ and is defined in \eqref{1020}.

We see that the system \eqref{1336-1}--\eqref{1335-1} reads  as \eqref{1028}--\eqref{1021} in Lemma \ref{1821}. 
Thus, one may apply the estimate \eqref{1022} of Lemma \ref{1821}  with zero initial conditions to  obtain
\begin{multline}\label{1216}
	\|(\overline{v},\overline{\eta})\|_{\mathcal{X}_{weak}}\leq C\bigg( \|\mathscr{F}_1\|_{L^2(0,T;([H^1(\Omega)]^9)')}+\|\mathscr{F}_2\|_{L^2(0,T;[L^2(\Omega)]^9)}+ \|\mathscr{K}\|_{L^2(0,T;[H^{-1/2}(\partial\Omega)]^2)}\bigg). 
\end{multline} 
We need to estimate the right hand side of inequality \eqref{1216} in terms of $\|(\overline{v},\overline{\eta})\|_{\mathcal{X}_{weak}}$. From the expression of $\mathscr{F}_2$ given by \eqref{1425}, we compute by  applying H\"{o}lder inequality, that
\begin{multline*}
	\|\mathscr{F}_2\|_{L^2(0,T; [L^2(\Omega)]^9)}\leq C\bigg( T^{1/2}\|\overline{\eta}\|_{L^\infty(0,T;H^2(\omega))}\|v^1\|_{L^\infty(0,T;[L^\infty(\Omega)]^3)}\|b_{X^2}\|_{L^\infty(0,T;L^\infty(\Omega))}\\+T^{1/2}\|\partial b_{X^1}^{-\top}\|_{L^\infty(0,T;L^\infty(\omega))}\|v^1\|_{L^\infty(0,T;L^\infty(\Omega))}\|\overline{\eta}\|_{L^\infty(0,T;H^1(\omega))} \\+T^{1/2}\|\overline{\eta}\|_{L^\infty(0,T;H^2(\omega))}\|\nabla v^1\|_{L^\infty(0,T;L^q(\Omega))}+\|\overline{\rho}\|_{L^\infty(0,T;L^2(\Omega))}\|b_{X^1}^\top\|_{L^\infty(0,T;[L^\infty(\Omega)]^9)} \bigg), 
\end{multline*}
and thus,
\begin{equation}
	\label{1800}
	\|\mathscr{F}_2\|_{L^2(0,T; [L^2(\Omega)]^9)}\leq CT^{1/2}\|(\overline{v},\overline{\eta})\|_{\mathcal{X}_{weak}}.
\end{equation}
Now, from  \eqref{1443}, we have   that 
$$
\mathscr{F}_1=\mathscr{F}_{11}+\mathscr{F}_{12}, \quad \mathscr{F}_{11}=-\rho^2\nabla\cdot\mathscr{F},
$$
where $\mathscr{F}_{11}$, $\mathscr{F}$ and $\mathscr{F}_{12}$ are respectively defined  in \eqref{1531}, \eqref{1806} and \eqref{1239}.
First, we deal with $\mathscr{F}_{12}$ that can be estimated in norm $L^2(0,T;[L^{6/5}(\Omega)]^3)$ and we use the embedding $W^{-1,2}(\Omega)\hookrightarrow L^{6/5}(\Omega)$.
Using again H\"{o}lder inequality and the fact that
\begin{equation}
	\label{1208}
	\|\nabla\overline{\eta}\|_{L^\infty(0,T;[L^2(\omega)]^2)}\leq T^{1/2}\|\nabla\partial_t\overline{\eta}\|_{L^2(0,T;[L^2(\omega)]^2)},
\end{equation}
we get
\begin{multline*}
	\|\mathscr{F}_{12}\|_{L^2(0,T;[L^{6/5}(\Omega)]^3)}\leq C\bigg( T^{1/2}\|\partial_t\overline{\eta}\|_{L^2(0,T;H^1(\omega))}\|\partial_t v^1\|_{L^2(0,T;[L^q(\Omega)]^3)} +\\ T^{1/2}\|\partial_t\overline{\eta}\|_{L^2(0,T;H^1(\omega))}\|\nabla v^1\|_{L^2(0,T;[L^\infty(\Omega)]^9)}+ T^{1/2}\|\partial_t\overline{\eta}\|_{L^\infty(0,T;L^2(\omega))}\|\nabla v^1\|_{L^\infty(0,T;[L^q(\Omega)]^9)}\\+ T^{1/2}\|\partial_t\overline{\eta}\|_{L^\infty(0,T;L^2(\omega))}\| v^1\|_{L^\infty(0,T;[L^q(\Omega)]^9)}+ T^{1/2}\|\nabla\partial_t\overline{\eta}\|_{L^2(0,T;L^2(\omega))}\| v^1\|_{L^2(0,T;[L^q(\Omega)]^9)}\\+ T^{1/2}\|\overline{v}\|_{L^\infty(0,T;[L^2(\Omega)]^3)}\| v^1\|_{L^\infty(0,T;[L^q(\Omega)]^9)}+\| \overline{\rho}\|_{L^\infty(0,T;L^2(\Omega))}\bigg).   
\end{multline*}
Thus, 
\begin{equation}
	\label{1514}
	\|\mathscr{F}_{12}\|_{L^2(0,T;[L^{6/5}(\Omega)]^3)}\leq C T^{1/2}\|(\overline{v},\overline{\eta})\|_{\mathcal{X}_{weak}}.
\end{equation}
Finally, we estimate $\mathscr{F}_{11}$ in $W^{-1,2}(\Omega)$. More precisely, we estimate $\mathscr{F}$ in $L^2(0,T;[L^2(\Omega)]^9)$ and its trace in $L^2(0,T;[L^2(\partial\Omega)]^9)$. Using again \eqref{1208}, we get
\begin{align}
	\label{1439}
	\|\mathscr{F}\|_{L^2(0,T; [L^2(\Omega)]^9)} & \leq CT^{1/2}\bigg(\|\overline{v}\|_{L^\infty(0,T;[L^2(\Omega)]^3)}+ \|\partial_t\overline{\eta}\|_{L^2(0,T;H^1(\omega))}+\|\partial_t\overline{\eta}\|_{L^\infty(0,T;L^2(\omega))}\bigg) \notag \\ 
	& \leq CT^{1/2}\|(\overline{v},\overline{\eta})\|_{\mathcal{X}_{weak}}.
\end{align}
Moreover, by interpolation, we have that $v\in L^3(0,T;[H^{1/6}(\partial\Omega)]^3)$. Since $H^{1/6}(\partial\Omega) \hookrightarrow L^2(\partial\Omega)$ and $ W^{1-1/q,q}(\partial\Omega)\hookrightarrow L^\infty(\partial\Omega)$ for $q>3$, we obtain
\begin{align} 
	\label{1755}
	&\|\mathscr{F}\|_{L^2(0,T; [L^2(\partial\Omega)]^9)}  \leq C\bigg(T^{1/6}\|\overline{v}\|_{L^{3}(0,T;[L^2(\partial\Omega)]^3)}+ T^{1/2}\|\partial_t\overline{\eta}\|_{L^2(0,T;H^1(\omega))} +T^{1/2}\|\partial_t\overline{\eta}\|_{L^\infty(0,T;L^2(\omega))}\bigg) \notag \\
	&\leq C\bigg(T^{1/6}\|\overline{v}\|_{L^{\infty}(0,T;[L^2(\Omega)]^3)}+T^{1/6}\|\overline{v}\|_{L^2(0,T;[H^1(\Omega)]^3)} + T^{1/2}\|\partial_t\overline{\eta}\|_{L^2(0,T;H^1(\omega))}+T^{1/2}\|\partial_t\overline{\eta}\|_{L^\infty(0,T;L^2(\omega))}\bigg) \notag 
	\\
	&\leq CT^{1/6}\|(\overline{v},\overline{\eta})\|_{\mathcal{X}_{weak}}.
\end{align}
Hence, combining \eqref{1755} and \eqref{1439}, we get 
\begin{equation}
	\label{1648}
	\|\mathscr{F}_{11}\|_{L^2(0,T;([H^1(\Omega)]^3)')} \leq CT^{1/6}\|(\overline{v},\overline{\eta})\|_{\mathcal{X}_{weak}}.
\end{equation}

Finally, recall $\mathscr{K}$ from \eqref{2010}.  We notice that $[\mathscr{K}]_j$ is in $L^2(0,T;L^2(\partial\Omega))$. Indeed, $\mathbb{D}_{X^1}(v^1)\in L^p(0,T;[W^{1-1/q,q}(\partial\Omega)]^9)$, since $W^{1-1/q,q}(\partial\Omega) \hookrightarrow L^\infty(\partial\Omega)$, we have $\mathbb{D}_{X^1}(v^1)\in L^p(0,T;[L^\infty(\partial\Omega)]^9)$.  Then using \eqref{1208}, we get
\begin{equation}
	\label{2037}
	\|\mathscr{K}_j\|_{L^2(0,T;L^2(\partial\Omega))}\leq CT^{1/2}\|\partial_t\overline{\eta}\|_{L^2(0,T; H^1(\omega))},\quad j=1,2. 
\end{equation}
Using \eqref{1800},  \eqref{1514}, \eqref{1648} and \eqref{2037}, we get that for some $\gamma>0$
$$
\|(\overline{v}, \overline{\eta})\|_{\mathcal{X}_{weak}}\leq CT^\gamma \|(\overline{v}, \overline{\eta})\|_{\mathcal{X}_{weak}}.
$$
Taking $T=T_1$ small enough, we obtain that $\|(\overline{v}, \overline{\eta})\|_{\mathcal{X}_{weak}}=0$. Repeating the procedure over $(mT_1;(m+1)T_1)$ for $m\in \mathbb{N}$, we get then the uniqueness of the solution over $(0,T_0)$. 

\vspace{.2cm}

  This concludes the proof of the main result of this paper. 

\vspace{.5cm}
\noindent
{\bf Acknowledgments.} The works of K.B. and \v{S}.N. have been supported by the  Praemium Academiae of \v{S}.N, and the Institute
of Mathematics, CAS is supported by RVO:67985840.

\appendix

\section{Some useful results}\label{Apndx-A} 

\crefname{section}{appendice}{appendices}
\Crefname{section}{Appendice}{Appendices}

We have the following classical result for the mass equation
\begin{equation}
	\label{mass}
	\left\{
	\begin{array}{cc}
		\partial_t\varrho+w\cdot \nabla\varrho=g & t>0,\quad \text{in } \Omega,\\
		\varrho(0,\cdot)=\varrho^0(\cdot)& \text{in } \Omega. 
	\end{array}
	\right.
\end{equation}
The following proposition  is proved in \cite{Solonnikov1980}.
\begin{Proposition}
	\label{masseq}
	Let $w$ be a tangent vector field, namely $w\cdot n=0$ on $\partial\Omega$.
	Let $w\in L^1(0,T;[C^{0,1}(\Omega)]^3)$ suppose that
	$$
	\varrho^0\in L^q(\Omega),\quad g\in L^1(0,T;L^q(\Omega)), \quad (q\geq1). 
	$$
	Then, there exists a unique solution $\varrho\in C([0,T];L^q(\Omega))$ to the system \eqref{mass} such that
	\begin{equation}
		\label{1532}
		\|\varrho(t)\|_{L^q(\Omega)}\leq e^{\frac{1}{q}\int_0^t\|\nabla\cdot w(s)\|_{L^\infty(\Omega)} \ ds}\left( \|\varrho^0\|_{L^q(\Omega)}+\int_0^te^{-\frac{1}{q}\int_0^s\|\nabla\cdot w(\tau)\|_{L^\infty(\Omega)}\ d\tau}\|g(s)\|_{L^q(\Omega)} \ ds\right). 
	\end{equation}
	Moreover, if
	$$
	\varrho^0\in W^{1,q}(\Omega),\quad g\in L^1(0,T;W^{1,q}(\Omega)), \quad (q\geq1), 
	$$
	then $\varrho\in C([0,T];W^{1,q}(\Omega))$ and 
	\begin{equation}
		\label{1604}
		\|\nabla\varrho(t)\|_{[L^q(\Omega)]^3}\leq e^{2\int_0^t\|\nabla w(s)\|_{L^\infty(\Omega)} \ ds}\bigg( \|\nabla\varrho^0\|_{[L^q(\Omega)]^3}+\int_0^te^{-2\int_0^s\|\nabla w(\tau)\|_{L^\infty(\Omega)}\ d\tau}\|\nabla g(s)\|_{[L^q(\Omega)]^3} \ ds\bigg). 
	\end{equation}
\end{Proposition}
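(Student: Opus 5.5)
The proposition concerns the transport equation \eqref{mass} with a tangential, Lipschitz-in-space field $w$. My plan is to work along characteristics, prove the estimates first for smooth data, and then pass to the general case by density.

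\textbf{Step 1: the flow.} Since $w\in L^1(0,T;[C^{0,1}(\Omega)]^3)$ and $w\cdot n=0$ on $\partial\Omega$, the Carathéodory ODE
\[
\partial_t Z(t,s,y)=w(t,Z(t,s,y)),\qquad Z(s,s,y)=y,
\]
has a unique absolutely continuous solution. Tangency keeps $Z$ inside $\overline{\Omega}$: extend $w$ to a neighbourhood and use the distance function to $\partial\Omega$ together with a Gronwall argument. Hence $Z(t,s,\cdot)$ is a bi-Lipschitz homeomorphism of $\overline\Omega$, and Liouville's formula gives
\[
\det\nabla Z(t,s,y)=\exp\Big(\int_s^t(\nabla\cdot w)(\tau,Z(\tau,s,y))\,d\tau\Big).
\]
The solution is then written explicitly as
\[
\varrho(t,x)=\varrho^0(Z(0,t,x))+\int_0^t g(s,Z(s,t,x))\,ds.
\]
Uniqueness follows for any solution in $C([0,T];L^q)$ by duality, or by a renormalization (DiPerna--Lions) argument, both available since $w$ is Lipschitz.

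\textbf{Step 2: the $L^q$ estimate \eqref{1532}.} For smooth data, multiply \eqref{mass} by $|\varrho|^{q-2}\varrho$ and integrate over $\Omega$. The transport term becomes
\[
\int_\Omega w\cdot\nabla\frac{|\varrho|^q}{q}=-\frac1q\int_\Omega(\nabla\cdot w)|\varrho|^q,
\]
because the boundary term vanishes by $w\cdot n=0$. This yields
\[
\frac{d}{dt}\|\varrho\|_{L^q}\le \frac1q\|\nabla\cdot w\|_{L^\infty}\|\varrho\|_{L^q}+\|g\|_{L^q}.
\]
Integrating this differential inequality gives exactly the integrating-factor form \eqref{1532}. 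The case $q=1$ can be handled either with the formula from Step 1 plus a change of variables, or as the limit $q\to1$. Continuity in time follows from the density argument, using smooth approximations of $w$ that remain tangential.

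\textbf{Step 3: the gradient estimate \eqref{1604}.} Differentiate the equation: $\partial_k\varrho$ satisfies
\[
\partial_t\partial_k\varrho+w\cdot\nabla\partial_k\varrho=\partial_kg-\partial_kw\cdot\nabla\varrho.
\]
Apply the Step 2 computation to the vector $\nabla\varrho$, still using $w\cdot n=0$ to kill the boundary term. Then use $\frac1q|\nabla\cdot w|\le\|\nabla w\|_{L^\infty}$ and $|\partial_kw\cdot\nabla\varrho|\le\|\nabla w\|_{L^\infty}|\nabla\varrho|$, which gives
\[
\frac{d}{dt}\|\nabla\varrho\|_{L^q}\le 2\|\nabla w\|_{L^\infty}\|\nabla\varrho\|_{L^q}+\|\nabla g\|_{L^q},
\]
and Gronwall yields \eqref{1604}.

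\textbf{Main obstacle.} The hard part is the rigorous justification: at this regularity $\nabla\varrho$ is not smooth enough for the formal computation. I will regularize $\varrho^0$, $g$, and $w$. For $w$, the regularization must preserve tangency, for instance by mollifying in local boundary-flattening coordinates and then projecting. I will then use the uniform bounds and pass to the limit by weak-$*$ compactness. Strong continuity $\varrho\in C([0,T];W^{1,q})$ follows from the DiPerna--Lions commutator lemma, or alternatively from the formula in Step 1 combined with continuity of composition with bi-Lipschitz maps in $W^{1,q}$. This last point is where I expect the most care to be needed.
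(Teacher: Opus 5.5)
The paper gives no proof of this proposition; it quotes it from \cite{Solonnikov1980}. Your argument is the standard self-contained route: the representation formula along the bi-Lipschitz flow (tangency giving invariance of $\overline{\Omega}$), the $L^q$ identity in which the flux $\int_{\partial\Omega}(w\cdot n)|\varrho|^q/q$ vanishes, and the differentiated equation for $\nabla\varrho$. It produces the displayed exponents, up to the caveats below. The citation buys the paper brevity, while your proof shows where each hypothesis enters. You can also shortcut Step 2. From your representation formula, Minkowski's inequality and the change of variables $x=Z(t,s,y)$ with $\det\nabla_y Z(t,s,y)\le e^{\int_s^t\|\nabla\cdot w(\tau)\|_{L^\infty}d\tau}$ give \eqref{1532} directly for every $q\ge1$, without any tangency-preserving regularization of $w$. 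Continuity in $L^q$ then follows by approximating $\varrho^0$ and $g$ by continuous functions; this needs $q<\infty$, since for $q=\infty$ continuity in time fails.

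A few steps need tightening.

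First, $\frac1q|\nabla\cdot w|\le\|\nabla w\|_{L^\infty}$ depends on the matrix norm. It holds for the entrywise $\ell^1$ norm, but for the operator norm one only has $|\operatorname{tr}A|\le 3\|A\|$, so it needs $q\ge3$. Estimate the two terms together instead. With $A=\nabla w$, $\xi=\nabla\varrho$ and $\lambda_1\ge\lambda_2\ge\lambda_3$ the eigenvalues of $\frac12(A+A^\top)$,
\[
\tfrac1q(\operatorname{tr}A)|\xi|^2-\xi^\top A\xi\le\Big(\tfrac1q(\lambda_1+\lambda_2)-\big(1-\tfrac1q\big)\lambda_3\Big)|\xi|^2\le\Big(1+\tfrac1q\Big)\|A\|\,|\xi|^2 ,
\]
which yields the factor $2$ for all $q\ge1$ with the operator norm.

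Second, for $q=1$ there is no weak-$*$ compactness in $L^\infty(0,T;W^{1,1}(\Omega))$. Use instead strong convergence of the approximations in $C([0,T];L^1(\Omega))$, which follows from \eqref{1532}, together with lower semicontinuity of the total variation. Then use the representation formula (composition with bi-Lipschitz maps preserves $W^{1,1}$) to see that the limit lies in $W^{1,1}$ and not merely in $BV$.

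Third, uniqueness by plain duality is not directly available. The adjoint problem is the continuity equation $\partial_t\phi+\nabla\cdot(w\phi)=\psi$, whose solution contains the Jacobian of the flow. That Jacobian is only $L^\infty$ in space for Lipschitz $w$, so $\phi$ is not an admissible test function. Bridging this is precisely the DiPerna--Lions commutator lemma, so rely on renormalization.

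Finally, if you regularize $w$, you need $\limsup_{\varepsilon\to0}\int_0^t\|\nabla w_\varepsilon\|_{L^\infty}\le\int_0^t\|\nabla w\|_{L^\infty}$ to recover the exact exponent in \eqref{1604}. A generic chart-based tangential mollification only gives this up to a multiplicative constant. That is harmless for how the paper uses the estimate, but not for the statement as written.
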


\begin{Remark}
	\label{1614}
	If 
	$$
	\varrho^0\in W^{2,q}(\Omega),\quad g\in L^p(0,T;W^{1,q}(\Omega)), \quad (p, q\geq1), 
	$$
	Then, taking $\widetilde{\varrho}=\varrho-\varrho^0$, the mass equation in $\widetilde{\varrho}$ writes
	$$
	\left\{
	\begin{array}{cc}
		\partial_t\widetilde{\varrho}+w\cdot \nabla\widetilde{\varrho}=g-w\cdot\nabla\varrho^0=\widetilde{g} & t>0,\quad \text{in } \Omega,  \vspace{0.15cm}\\
		\widetilde{	\varrho}(0,\cdot)=0& \text{in } \Omega. 
	\end{array}
	\right.
	$$
	From \eqref{1532} and \eqref{1604}, we obtain that
	$$
	\|\widetilde{\varrho}(t)\|_{W^{1,q}(\Omega))}\leq C\int_0^t\| \widetilde{g}(s)\|_{W^{1,q}(\Omega)} \ ds. 
	$$
	Thus
	$$
	\|\varrho-\varrho^0\|_{L^\infty (0,T;W^{1,q}(\Omega))}\leq C\left( \| g\|_{L^1(0,T;W^{1,q}(\Omega))} +T\|\varrho^0\|_{W^{2,q}(\Omega)}\right) . 
	$$
\end{Remark}

We  also have  the following result which is proved in \cite[Lemma 7.3]{zbMATH06298655}.
\begin{Lemma}
	\label{K.}
	Let $f$ and $g$ be two functions in $L^q(\Omega)$ such that $q>3$ and $p>1$. Then
	\begin{equation}
		\|(\mathbb{I}-\Delta)^{-1/2}\imath (f(t)g(t))\|_{L^q(\Omega)}\leq C\|f(t)\|_{L^q(\Omega)}\|g(t)\|_{L^q(\Omega)}. 
	\end{equation}
	Moreover, if $f$ and $g$ are regular enough, we further have
	\begin{multline}
		\|\partial_t\left[ (\mathbb{I}-\Delta)^{-1/2}\imath (fg)\right] \|_{L^p(0,T;L^q(\Omega))}\leq C \bigg[\left( \int_0^T \|\partial_tf\|^p_{L^q(\Omega)}\|g\|^p_{L^q(\Omega)} \ dt\right)^{1/p}\\ +\left( \int_0^T \|f\|^p_{L^q(\Omega)}\|\partial_tg\|^p_{L^q(\Omega)} \ dt\right)^{1/p} \bigg], 
	\end{multline}
	and 
	\begin{multline}
		\label{1550}
		\|\partial_t\left[ (\mathbb{I}-\Delta)^{-1/2}\imath (\nabla fg)\right] \|_{L^p(0,T;L^q(\Omega))}\leq C \bigg[\left( \int_0^T \|\partial_tf\|^p_{L^q(\Omega)}\|g\|^p_{W^{1,q}(\Omega)} \ dt \right)^{1/p} \\+\left( \int_0^T \|\nabla f\|^p_{L^q(\Omega)}\|\partial_tg\|^p_{L^q(\Omega)} \ dt\right)^{1/p} \bigg] , 
	\end{multline}
	where $C$ is a  positive constant that does not depend on $T$. 
\end{Lemma}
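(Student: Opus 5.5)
The plan is to prove the three estimates in order: the pointwise-in-time bound first, then the two time-derivative bounds reduced to it by the Leibniz rule. Throughout I read $(\mathbb{I}-\Delta)^{-1/2}$ as the Bessel potential of order one, i.e.\ a smoothing operator of order one on $\mathbb{R}^3$. It is bounded from $L^r(\mathbb{R}^3)$ into $W^{1,r}(\mathbb{R}^3)$ for every $1<r<\infty$. Also, $\Omega$ is bounded, so $L^s(\Omega)\hookrightarrow L^q(\Omega)$ for $s\geq q$.

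\emph{First estimate.} By H\"older, $fg\in L^{q/2}(\Omega)$ with $\|fg\|_{L^{q/2}}\leq\|f\|_{L^q}\|g\|_{L^q}$. For the product $fg$, which is only in $L^{q/2}$, I take $\imath$ to be the zero extension; this is bounded $L^{q/2}(\Omega)\to L^{q/2}(\mathbb{R}^3)$, and only the $L^{q/2}$ norm of $\imath(fg)$ is used. Then
\[
\|(\mathbb{I}-\Delta)^{-1/2}\imath(fg)\|_{W^{1,q/2}(\mathbb{R}^3)}\leq C\|fg\|_{L^{q/2}(\Omega)}.
\]
It remains to check that $W^{1,q/2}(\mathbb{R}^3)\hookrightarrow L^q(\Omega)$ on the bounded set $\Omega$:
\begin{itemize}
\item If $q/2<3$, Sobolev gives $W^{1,q/2}\hookrightarrow L^{r}$ with $\frac1r=\frac2q-\frac13$. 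Since $q>3$, we have $\frac2q-\frac13<\frac1q$, so $r> q$, and the restriction to $\Omega$ lies in $L^q(\Omega)$.
\item If $q/2\geq 3$, the same holds with $r$ arbitrarily large (or $r=\infty$).
\end{itemize}
This is exactly where the hypothesis $q>3$ enters.

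\emph{Second estimate.} The operator $(\mathbb{I}-\Delta)^{-1/2}\imath$ is linear and independent of $t$, so it commutes with $\partial_t$. By the Leibniz rule,
\[
\partial_t\big[(\mathbb{I}-\Delta)^{-1/2}\imath(fg)\big]=(\mathbb{I}-\Delta)^{-1/2}\imath(\partial_tf\,g)+(\mathbb{I}-\Delta)^{-1/2}\imath(f\,\partial_tg).
\]
Applying the first estimate at each fixed $t$ and then taking the $L^p(0,T)$ norm gives the claim. The constant is the one from the first estimate, so it is independent of $T$.

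\emph{Third estimate, \eqref{1550}.} Again $\partial_t(\nabla f\,g)=\nabla\partial_tf\,g+\nabla f\,\partial_tg$.
\begin{itemize}
\item The term $\nabla f\,\partial_tg$ is handled directly by the first estimate with the pair $(\nabla f,\partial_tg)$. This gives the contribution $\|\nabla f\|_{L^q}\|\partial_tg\|_{L^q}$.
\item For $\nabla\partial_tf\,g$, where no derivative of $\partial_tf$ may be used, I write
\[
\nabla\partial_tf\,g=\nabla(\partial_tf\,g)-\partial_tf\,\nabla g.
\]
\begin{itemize}
\item For $\nabla(\partial_tf\,g)$ I use the extension property assumed in the notation section, $\|(\mathbb{I}-\Delta)^{-1/2}\imath\nabla u\|_{L^q}\leq\|u\|_{L^q}$, with $u=\partial_tf\,g\in W^{1,q}(\Omega)$. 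Combined with $\|\partial_tf\,g\|_{L^q}\leq\|\partial_tf\|_{L^q}\|g\|_{L^\infty}\leq C\|\partial_tf\|_{L^q}\|g\|_{W^{1,q}}$, via $W^{1,q}\hookrightarrow L^\infty$ for $q>3$, this gives $C\|\partial_tf\|_{L^q}\|g\|_{W^{1,q}}$.
\item For $\partial_tf\,\nabla g$, the first estimate applied to the pair $(\partial_tf,\nabla g)$ gives $C\|\partial_tf\|_{L^q}\|\nabla g\|_{L^q}$.
\end{itemize}
\end{itemize}
Summing and taking $L^p$ in time yields \eqref{1550}.

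The main obstacle is this last step. Multiplication by $g$ does not commute with the extension and with $\nabla$, so one must avoid placing a derivative on $\partial_tf$. The integration-by-parts identity above, together with the commutation property of $\imath$ with gradients stated in the notation section, is what makes the argument work. If that property is unavailable as stated, I would instead argue by duality: pair with $(\mathbb{I}-\Delta)^{-1/2}\varphi$, integrate by parts on $\Omega$, and control the resulting boundary term by traces of $\partial_tf\,g\in W^{1,q}$.
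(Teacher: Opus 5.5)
Your proof is correct. It is essentially the standard argument behind the result the paper cites for this lemma; the paper gives no proof of its own. That argument runs as follows: H\"older into $L^{q/2}$, then the order-one smoothing of $(\mathbb{I}-\Delta)^{-1/2}$ combined with the Sobolev embedding for $q>3$ (your direct mapping-property form is just the dual of the duality pairing with $(\mathbb{I}-\Delta)^{-1/2}\varphi$ used there), then the Leibniz rule, and finally moving the gradient off $\partial_t f$ through the gradient property of $\imath$.

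One correction: do not switch to the zero extension in the first step. The paper's $\imath$ is already bounded on $L^{q/2}$ because $q/2\in(1,\infty)$. More importantly, the first estimate must hold for the same $\imath$ you use in the third step, and there the zero extension would break the gradient property: it produces an extra boundary term $u\,n$ supported on $\partial\Omega$, which is not controlled by $\|u\|_{L^q(\Omega)}$. The same obstruction is why your trace-based fallback could not give a bound involving only $\|\partial_t f\|_{L^q(\Omega)}$.
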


\section{Calculations for the uniqueness part}\label{Apndx-Calcul}

\label{calcul}
This section is devoted to write some calculations regarding the uniqueness part in subsection \ref{Sec-Uniquness}. We recall that $\mathbb{T}_X$ and $\mathbb{N}_X$ are defined in \eqref{defT} and \eqref{defN} respectively. We compute that
\begin{equation*}
	\frac{1}{\rho^2}\mathbb{T}_{X^2}(\rho^2,v^2,\eta^2)-\frac{1}{\rho^1}\mathbb{T}_{X^1}(\rho^1,v^1,\eta^1)= \frac{1}{\rho^2}\mathbb{T}_{X^2}(\rho^2,\overline{v},\eta^2)+\mathscr{T}_1+\mathscr{T}_2+\mathscr{T}_3+\mathscr{T}_4+\mathscr{T}_5+\mathscr{T}_6+\mathscr{T}_7,
\end{equation*}
with
$$
\mathscr{T}_1=(\nabla X^2-\nabla X^1)\partial_t v^1,\quad \mathscr{T}_2=(\nabla X^2-\nabla X^1)\nabla v^1\partial_t Y^2(X^2),\quad \mathscr{T}_3=\nabla X^1\nabla v^1(\partial_t Y^2(X^2)-\partial_t Y^1(X^1)),
$$
$$\mathscr{T}_4=\nabla\cdot ((\partial_t X^1-\partial_t X^2)\otimes v^1),\quad \mathscr{T}_5=-(\nabla\cdot v^1)(\partial_t X^1-\partial_t X^2),\quad
\mathscr{T}_6= -\partial_t(\delta_{X^2}-\delta_{X^1})b_{X^1}^{-\top} v^1,
$$
$$
\mathscr{T}_7= -\partial_t\delta_{X^1}(b_{X^2}^{-\top}-b_{X^1}^{-\top}) v^1,
$$
where we have used 
$$
\nabla(\partial_t X^1-\partial_t X^2)v^1=\nabla\cdot ((\partial_t X^1-\partial_t X^2)\otimes v^1)-(\nabla\cdot v^1)(\partial_t X^1-\partial_t X^2).
$$
On the other hand, we have for the convection term 
\begin{equation*}
	\frac{1}{\rho^2}\mathbb{N}_{X^2}(\rho^2,v^2,\eta^2)-\frac{1}{\rho^1}\mathbb{N}_{X^1}(\rho^1,v^1,\eta^1)= \mathscr{C}_{v^2}(\overline{v})+ \mathscr{N}_1+\mathscr{N}_2+\mathscr{N}_3+\mathscr{N}_4+\mathscr{N}_5+\mathscr{N}_6+\mathscr{N}_7,
\end{equation*}
where $\mathscr{C}_{v^2}(\overline{v})$ corresponds to the converction term of the velocity $v^2$ that is given by
	\begin{equation}
		\label{1020}
		\mathscr{C}_{v^2}(\overline{v})=\nabla\cdot\left( (b_{X^2}^{-\top}v^2\otimes b_{X^2}^{-\top}\overline{v}) b_{X^2}\right)-(\nabla\cdot v^2)b_{X^2}^{-\top}(v^2-v^1),
\end{equation}
and the nonlinear terms are given by
$$
\mathscr{N}_1=\nabla\cdot [((b^{-\top}_{X^2}v^2-b^{-\top}_{X^1}v^1)\otimes b^{-\top}_{X^2}v^1)b_{X^2}],\quad \mathscr{N}_2=\nabla\cdot [(b^{-\top}_{X^1}v^1\otimes (b^{-\top}_{X^2}v^1-b^{-\top}_{X^1}v^1))b_{X^2}],
$$
$$\mathscr{N}_3=\nabla\cdot[ (b^{-\top}_{X^1}v^1\otimes b^{-\top}_{X^1}v^1)(b_{X^2}-b_{X^1})],\quad \mathscr{N}_4= -\nabla\cdot\left( b^{-\top}_{X^2}v^1 \otimes(v^2-v^1)\right),
$$
$$
\mathscr{N}_5=((v^2-v^1)\cdot \nabla) (b^{-\top}_{X^2}v^1),\quad \mathscr{N}_6=-(\nabla \cdot v^1)(b_{X^2}^{-\top}-b_{X^1}^{-\top})v^1,
$$
where we have used
$$
(\nabla\cdot (v^2-v^1) )b^{-\top}_{X^2}v^1=\nabla\cdot\left( b^{-\top}_{X^2}v^1 \otimes(v^2-v^1)\right) - ((v^2-v^1)\cdot \nabla) (b^{-\top}_{X^2}v^1).
$$
Now, we deal with the diffusion term
$$
\frac{1}{\rho^2}\nabla\cdot\mathbb{S}_{X^2}(v^2)-\frac{1}{\rho^1}\nabla\cdot\mathbb{S}_{X^1}(v^1)=\frac{1}{\rho^2}\nabla\cdot \mathbb{S}_{X^2}(\overline{v})+\frac{1}{\rho^2}\nabla\cdot \mathscr{S}_1+\mathscr{S}_2,
$$
where
$$
\mathscr{S}_2=-\frac{\overline{\rho}}{\rho^1\rho^2}\nabla\cdot\mathbb{S}_{X^1}(v^1),
$$
and
\begin{align*}
	\left[\mathscr{S}_1\right]_{ij}
	&=(\mathbb{S}_{X^2}(v^1))_{ij}-(\mathbb{S}_{X^1}(v^1))_{ij}
    \\
    &=\mu \bigg(
	\sum_{m,k}\left(\frac{\partial (b_{X^2}^{-\top})_{ik}}{\partial x_m}(X^2)
	-\frac{\partial (b_{X^1}^{-\top})_{ik}}{\partial x_m}(X^1)\right) (v^1)_k (b_{X^2})_{mj}
    \\
    & \ +\left(\frac{\partial (b_{X^2}^{-\top})_{mk}}{\partial x_i}(X^2) -\frac{\partial (b_{X^1}^{-\top})_{mk}}{\partial x_i}(X^1) 
	\right) (v^1)_k (b_{X^2})_{mj}
	\\
	& \ +
	\left(\frac{\partial (b_{X^1}^{-\top})_{ik}}{\partial x_m}(X^1)+\frac{\partial (b_{X^1}^{-\top})_{mk}}{\partial x_i}(X^1)\right)(v^1)_k\left( (b_{X^2})_{mj}-(b_{X^1})_{mj} \right)  
	\\
    & \ +
	\sum_{m,k,l}\bigg( 
	(b_{X^2}^{-\top})_{ik}(X^2)\frac{\partial (Y^2)_l}{\partial x_m}(X^2)-(b_{X^1}^{-\top})_{ik}(X^1)\frac{\partial (Y^1)_l}{\partial x_m}(X^1)
	\\ 
	& \ +(b_{X^2}^{-\top})_{mk}(X^2)\frac{\partial (Y^2)_l}{\partial x_i}(X^2)-(b_{X^1}^{-\top})_{mk}(X^1)\frac{\partial (Y^1)_l}{\partial x_i}(X^1)
	\bigg) \frac{\partial (v^1)_k}{\partial y_l}
	(b_{X^2})_{mj}
    \\
    & \ + \left((b_{X^1}^{-\top})_{ik}(X^1)\frac{\partial (Y^1)_l}{\partial x_m}(X^1)+ (b_{X^1}^{-\top})_{mk}(X^1)\frac{\partial (Y^1)_l}{\partial x_i}(X^1)\right)\frac{\partial (v^1)_k}{\partial y_l}
	\left( (b_{X^2})_{mj}-(b_{X^1})_{mj}\right)  \bigg)
	\\
	& \ +\lambda\nabla\cdot v^1\left( \frac{1}{\delta_{X^2}}-\frac{1}{\delta_{X^1}}\right) (b_{X^2})_{i,j}+\lambda\nabla\cdot v^1\frac{1}{\delta_{X^1}}\left( (b_{X^2})_{i,j}-(b_{X^1})_{ij}\right).
\end{align*}
We get for the pressure term
$$
\frac{1}{\rho^2}\nabla\cdot((\rho^2)^{\gamma}b_{X^2}^\top)-\frac{1}{\rho^1}\nabla\cdot((\rho^1)^{\gamma}b_{X^1}^\top)= \mathscr{P}_1+ \mathscr{P}_2+\mathscr{P}_3+\mathscr{P}_4,
$$
with
$$
\mathscr{P}_1= \nabla\cdot \left(\frac{1}{\rho^2}((\rho^2)^{\gamma}(b_{X^2}^\top-b_{X^1}^\top))\right),\quad \mathscr{P}_2= -((\rho^2)^{\gamma}(b_{X^2}^\top-b_{X^1}^\top))\nabla\left(\frac{1}{\rho^2}\right),
$$
$$
\mathscr{P}_3=\frac{1}{\rho^2}\nabla\cdot (((\rho^2)^\gamma-(\rho^1)^\gamma)b_{X^1}^{-\top}),\quad \mathscr{P}_4=-\frac{\overline{\rho}}{\rho^1\rho^2}\nabla\cdot ((\rho^1)^\gamma b_{X^1}^{-\top}),
$$
where we have used
$$
\frac{1}{\rho^2}\nabla\cdot((\rho^2)^{\gamma}(b_{X^2}^\top-b_{X^1}^\top))=\nabla\cdot \left(\frac{1}{\rho^2}((\rho^2)^{\gamma}(b_{X^2}^\top-b_{X^1}^\top))\right)-((\rho^2)^{\gamma}(b_{X^2}^\top-b_{X^1}^\top))\nabla\left(\frac{1}{\rho^2}\right) . 
$$
For the structure term, we obtain
$$
-(\mathbb{S}_{X^2}(v^2)-\mathbb{S}_{X^1}(v^1))+(\rho^2)^\gamma-(\rho^1)^{\gamma}=-(\mathbb{S}_{X^2}(\overline{v})+\mathscr{S}_1)N \cdot e_3+(\rho^2)^\gamma-(\rho^1)^{\gamma}.
$$
In the sequel, we will use the notations
$$
N^i=N_{\eta^i},\quad i=1,2.
$$
The boundary terms write
\begin{multline}
	\left[\frac{|N|}{|N^2|}b_{X^2}^{-1}\mathbb{D}_{X^2}(v^2)-\frac{|N|}{|N^1|}b_{X^1}^{-1}\mathbb{D}_{X^1}(v^1)\right]n\cdot\tau_j=\bigg[ \frac{|N|}{|N^2|}b_{X^2}^{-1}\mathbb{D}_{X^2}(\overline{v})n+\frac{|N|}{|N^2|}b_{X^2}^{-1}\left(\mathscr{S}_1\right)\bigg] \cdot \tau^j\\+[\mathscr{K}_1]_j+[\mathscr{K}_2]_j,
\end{multline}
where
$$
[\mathscr{K}_1]_j=\frac{|N|}{|N^2|}(b_{X^2}^{-1}-b_{X^1}^{-1} )\mathbb{D}_{X^1}(v^1)n\cdot\tau_j,\quad [\mathscr{K}_2]_j=|N|\left( \frac{1}{|N^2|}-\frac{1}{|N^1|}\right)b_{X^1}^{-1} \mathbb{D}_{X^1}(v^1)n\cdot\tau_j.
$$
We have also the friction term
$$
\beta_1 b_{X^2}^{-1}(b_{X^2}^{-\top}v^2-\partial_t\eta^2 e_3)- \beta_1 b_{X^1}^{-1}(b_{X^1}^{-\top}v^1-\partial_t\eta^1 e_3)  \cdot \tau^j= \beta_1 b_{X^2}^{-1}(b_{X^2}^{-\top}\overline{v}-\partial_t\overline{\eta} e_3)\cdot \tau^j+[\mathscr{K}_3]_j+[\mathscr{K}_4]_j,
$$
where
$$
[\mathscr{K}_3]_j=\beta_1(b^{-1}_{X^2}b^{-\top}_{X^2}-b^{-1}_{X^1}b^{-\top}_{X^1})v^1,\quad [\mathscr{K}_4]_j=-\beta_1(b^{-1}_{X^2}-b^{-1}_{X^1})\partial_t\eta^1e_3.
$$
\\
Then, the difference $(\overline{v},\overline{\eta})$ satisfies the following linear system
\begin{equation}
	\label{1336}
	\left\{
	\begin{array}{cc}
		\left( \mathbb{T}_{X^2}(\rho^2,\overline{v},\eta^2)+\rho^2\mathscr{C}_{v^2}(\overline{v})-\nabla\cdot (\mathbb{S}_{X^2}(\overline{v}))\right)  =\mathscr{F}_1+\nabla\cdot\mathscr{F}_2& \text{ in } (0,T)\times \Omega,  \vspace{0.15cm}\\
		\partial_{tt}\overline{\eta} + A_1 \overline{\eta} +A_2\partial_t\overline{\eta}= -(\mathbb{S}_{X^2}(\overline{v})) N\cdot e_3 -\mathscr{F}_2N\cdot e_3& \text{in } (0,T)\times\omega,
	\end{array}
	\right.
\end{equation}
and the boundary conditions
\begin{equation}
	\label{1335}
	\left\{
	\begin{array}{cc}
		(\overline{v})_n = 0,\   \left[2\mu\mathbb{D}(\overline{v})n +\beta_0 \overline{v}\right] \cdot \tau^j=0 &\text{ on } (0,T)\times\Gamma_{0},\\
		\left(\overline{v}-\partial_t \overline{\eta} e_3 \right)_{n}=0,\  \left[ 2\mu b_{X^2}^{-1}\frac{|N|}{|N^2|}\mathbb{D}_{X^2}(\overline{v})n+\mathscr{F}_2n+\beta_1 b_{X^2}^{-1}(b_{X^2}^{-\top}\overline{v}-\partial_t\overline{\eta} e_3) \right] \cdot \tau^j=[ \mathscr{K}]_j &\text{ on } (0,T)\times\Gamma (\eta^0),
	\end{array}
	\right.
\end{equation}
where
\begin{equation}
	\label{1443}
	\mathscr{F}_1=\mathscr{F}_{11}+\mathscr{F}_{12},
\end{equation}
with
\begin{equation}
	\label{1531}
	\mathscr{F}_{11}= -\rho^2 (\mathscr{T}_4+\mathscr{N}_1+\mathscr{N}_2+\mathscr{N}_3+\mathscr{N}_4+\mathscr{P}_1)=-\rho^2\nabla\cdot \mathscr{F},
\end{equation}
and that 
\begin{multline}
	\label{1806}
	\mathscr{F}=\Big[ ((b^{-\top}_{X^2}v^2-b^{-\top}_{X^1}v^1)\otimes b^{-\top}_{X^2}v^1)b_{X^2}+(b^{-\top}_{X^1}v^1\otimes (b^{-\top}_{X^2}v^1-b^{-\top}_{X^1}v^1))b_{X^2}\\+ (b^{-\top}_{X^1}v^1\otimes b^{-\top}_{X^1}v^1)(b_{X^2}-b_{X^1})+\left(\frac{1}{\rho^2}((\rho^2)^{\gamma}(b_{X^2}^\top-b_{X^1}^\top))\right)\\-\left( b^\top_{X^2}v^1 \otimes(v^2-v^1)\right)+((\partial_t X^1-\partial_t X^2)\otimes v^1)\Big],
\end{multline}
and 
\begin{align}
	\label{1239}
	\mathscr{F}_{12} & =-\rho^2(\mathscr{T}_1+\mathscr{T}_2+\mathscr{T}_3+\mathscr{T}_5+\mathscr{T}_6+\mathscr{T}_7+\mathscr{N}_5+\mathscr{N}_6-\mathscr{S}_2+\mathscr{P}_4+\mathscr{P}_2)
    \notag \\
    &=-\rho^2\Big[(\nabla X^2-\nabla X^1) \partial_t v^1+(\nabla X^2-\nabla X^1)\nabla v^1\partial_t Y^2 +\nabla X^1\nabla v^1(\partial_t Y^2-\partial_tY^1)
    \notag \\
	& \ -\partial_t(\delta_{X^2}-\delta_{X^1})b^{-\top}_{X^1} v^1-\partial_t\delta_{X^1}(b_{X^2}^{-\top}-b_{X^1}^{-\top})v^1
	-(\nabla\cdot v^1)(b^\top_{X^2}-b^\top_{X^1})v^1
    \notag\\
    &\ +(\nabla\cdot v^1)(b^\top_{X^1})(v^2-v^1) +\frac{\overline{\rho}}{\rho^1\rho^2}\nabla\cdot\left(\mathbb{S}_{X^1}(v^1)-(\rho^1)^\gamma b_{X^1}^\top\right)+((v^2-v^1)\cdot \nabla) (b^\top_{X^2}v^1)\notag\\
    & \ -(\nabla\cdot v^1)(\partial_t X^1-\partial_t X^2)+((\rho^2)^{\gamma}(b_{X^2}^\top-b_{X^1}^\top))\nabla\left( \frac{1}{\rho^2}\right)\Big].
\end{align}
The remaining terms in the fluid equation write as a divergence term
\begin{equation}
	\label{1235}
	\nabla\cdot\mathscr{F}_2=\nabla\cdot\mathscr{S}_1-\mathscr{P}_3=\nabla\cdot(\mathscr{S}_1+((\rho^1)^{\gamma}-(\rho^2)^{\gamma})b_{X^1}^\top)),
\end{equation}
with
\begin{equation}
	\label{1425}
	\mathscr{F}_2=\mathscr{S}_1+((\rho^1)^{\gamma}-(\rho^2)^{\gamma})b_{X^1}^\top).
\end{equation}
Finally, the boundary term is given by
\begin{multline}
	\label{2010}
	[\mathscr{K}]_j=-[\mathscr{K}_1]_j-[\mathscr{K}_2]_j-[\mathscr{K}_3]_j-[\mathscr{K}_4]_j\\=-\bigg(\frac{|N|}{|N^2|}(b_{X^2}^{-1}-b_{X^1}^{-1} )\mathbb{D}_{X^1}(v^1)n+|N|\left( \frac{1}{|N^2|}-\frac{1}{|N^1|}\right)b_{X^1}^{-1} \mathbb{D}_{X^1}(v^1)n\\+\beta_1(b^{-1}_{X^2}b^{-\top}_{X^2}-b^{-1}_{X^1}b^{-\top}_{X^1})v^1+\beta_1(b^{-1}_{X^2}-b^{-1}_{X^1})(v^1-\partial_t\eta^1e_3)\bigg) \cdot \tau^j, \ \ \ j=1,2.
\end{multline}

\section{Weak regularity estimates}\label{Apndx-Weak-regu}
In this section, we derive an energy estimate for the system \eqref{1336-1}--\eqref{1335-1}. After an appropriate change of variables, the system can be reformulated as a linearly coupled problem posed on a moving domain with prescribed velocity $v^\star$. Using classical arguments, including the Reynolds transport theorem, we first obtain a weak estimate in this moving framework. We then return to fixed coordinates to recover the desired estimate in a stationary configuration.

\vspace*{0.5cm}
\noindent
We recall that the definition of $\mathbb{T}_{X_{\eta^\star}}$, $\mathbb{S}_{X_{\eta^\star}}$, $\mathscr{C}_{v^\star}$ are given in \eqref{defT}, \eqref{defS} and \eqref{1020} respectively.
\begin{Lemma}
	\label{1821}
	Let 
	$$f\in L^2(0,T;[(H^1(\Omega))']^3),\quad F\in L^2(0,T;[L^2(\Omega)]^9),\quad k\in L^2(0,T;[H^{-1/2}(\Gamma(\eta^0)))]^3).$$
	Suppose that 	
	$$
	\eta^\star\in L^p(0,T;W^{4,q}(\omega))\cap W^{2,p}(0,T;L^q(\omega)),\quad \rho^\star \in L^\infty(0,T;W^{1,q}(\Omega))\cap W^{1,p}(0,T;L^q(\Omega)),
	$$
	$$
	v^\star\in L^p(0,T;[W^{2,q}(\Omega)]^3)\cap W^{1,p}(0,T;[L^q(\Omega)]^3),
	$$
	with $p>2$ and $q>3$. Consider $(v,\eta)$ the solution of the linear system 
 \begin{equation}
		\label{1028}
		\left\{
		\begin{array}{cc}
		\partial_t\rho^\star+\frac{1}{\delta_{X^\star}}(\partial_tX^\star-v^\star)\nabla\rho^\star=-\frac{\rho^\star}{\delta_{X^\star}}\nabla\cdot v^\star& \text{ in } (0,T)\times \Omega,\\
			\mathbb{T}_{X_{\eta^\star}}(\rho^\star, v,\eta^\star)+\rho^\star\mathscr{C}_{v^\star}(v)-\nabla\cdot (\mathbb{S}_{X_{\eta^\star}}(v)) =f+\nabla\cdot F& \text{ in } (0,T)\times \Omega,  \vspace{0.15cm}\\
			\partial_{tt}\eta + A_1 \eta +A_2\partial_t\eta=-(\mathbb{S}_{X_{\eta^\star}}(v)) N\cdot e_3 - FN\cdot e_3& \text{in } (0,T)\times\omega,
		\end{array}
		\right.
	\end{equation}
	with the boundary conditions
	\begin{equation}
		\label{1021}
		\left\{
        \begin{array}{cc}
		v_n = 0,\quad \left[2\mu\mathbb{D}(v)n +\beta_0 v\right]_\tau=0 &\text{ on } (0,T)\times\Gamma_{0},\vspace{0.15cm}\\ 
			\left( v-\partial_t \eta e_3 \right)_{n}=0, &\text{ on } (0,T)\times\Gamma (\eta^0),\\ \left[ b_{X_{\eta^\star}}^{-1}\frac{|N|}{|N_{\eta^\star}|}\left( 2\mu\mathbb{D}_{X_{\eta^\star}}( v)+F\right) n+\beta_1 b_{X_{\eta^\star}}^{-1}(b_{X_{\eta^\star}}^{-\top} v-\partial_t\eta e_3) \right]_\tau = k&\text{ on } (0,T)\times\Gamma (\eta^0),
		\end{array}
		\right.
	\end{equation}
	and zero initial conditions.  Then, we have the following estimate
	\begin{multline}
		\label{1022}
		\|v\|_{L^2(0,T;[H^1(\Omega)]^3)}+ \|v\|_{L^\infty(0,T;[L^2(\Omega)^3])}+ \|\eta\|_{L^\infty(0,T;H^2(\omega))}+\|\partial_t\eta\|_{L^\infty(0,T;L^2(\omega))}\\+\|\partial_t\eta\|_{L^2(0,T;H^1(\omega))} \leq C\bigg(  \|f\|_{L^2(0,T;[(H^1(\Omega))']^3)}+ \|F\|_{L^2(0,T;[L^2(\Omega)]^9)} + \|k\|_{L^2(0,T;[W^{-1/2,2}(\Gamma(\eta^0))]^3)}\bigg),
	\end{multline}
	with  $C>0$, a constant independent of T and dependent only on $\|\eta^\star\|_{L^\infty(0,T;W^{3,q}(\omega))}$.
\end{Lemma}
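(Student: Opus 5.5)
The plan is to undo the geometric change of variables, derive the estimate as an energy identity on the moving domain, and transport it back. Let $X^\star=X_{\eta^\star}$ with inverse $Y^\star$, and define on $\Omega(\eta^\star(t))$
\[
\widetilde v(t,x)=b_{X^\star}^{-\top}v(t,Y^\star(t,x)),\qquad \widetilde\rho^\star(t,x)=\rho^\star(t,Y^\star(t,x)),\qquad \widetilde v^\star=b_{X^\star}^{-\top}v^\star(Y^\star).
\]
By \eqref{15:39}--\eqref{15:40}, read in reverse, the momentum equation in \eqref{1028} becomes
\[
\widetilde\rho^\star\bigl(\partial_t\widetilde v+(\widetilde v^\star\cdot\nabla)\widetilde v+\text{lower order terms}\bigr)-\nabla\cdot\mathbb{S}(\widetilde v)=\widetilde f+\nabla\cdot\widetilde F .
\]
Here $\widetilde f$ and $\widetilde F$ are the push-forwards of $f$ and $F$ with the factor $\delta_{X^\star}^{-1}$. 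The first line of \eqref{1028} says that $\widetilde\rho^\star$ solves the continuity equation $\partial_t\widetilde\rho^\star+\nabla\cdot(\widetilde\rho^\star\widetilde v^\star)=0$. The boundary conditions \eqref{1021} turn into genuine Navier-slip conditions on $\partial\Omega(\eta^\star)$:
\begin{itemize}
\item $(\widetilde v-\partial_t\eta e_3)\cdot n_{\eta^\star}=0$;
\item $[2\mu\mathbb{D}(\widetilde v)n_{\eta^\star}+\widetilde F n_{\eta^\star}+\beta_1(\widetilde v-\partial_t\eta e_3)]_{\tau}=\widetilde k$, where $\widetilde k$ is obtained from $k$ by the relations $N_\eta=b_XN$ and $\tau_\eta=b_X^{-\top}\tau$;
\item the plate force is $-\sqrt{1+|\nabla_s\eta^\star|^2}\,(\mathbb{S}(\widetilde v)+\widetilde F)n_{\eta^\star}\cdot e_3$.
\end{itemize}
All coefficients are controlled in $L^\infty(0,T;W^{1,q})$, or in $W^{2,q}$ for the geometry, by $\|\eta^\star\|_{L^\infty(W^{3,q})}$ and the norms of $\rho^\star,v^\star$. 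By \eqref{1327}, $X^\star$ and $Y^\star$ are bi-Lipschitz, so $H^1$ and $L^2$ norms, $(H^1)'$ duals and $H^{-1/2}$ traces are equivalent under the transformation.

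Next, I test the moving-domain momentum equation with $\widetilde v$ and the plate equation with $\partial_t\eta$, then add. The time-derivative and convective terms are handled by Reynolds' transport theorem together with the continuity equation for $\widetilde\rho^\star$:
\[
\int\widetilde\rho^\star(\partial_t\widetilde v+\widetilde v^\star\cdot\nabla\widetilde v)\cdot\widetilde v=\frac12\frac{d}{dt}\int_{\Omega(\eta^\star)}\widetilde\rho^\star|\widetilde v|^2 .
\]
This uses that the domain moves with normal velocity $\partial_t\eta^\star e_3\cdot n=\widetilde v^\star\cdot n$, which holds because $v^\star$ satisfies the impermeability condition; I assume this as for the solutions considered. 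Integrating the viscous term by parts produces
\begin{itemize}
\item $2\mu\|\mathbb{D}(\widetilde v)\|^2+\lambda\|\nabla\cdot\widetilde v\|^2$;
\item the friction terms $\beta_0\|\widetilde v_\tau\|^2_{\Gamma_0}$ and $\beta_1\|(\widetilde v-\partial_t\eta e_3)_\tau\|^2$;
\item a boundary term $-\int\partial_t\eta\,(\mathbb{S}(\widetilde v)+\widetilde F)n\cdot e_3$, which cancels exactly against the plate forcing.
\end{itemize}
This cancellation is the reason the stress appears in conservative form with $F$ on both sides. The plate side gives
\[
\frac12\frac{d}{dt}\bigl(\|\partial_t\eta\|^2+\|\Delta\eta\|^2\bigr)+\|\nabla\partial_t\eta\|^2 ,
\]
using the clamped conditions.

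The right-hand side is bounded by
\[
\|\widetilde f\|_{(H^1)'}\|\widetilde v\|_{H^1}+\|\widetilde F\|_{L^2}\|\nabla\widetilde v\|_{L^2}+\|\widetilde k\|_{H^{-1/2}}\|\widetilde v_\tau\|_{H^{1/2}} .
\]
The lower-order terms (the $\partial_t b^{-\top}$ pieces and the second part of $\mathscr{C}_{v^\star}$) are bounded by $C(\|\nabla v^\star\|_{L^\infty}+\|\partial_t\eta^\star\|_{W^{1,q}})\|\widetilde v\|^2_{L^2}$, with time weights that are integrable.

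The main obstacle is coercivity: controlling $\|\widetilde v\|_{H^1}$ by $\|\mathbb{D}(\widetilde v)\|_{L^2}$ in a way that is uniform in time. For this I use a Korn inequality with slip conditions on the moving domains $\Omega(\eta^\star(t))$, in the form $\|u\|_{H^1}\le C(\|\mathbb{D}u\|_{L^2}+\|u\|_{L^2})$. Its constant must be uniform for domains that are $W^{3,q}$-small perturbations of $\Omega$, which follows by pulling back to $\Omega$ with $X^\star$; this is where the dependence of $C$ on $\|\eta^\star\|_{L^\infty(W^{3,q})}$ enters. I also need a uniform lower bound on $\widetilde\rho^\star$, which follows from $\rho^\star\in C([0,T];W^{1,q})$ and positivity. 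Since $\widetilde v$ is not zero on the boundary, the $\|u\|_{L^2}$ term cannot be dropped, so it goes to the Gronwall term. Young's inequality absorbs the $H^1$ factors, and Gronwall's lemma with zero data gives \eqref{1022} on the moving domain. The Gronwall factor is $\exp\bigl(C\int_0^T(1+\|\nabla v^\star\|_{L^\infty}+\dots)\bigr)$, which for $T\le1$ is bounded by a constant independent of $T$. Pulling back through $X^\star$ then gives the estimate for $v$ on $\Omega$.
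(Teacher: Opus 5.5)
Your proposal is correct and follows the same route as the paper: pull the system back to $\Omega(\eta^\star(t))$, test with $(w,\partial_t\eta)$, remove the transport terms by Reynolds' theorem together with the continuity equation for $\rho^\star$, let the interface stress term cancel against the plate forcing, and transport the resulting energy bound back to $\Omega$ with $X^\star$. The only difference is the coercivity step: the paper cites a coercivity lemma for the Lam\'e operator with Navier conditions on the $C^{1,1}$ domains $\Omega(\eta^\star(t))$, whereas you use Korn's second inequality and absorb the $L^2$ remainder by a Gr\"{o}nwall argument, and you also make explicit the impermeability of $v^\star$ and the dependence of $C$ on $\rho^\star$ and $v^\star$, which the paper leaves implicit.
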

\begin{proof}	
	We note that the system \eqref{1028}--\eqref{1021} is equivalent to the linear problem written in the moving domain $\Omega(\eta^\star)$ using the change of coordinates $(X^\star)^{-1}=X^{-1}_{\eta^\star}$:
	\begin{equation}
		\label{1711}
		\left\{
		\begin{array}{cc}
			\partial_t\rho^\star+\nabla\cdot (\rho^\star v^\star)=0& \text{ in } (0,T)\times \Omega(\eta^\star),  \vspace{0.15cm}\\
			\rho^\star(\partial_t w+(v^\star\cdot \nabla )w)-\nabla\cdot\mathbb{S}(w)=f^\star+\nabla\cdot F^\star& \text{ in } (0,T)\times \Omega(\eta^\star),  \vspace{0.15cm}\\
			\partial_{tt}\eta + A_1 \eta +A_2\partial_t\eta=-(\mathbb{S}(w)) N_{\eta^\star}\cdot e_3 - F^\star N_{\eta^\star}\cdot e_3& \text{in } (0,T)\times\omega,
		\end{array}
		\right.
	\end{equation}
	with zero initial conditions and the  boundary conditions
	\begin{equation}
		\label{1712}
		\left\{
		\begin{array}{cc}
		w_{n_{\eta^\star}} = 0,\quad \quad \left[2\mu\mathbb{D}(w)n_{\eta^\star} +F^\star n_{\eta^\star}+\beta_0 w\right]_{\tau_{\eta^\star}}=0 &\text{ on } (0,T)\times\Gamma_{0},  \vspace{0.15cm}\\
			\left( w-\partial_t \eta e_3 \right)_{n_{\eta^\star}}=0& \text{ on } (0,T)\times\Gamma (\eta^\star),  \vspace{0.15cm}\\ \left[2\mu \mathbb{D}(w)n_{\eta^\star}+F^\star n_{\eta^\star}+\beta_1 (w-\partial_t\eta e_3) \right]_{\tau_{\eta^\star}} = k^\star & \text{ on } (0,T)\times\Gamma (\eta^\star),
		\end{array}
		\right.
	\end{equation}
where
	$$
	 w=b_{X_\eta^\star}^{-\top} v((X^\star)^{-\top}(\cdot)),\quad f^\star=\frac{1}{\delta_{X^\star}}f((X^{\star})^{-1}(\cdot)),\quad F^\star=F((X^{\star})^{-1}(\cdot))b_{X^\star}^{-1},\quad k^\star=b_{X^\star}k((X^{\star})^{-1}(\cdot)).
	$$
 We multiply the system by $(w,\partial_t\eta)$ and using Reynolds transport formula, we see that the convective term disappears. In the other hand, since $\eta^\star\in L^\infty(0,T;W^{3,q}(\omega))$, the domain $\Omega(\eta^\star(t))$ is in particular $C^{1,1}$, we can then use the coercivity property of the Lamé operator with Navier boundary conditions that is proved in \cite[Lemma 2.3]{zbMATH05054501}, we obtain
\begin{multline}
\frac{1}{2}\|\sqrt{\rho(t)}w(t)\|_{[L^2(\Omega(\eta^\star(t))]^3)}^2 + \|w\|^2_{L^2(0,t;[H^1(\Omega(\eta^\star))]^3)}+\|\nabla\partial_t\eta\|^2_{L^2(0,t;[L^2(\omega)]^2)}+\|\Delta\eta(t)\|^2_{L^2(\omega)} +\|\partial_t\eta(t)\|^2_{L^2(\omega)} \\
\leq C\bigg(  \|f^\star\|^2_{L^2(0,t;[(H^1(\Omega(\eta^\star)))']^3)}+ \|F^\star\|^2_{L^2(0,t;[L^2(\Omega(\eta^\star))]^9)} + \|k^\star\|^2_{L^2(0,t;[W^{-1/2,2}(\Gamma(\eta^\star))]^3)}\bigg).
\end{multline}
Using the change of variables $X^\star$, then estimate \eqref{1022} follows.
\end{proof}


\end{document}